\newtheorem{mainthm}{Theorem}
\newtheorem{theorem}{Theorem}[section]
\newtheorem*{theorem*}{Theorem}
\newtheorem{corollary}[theorem]{Corollary}
\newtheorem{lemma}[theorem]{Lemma}
\newtheorem{proposition}[theorem]{Proposition}
\newtheorem*{proposition*}{Proposition}
\newtheorem*{conjecture*}{Conjecture}
\theoremstyle{definition}
\newtheorem{remark}[theorem]{Remark}
\numberwithin{equation}{section}
\def\bR {\mathbb{R}}
\def\cA {\mathcal{A}}
\def\cE {\mathcal{E}}
\def\cF {\mathcal{F}}
\def\cY {\mathcal{Y}}
\def\cZ {\mathcal{Z}}
\def\scrL{\mathscr{L}}
\def\grad {{\nabla}}
\def\rstr {{\big |}}
\def\la {\langle}
\def\ra {\rangle}
\newcommand{\tx}[1]{\mathrm{#1}}
\newcommand{\wt}[1]{\widetilde{#1}}
\newcommand{\bs}[1]{\boldsymbol{#1}}
\newcommand{\supp}{\operatorname{supp}}
\newcommand{\Id}{\operatorname{Id}}
\newcommand{\eee}{\mathrm e}
\newcommand{\ud}{\mathrm{\,d}}
\newcommand{\vd}{\mathrm{d}}
\newcommand{\vD}{\mathrm{D}}
\newcommand{\dd}[1]{{\frac{\vd}{\vd{#1}}}}
\newcommand{\enorm}{{\dot H^1 \times L^2}}
\newcommand{\uln}[1]{{\underline{ #1 }}}
\newcommand{\yp}{{\cY}^+}
\newcommand{\ym}{{\cY}^-}
\newcommand{\ypl}{{\cY}^+_{\lambda}}
\newcommand{\yml}{{\cY}^-_{\lambda}}
\title{Bounds on the speed of type II blow-up for the energy critical wave equation in the radial case}
\author{Jacek Jendrej}
\address{\'Ecole Polytechnique, CMLS, 91128 Palaiseau, France}
\email{jacek.jendrej@polytechnique.edu}
\begin{document}

\maketitle
\begin{abstract}
We consider the focusing energy-critical wave equation in space dimension $N \in\{3, 4, 5\}$ for radial data.
We study type II blow-up solutions which concentrate one bubble of energy.
It is known that such solutions decompose in the energy space as a sum of the bubble and an asymptotic profile.
We prove bounds on the blow-up speed in the case when the asymptotic profile is sufficiently regular.
These bounds are optimal in dimension $N = 5$.
We also prove that if the asymptotic profile is sufficiently regular, then it cannot be strictly negative at the origin.
\end{abstract}
\section{Introduction}
\label{sec:intro}
\subsection{Setting of the problem}
Let $N \in \{3, 4, 5\}$ be the dimension of the space. For $\bs u_0 = (u_0, \dot u_0) \in \cE := \dot H^1(\bR^N) \times L^2(\bR^N)$, define the \emph{energy functional}
\begin{equation*}
  E(\bs u_0) = \int\frac 12|\dot u_0|^2 + \frac 12|\grad u_0|^2 - F(u_0)\ud x,
\end{equation*}
where $F(u_0) := \frac{N-2}{2N}|u_0|^\frac{2N}{N-2}$. Note that $E(\bs u_0)$ is well-defined due to the Sobolev Embedding Theorem.
The differential of $E$ is $\vD E(\bs u_0) = (-\Delta u_0 - f(u_0), \dot u_0)$, where $f(u_0) = |u_0|^\frac{4}{N-2}u_0$.

We consider the Cauchy problem for the energy critical wave equation:
\begin{empheq}{equation}
  \label{eq:nlw}
\bigg\{
  \begin{aligned}
  \partial_t \bs u(t) &= J\circ \vD E(\bs u(t)), \\
  \bs u(t_0) &= \bs u_0 \in \cE.
\end{aligned}
\tag{NLW}
\end{empheq}
Here, $J := \begin{pmatrix}0 & \Id \\ -\Id & 0 \end{pmatrix}$ is the natural symplectic structure. This equation is often written in the form
  \begin{equation*}
    \partial_{tt} u = \Delta u + f(u).
  \end{equation*}

  Equation \eqref{eq:nlw} is locally well-posed in the space $\cE$, see for example \cite{GSV92} and \cite{ShSt94} (the defocusing case),
  as well as a complete review of the Cauchy theory in \cite{KeMe08}.
In particular, for any initial data $\bs u_0 \in \cE$ there exists a maximal time of existence $(T_-, T_+)$, $-\infty \leq T_- < t_0 < T_+ \leq +\infty$,
and a unique solution $\bs u \in C((T_-, T_+); \cE)$. In addition, the energy $E$ is a conservation law.
In this paper we always assume that the initial data is radially symmetric. This symmetry is preserved by the flow.

For functions $v \in \dot H^1$, $\dot v \in L^2$, $\bs v = (v, \dot v)\in \cE$ and $\lambda > 0$, we denote
\begin{equation*}
  v_\lambda(x) := \frac{1}{\lambda^{(N-2)/2}} v\big(\frac{x}{\lambda}\big), \qquad \dot v_\uln\lambda(x) := \frac{1}{\lambda^{N/2}} \dot v\big(\frac{x}{\lambda}\big),\qquad\bs v_\lambda(x) := \big(v_\lambda, \dot v_\uln\lambda\big).
\end{equation*}

A change of variables shows that
\begin{equation*}
  E\big((\bs u_0)_\lambda\big) = E(\bs u_0).
\end{equation*}
Equation~\eqref{eq:nlw} is invariant under the same scaling. If $\bs u = (u, \dot u)$ is a solution of \eqref{eq:nlw} and $\lambda > 0$, then
$
t \mapsto \bs u\big((t-t_0)/\lambda\big)_\lambda
$ is also a solution
with initial data $(\bs u_0)_\lambda$ at time $t = 0$.
This is why equation~\eqref{eq:nlw} is called \emph{energy-critical}.

A fundamental object in the study of \eqref{eq:nlw} is the family of stationary solutions $(u, \partial_t u) = \pm\bs W_\lambda = (\pm W_\lambda, 0)$, where
\begin{equation*}
  W(x) = \Big(1 + \frac{|x|^2}{N(N-2)}\Big)^{-(N-2)/2}.
\end{equation*}
The functions $W_\lambda$ are called \emph{ground states}. 

In general the energy $E$ does not control the norm $\|\cdot\|_\cE$, and indeed this norm can tend to $+\infty$ in finite time, which is referred to as \emph{type I blow-up}.
In odd space dimensions and for superconformal nonlinearities (which includes the energy-critical case)
Donninger and Sch\"orkhuber \cite{DoSc14}, \cite{DoSc15p} described large sets of initial data leading to this kind of blow-up.

It can also happen that in finite time the solution leaves every compact set of $\cE$, the norm $\|\cdot\|_\cE$ staying bounded, which is referred to as \emph{type II blow-up}.
In dimension $N = 3$ in the radial case one of the consequences of the classification result of Duyckaerts, Kenig and Merle \cite{DKM4} is that any blow-up solution
is either of type I or of type II. This is unknown in other cases.

A particular type of type II blow-up occurs when the solution $\bs u(t)$ stays close to the family of ground states $\bs W_\lambda$ and $\lambda \to 0$.
In this situation we call $\bs W_\lambda$ the \emph{bubble of energy} and we say that $\bs u(t)$ blows up by concentration of one bubble of energy.
We have the following fundamental result proved first by Duyckaerts, Kenig and Merle \cite{DKM1} for $N = 3$,
by the same authors \cite{DKM2} for $N = 5$ and by C\^ote, Kenig, Lawrie and Schlag \cite{CKLS14p} for $N = 4$:
\begin{theorem*}[\cite{DKM1}, \cite{DKM2}, \cite{CKLS14p}]
  Let $\bs u(t)$ be a radial solution of \eqref{eq:nlw} which blows up at $t = T_+$
  by concentration of one bubble of energy. Then there exist $\bs u^*_0 \in \cE$ and $\lambda \in C([t_0, T_+), (0, +\infty))$
    such that
    \begin{equation}
      \label{eq:DKM1}
      \lim_{t\to T_+}\|\bs u(t) - \bs W_{\lambda(t)} - \bs u^*_0\|_\cE = 0,\qquad \lim_{t \to T_+} (T_+ - t)^{-1}\lambda(t) = 0.
    \end{equation} \qed
\end{theorem*}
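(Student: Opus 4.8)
The plan is to combine modulation theory near the ground state, a virial-type monotonicity estimate that controls the scaling parameter, and finite speed of propagation together with a compactness argument to produce the asymptotic profile $\bs u_0^*$. First I would set up a modulated decomposition: since $\bs u(t)$ stays close to the family $\{\iota\bs W_\mu:\iota\in\{-1,1\},\ \mu>0\}$ as $t\to T_+$, the implicit function theorem yields $C^1$ functions $\iota(t)\in\{-1,1\}$ and $\lambda(t)>0$ with
\begin{equation*}
  \bs u(t)=\iota(t)\,\bs W_{\lambda(t)}+\bs g(t),
\end{equation*}
where $\bs g(t)$ satisfies one orthogonality condition fixing $\lambda(t)$ and $\|\bs g(t)\|_\cE$ is bounded by the (small) closeness parameter; continuity makes $\iota(t)$ locally constant, so after shrinking the interval we take $\iota\equiv1$, and the blow-up forces $\lambda(t)\to0$. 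I would then derive the modulation equations: differentiating the orthogonality relation and projecting \eqref{eq:nlw} onto the scaling direction gives $|\lambda'(t)|\lesssim\|\bs g(t)\|_\cE/\lambda(t)$, while analysing the equation for $\bs g$ via the linearized operator $-\Delta-f'(W)$ and using coercivity of the linearized energy on the orthogonal complement of its kernel, after an explicit correction of the single unstable direction, produces a Lyapunov functional $\mathcal I(t)$ with $\frac{\vd}{\vd t}\mathcal I(t)\gtrsim\|\bs g(t)\|_\cE^2/\lambda(t)$ modulo controlled errors; in particular $\int^{T_+}\|\bs g(t)\|_\cE^2\,\lambda(t)^{-1}\,\vd t<\infty$. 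The slowly decaying resonance of the linearized operator in dimensions $N=3,4$ has to be accommodated at this point; in $N=5$ this complication is absent.

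The heart of the matter — and the step I expect to be the main obstacle — is ruling out a self-similar rate, i.e. proving $\lambda(t)=o(T_+-t)$. I would argue by contradiction: if $\limsup_{t\to T_+}(T_+-t)^{-1}\lambda(t)>0$, then combining the monotonicity of $\mathcal I$ with conservation of $E$ and with finiteness of the energy flux of $\bs u$ through the backward light cone with vertex $(T_+,0)$ should give a contradiction, because a scale comparable to $T_+-t$ on a set of times of positive logarithmic measure forces radiation of a fixed amount of energy on each dyadic time interval, incompatible with $E(\bs u)<\infty$. Oscillation of $(T_+-t)^{-1}\lambda(t)$ between $0$ and a positive value is excluded by the same flux bookkeeping together with $|\lambda'(t)|\lesssim\|\bs g(t)\|_\cE/\lambda(t)$ and the integrability just obtained. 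This is where the structure of the energy-critical wave flow enters decisively, through channel-of-energy (exterior energy) estimates or a sufficiently sharp virial identity, and where the three dimensions genuinely require separate treatment.

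Finally I would extract $\bs u_0^*$. For fixed small $\delta>0$, finite speed of propagation shows that the restriction of $\bs u(t)$ to $\{|x|\ge\delta\}$ agrees with the restriction of a solution whose $\cE$-norm stays bounded and which does not concentrate there, hence converges in $\cE(\{|x|\ge\delta\})$ as $t\to T_+$; since $\bs W_{\lambda(t)}\to0$ away from the origin, these limits are compatible as $\delta\downarrow0$ and define a unique $\bs u_0^*\in\cE$ with $\bs u(t)-\bs W_{\lambda(t)}\wto\bs u_0^*$ weakly in $\cE$. To upgrade this to strong convergence in $\cE$ I would argue by contradiction using the linear profile decomposition of $\bs u(t_n)-\bs W_{\lambda(t_n)}-\bs u_0^*$ along a sequence $t_n\to T_+$: a nontrivial defect of compactness would, through the nonlinear profile construction, yield a second concentrating bubble or nontrivial radiation feeding the self-similar region, contradicting concentration of exactly one bubble; hence $\|\bs u(t)-\bs W_{\lambda(t)}-\bs u_0^*\|_\cE\to0$. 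Together with the preceding step, both limits in \eqref{eq:DKM1} follow.
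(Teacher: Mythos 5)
This theorem is not proved in the present paper: the bracketed attribution and the immediate \qed show it is quoted from \cite{DKM1}, \cite{DKM2}, \cite{CKLS14p}. There is therefore no internal proof to compare against; the comparison must be with those external works.

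Two things in your sketch would fail as written. The modulation estimate should be $|\lambda'(t)|\lesssim\|\bs g(t)\|_\cE$, not $\|\bs g(t)\|_\cE/\lambda(t)$ (cf.\ \eqref{eq:loi-mod}); the factor $1/\lambda$ is spurious. More seriously, no Lyapunov functional with $\mathcal I'(t)\gtrsim\|\bs g(t)\|_\cE^2/\lambda(t)$ is available from coercivity of the linearized energy: the quadratic form $\tfrac12\langle\vD^2E(\bs W_\lambda)\bs g,\bs g\rangle$ is coercive only modulo two bad directions, and along the flow it is \emph{conserved} to leading order (it is the second-order term in the expansion of the conserved energy), not monotone. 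If $\int^{T_+}\|\bs g\|_\cE^2/\lambda\,\vd t<\infty$ held, the bounds of Theorem~1 of the present paper would follow almost for free; the whole difficulty here is that nothing that strong is true, and the paper has to work to get even $\lambda(t)\lesssim(T_+-t)^{4/(6-N)}$ under extra regularity on $\bs u^*_0$. The cited proofs do not run on modulation plus a monotonicity formula. They use profile decomposition along sequences $t_n\to T_+$, the exterior-energy (channel-of-energy) inequality for the linear wave to show that no energy can persist in the self-similar region and to exclude $\lambda(t_n)\sim T_+-t_n$, and finite speed of propagation to extract $\bs u^*_0$ as a limit in $\{|x|>T_+-t\}$; the upgrade from sequential to continuous time is again driven by exterior energy, not by a Lyapunov argument. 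Your last two paragraphs (regular-part extraction and profile-decomposition upgrade) are close in spirit to the actual proof; the modulation/virial machinery in the middle is not, and it is precisely where your sketch rests on a claim that is false.
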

In this context the function $\bs u^*_0$ is called the \emph{asymptotic profile}.
Note that in \cite{DKM2} a more general, non-radial version of the above theorem was proved for $N \in \{3, 5\}$.

Solutions verifying \eqref{eq:DKM1} were first constructed in dimension $N = 3$ by Krieger, Schlag and Tataru \cite{KrScTa09}, who obtained all possible
polynomial blow-up rates $\lambda(t) \sim (T_+ -t)^{1+\nu}$, $\nu > 0$.
For $N = 4$ smooth solutions blowing up at a particular rate were constructed by Hillairet and Rapha\"el \cite{HiRa12}.
For $N = 5$ the author proved in \cite{moi15p} that for any radially symmetric asymptotic profile $\bs u^*_0 \in H^4 \times H^3$
such that $u^*_0(0) > 0$, there exists a solution $\bs u(t)$ such that \eqref{eq:DKM1} holds.
For these solutions the concentration speed of the bubble is
\begin{equation}
  \label{eq:con-speed}
  \lambda(t) \sim u^*_0(0)^2 (T_+ - t)^4.
\end{equation}
In the same article, solutions with blow-up rate $(T_+ - t)^{1+\nu}$ for $\nu > 8$ were constructed,
with $\nu$ explicitely related to the asymptotic behaviour of $\bs u^*_0$ at $x = 0$.
\subsection{Statement of the results}
In the present paper we continue the investigation of the relationship between the behaviour of $\bs u^*_0$ at $x = 0$
and possible blow-up speeds, still in the special case when the asymptotic profile $\bs u^*_0$ is sufficiently regular.
We prove the following result.
\begin{mainthm}
  \label{thm:loi}
 Let $N \in \{3, 4, 5\}$ and $s > \frac{N-2}{2}$, $s \geq 1$. Let $\bs u_0^* = (u_0^*, \dot u_0^*) \in H^{s+1} \times H^s$ be a radial function.
  Suppose that $\bs u$ is a radial solution of \eqref{eq:nlw} such that
  \begin{equation}
    \label{eq:loi}
    \lim_{t \to T_+}\|\bs u(t) - \bs W_{\lambda(t)} - \bs u_0^*\|_\cE = 0, \qquad \lim_{t \to T_+} \lambda(t) = 0,\qquad T_+ < +\infty.
  \end{equation}
 There exists a constant $C > 0$ depending on $\bs u^*_0$ such that:
  \begin{itemize}
    \item if $N \in \{4, 5\}$, then for $T_+ - t$ sufficiently small there holds
      \begin{equation}
        \label{eq:loi-borne}
        \lambda(t) \leq C (T_+ - t)^\frac{4}{6-N}.
      \end{equation}
  \item if $N = 3$, then there exists a sequence $t_n \to T_+$ such that
      \begin{equation}
        \label{eq:loi-borne-N3}
        \lambda(t_n) \leq C (T_+ - t_n)^\frac{4}{6-N}.
      \end{equation}
  \end{itemize}
\end{mainthm}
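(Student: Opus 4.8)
The plan is to extract the relationship between the concentration rate $\lambda(t)$ and the blow-up time through a modulation-type analysis, coupled with the dispersive structure of the wave equation inherited from the asymptotic profile. Write $\bs u(t) = \bs W_{\lambda(t)} + \bs g(t)$, where the error $\bs g(t)$ is the remainder relative to the bubble. From \eqref{eq:loi} we know $\bs g(t) \to \bs u_0^*$ in $\cE$ as $t \to T_+$. First I would establish precise coercivity and orthogonality: choose $\lambda(t)$ by imposing an orthogonality condition on $\bs g(t)$ (for instance, orthogonality of $g(t)$ to $\Lambda W = \dd{\lambda} W_\lambda\rstr_{\lambda = 1}$ in a suitable pairing), so that $\lambda$ inherits the regularity of $\bs u(t)$ near the scale of the bubble and is well-defined. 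The key quantity to track will be something like $b(t) := -\lambda'(t)$ together with the "first moment" of $g$ against the generalized kernel of the linearized operator $-\Delta - f'(W)$; the virial/modulation identities will relate $b$, $\lambda$, and the behaviour of $u_0^*$ near the origin.

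The central mechanism is the following: since $\bs u_0^* \in H^{s+1} \times H^s$ with $s > \tfrac{N-2}{2}$, it is continuous at the origin (Sobolev embedding), so the value $u_0^*(0)$ is defined and the solution behaves, near $x = 0$ on scales $\sim \lambda$, like a rescaled bubble sitting on a nearly constant background of height $\approx u_0^*(0)$. Plugging the ansatz into \eqref{eq:nlw} and projecting onto the modulation directions gives a system of the schematic form $\lambda' \approx -b$, $b' \approx c_N \, \lambda^{?}\, u_0^*(0)$ (up to the correct power of $\lambda$ dictated by dimensional analysis and the interaction between $W_\lambda$ and the background), together with error terms controlled by $\|\bs g(t) - \bs u_0^*\|_\cE \to 0$ and by higher-order regularity of $\bs u_0^*$. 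The exponent $\tfrac{4}{6-N}$ should emerge precisely from integrating this ODE system twice: $b$ grows like $(T_+ - t)$ times the forcing, and $\lambda$ like $(T_+ - t)$ times $b$, self-consistently forcing $\lambda \lesssim (T_+ - t)^{4/(6-N)}$. I would carry this out by: (i) writing the modulation equations rigorously with explicit error bounds; (ii) integrating the $b$-equation from $t$ to $T_+$ using that $b(t) = -\lambda'(t)$ and $\lambda(T_+) = 0$; (iii) closing a bootstrap/Gronwall argument to obtain the upper bound on $\lambda$.

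The distinction between $N \in \{4,5\}$ (pointwise-in-time bound) and $N = 3$ (bound only along a sequence $t_n$) will come from the integrability of the interaction term: in $N = 3$ the slow spatial decay of $W$ (namely $W \sim |x|^{-1}$) makes the tail interaction $\int \Lambda W_\lambda \cdot \nabla(\text{background})$ only logarithmically or marginally controlled, so one can only assert the ODE inequality holds on a set of times of positive measure near $T_+$, hence a sequence along which the bound is saturated; in dimensions $4$ and $5$ the faster decay of $W$ makes all the relevant integrals converge absolutely and uniformly in $t$, yielding the bound for all $t$ close to $T_+$. The main obstacle I anticipate is step (i): obtaining the modulation equations with \emph{sharp enough} error terms — in particular, identifying the exact leading-order interaction coefficient $c_N$ and showing the genuinely nonlinear and cross terms (the coupling between $g$ and $W_\lambda$ at spatial scale $\lambda$) are lower order. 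This requires delicate estimates on localized $\dot H^1$ norms of the difference $\bs g(t) - \bs u_0^*$ on balls of radius $\sim \lambda$, exploiting the extra $H^{s+1}$ regularity of the profile to trade spatial localization for powers of $\lambda$; getting the powers to match $\tfrac{4}{6-N}$ exactly is where the regularity hypothesis $s > \tfrac{N-2}{2}$, $s \ge 1$ is consumed.
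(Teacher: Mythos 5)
The proposal misses the central mechanism of the paper's proof, which is \emph{energy conservation}, not a second-order modulation ODE for $\lambda$. The paper decomposes $\bs u(t) = \bs V(\lambda(t)) + \bs u^*(t) + \bs g(t)$, where $\bs u^*(t)$ is the nonlinear evolution of the asymptotic profile and $\bs V(\lambda)$ is a compactly supported truncation of $W_\lambda$, so that $\|\bs g(t)\|_\cE \to 0$. It then Taylor-expands the conserved energy around $\bs V(\lambda) + \bs u^*$: the zeroth-order term $E(\bs V(\lambda)+\bs u^*) - E(\bs W) - E(\bs u^*)$ is of size $c^*\lambda^{(N-2)/2}$ (Lemma~\ref{lem:loi-coer}), the linear term $\la \vD E(\bs V(\lambda)+\bs u^*), \bs g\ra$ is shown to be an approximate conservation law with a controllably small derivative (Lemmas~\ref{lem:loi-gamma-reg}, \ref{lem:loi-gamma}), and the quadratic term is coercive modulo the finitely many eigendirections $a^{\pm}$ of the linearized flow (Lemma~\ref{lem:coer}), which are controlled separately by an ODE trapping argument (Lemma~\ref{lem:loi-a}). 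The Lyapunov functional $\wt\varphi_\tx{M}(t) = \sup_{\tau\ge t}\bigl(C_\tx{I}c^*\lambda(\tau)^{(N-2)/2} - b_1(\tau)\bigr)$ is then shown to satisfy a single first-order differential inequality $|\wt\varphi_\tx{M}'| \lesssim (c^*)^{2/(N-2)}\wt\varphi_\tx{M}^{(3N-10)/(2(N-2))}$, whose integration directly yields $\lambda \lesssim (T_+-t)^{4/(6-N)}$.

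Your decomposition $\bs u = \bs W_\lambda + \bs g$ with $\bs g \to \bs u^*_0$ leaves the remainder non-small, so no coercivity of $\vD^2 E(\bs W_\lambda)$ can be applied to it; one must subtract $\bs u^*(t)$ (and truncate $W_\lambda$ because of its slow tail) before anything useful can be extracted from the energy expansion. Your proposed scheme --- write down $\lambda' \approx -b$, $b' \approx c_N \lambda^{?}u^*_0(0)$, and integrate twice --- is the heuristic behind the exponent $4/(6-N)$, but it is not an upper-bound argument: deriving such an ODE with error terms small enough to close a bootstrap is precisely the obstacle you flag, and the paper avoids it entirely by substituting energy conservation for the ``$b$-equation.'' Moreover, since the stable/unstable coefficients $a^{\pm}$ are a priori of the same size as $\|\bs g\|_\cE$, any modulation-only approach would still have to rule out exponential growth along $\yp_\lambda$ and backward growth along $\ym_\lambda$; this trapping step does not appear in your sketch.

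Finally, the reason the result degrades to a sequence of times when $N=3$ is not that some tail interaction integral fails to converge uniformly; all the relevant integrals converge. The issue is that the factor $c^*\lambda^{(N-4)/2}$ appearing in $|\wt\varphi_\tx{M}'|$ has a negative exponent $(N-4)/2 = -1/2$ when $N=3$, so it cannot be bounded by a positive power of $\wt\varphi_\tx{M}$; integrating the inequality then only controls the average $\int_t^{T_+}\lambda(\tau)^{-1/2}\ud\tau$ rather than $\lambda(t)$ pointwise, whence the sequence. These are genuine gaps: without introducing energy conservation (and the associated coercivity and $a^{\pm}$-trapping) the argument you outline does not close.
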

\begin{remark}
  \label{rem:Linf}
  Let $\bs u^* = (u^*, \dot u^*)$ be the solution of \eqref{eq:nlw} such that $\bs u^*(T_+) = \bs u^*_0$ and suppose that $0 \in \supp\bs u^*_0$.
  We will prove that there exists a universal constant $C_0$ such that in the above theorem one can take
  \begin{equation*}
    C = C_0\|u^*\|_{L^\infty((T_+ - \rho, T_+)\times B(0, \rho))}^\frac{2}{6-N},
  \end{equation*}
  where $\rho > 0$ is arbitrary and $B(0, \rho)$ is the ball of centre $0$ and radius $\rho$ in $\bR^N$.
  Notice that $u^* \in L^\infty((T_+ - \rho, T_+)\times \bR^N)$ by Appendix~\ref{sec:cauchy} and the Sobolev Embedding Theorem.

  If $0 \notin \supp \bs u^*_0$, then blow-up cannot occur, as follows from the classification
  of solutions of \eqref{eq:nlw} at energy level $E(\bs W)$ by Duyckaerts and Merle \cite{DM08}.
\end{remark}
\begin{remark}
  \label{rem:N3}
  In the case $N = 3$ we will prove that for $T_+ - t$ small enough there holds
  \begin{equation}
    \label{eq:moyenne}
    \int_t^{T_+}\frac{\ud \tau}{\sqrt{\lambda(\tau)}} \geq \frac{3}{\sqrt C}(T_+ - t)^\frac 13,
  \end{equation}
  which immediately implies \eqref{eq:loi-borne-N3}.

  If we assume that $\bs u^* \in H^3 \times H^2$, then \eqref{eq:loi-borne} holds also in the case $N = 3$, see Remark~\ref{rem:N3-cont}.
  I~believe that the proof of \eqref{eq:loi-borne-N3} given here could be adapted to cover the case $1 > s > \frac 12$.
\end{remark}
\begin{remark}
  \label{rem:optimal}
  In dimension $N = 5$ the bound \eqref{eq:loi-borne} is optimal, see \eqref{eq:con-speed}.
It is not clear if the bounds are optimal for $N \in \{3, 4\}$, due to slow decay of the bubble.
\end{remark}
\begin{remark}
  \label{rem:rough}
A natural problem is to determine sharp bounds for the blow-up speed in the case of less regular $\bs u^*_0$.
The method used in this paper allows to obtain some bounds for example in the case $1 \leq s< \frac 32$ in dimension $N = 5$,
but they are not optimal and I will not pursue this direction here.
\end{remark}
  In the case $u^*_0(0) = 0$ one could obtain various bounds depending on the asymptotics of $\bs u^*_0$ at $x = 0$,
  but this will not be considered in the present paper.
  Along the same line, one can ask if the sign of $u^*_0(0)$ is relevant in the case when $u^*_0(0) \neq 0$.
  It turns out that it is, but unfortunately our method requires the additional assumption $\bs u^*_0 \in H^3 \times H^2$:
  \begin{mainthm}
    \label{thm:neg}
    Let $N \in \{3, 4, 5\}$. Let $\bs u_0^* = (u_0^*, \dot u_0^*) \in H^3 \times H^2$ be a radial function such that
    \begin{equation}
      \label{eq:ustar-neg}
        u^*_0(0) < 0.
    \end{equation}
  There exist no radial solutions of \eqref{eq:nlw} such that
  \begin{equation}
    \label{eq:neg}
    \lim_{t \to T_+}\|\bs u(t) - \bs W_{\lambda(t)} - \bs u_0^*\|_\cE = 0, \qquad \lim_{t \to T_+} \lambda(t) = 0,\qquad T_+ < +\infty.
  \end{equation}
  \end{mainthm}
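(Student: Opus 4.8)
The plan is to argue by contradiction using the same modulation-theoretic machinery that underlies Theorem~\ref{thm:loi}, but extracting a sign condition rather than a size bound. Suppose $\bs u$ is a solution of \eqref{eq:nlw} satisfying \eqref{eq:neg} with $u_0^*(0) < 0$. Write the solution as $\bs u(t) = \bs W_{\lambda(t)} + \bs g(t)$ near $T_+$, where $\bs g(t) \to \bs u_0^*$ in $\cE$, and introduce modulation parameters so that $g(t)$ is orthogonal (in the appropriate pairing) to the scaling direction $\Lambda W = \big(\tfrac{N-2}{2} + x\cdot\grad\big)W$ of the bubble. The bubble concentrates ($\lambda(t)\to 0$), so the interaction between the bubble and the error is governed by the behaviour of $g$, hence of $u^*$, near the origin at the scale $\lambda(t)$; since $\bs u_0^* \in H^3 \times H^2$, by Sobolev embedding $u^*$ is continuous near $(T_+, 0)$ and $g(t,x) \approx u_0^*(0)$ on the ball $B(0, \lambda(t))$ for $t$ close to $T_+$.

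The heart of the argument is a differential inequality for a suitably chosen mixed quantity $\fb(t)$, morally $\fb(t) \approx \la \dot g(t), \Lambda W_{\lambda(t)}\ra$ or $\la \grad g(t), \grad(\Lambda W)_{\lambda(t)}\ra$, which measures the projection of the solution's momentum onto the instability direction of the bubble. Differentiating in time and using the equation, the modulation equations, and the coercivity of the linearized energy on the orthogonal complement of the resonant/kernel directions, one obtains, schematically,
\begin{equation*}
  \lambda'(t) \sim \fb(t), \qquad \fb'(t) \sim -\lambda(t)^{\alpha}\, u^*(T_+, 0) + (\text{lower order}),
\end{equation*}
where the source term on the right comes from the leading-order effect of the nonlinearity $f$ acting on $W_\lambda + g$, evaluated against $\Lambda W$; the sign of this source term is \emph{opposite} to the sign of $u^*(T_+,0) = u_0^*(0)$ because of the sign of the relevant inner product $\int f'(W)\Lambda W \,(\cdot)$. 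The key computation is to identify this source term precisely and to check that when $u_0^*(0) < 0$ it forces $\fb$ to grow monotonically with a fixed sign, hence $\lambda'$ to have a fixed sign, in a way incompatible with $\lambda(t)\to 0$ as $t \uparrow T_+$ over a finite time interval: integrating, one would get either $\lambda(t) \to 0$ too slowly (contradicting the finiteness of $T_+$ combined with the rate that the inequality forces) or $\lambda$ non-monotone in a way the inequality forbids. The role of the extra regularity $\bs u_0^* \in H^3\times H^2$ (beyond what Theorem~\ref{thm:loi} needs when $N=3$, and used uniformly here) is to control the error terms $\dot g$ and $\grad g$ pointwise near the origin and to justify that the source term genuinely sees $u_0^*(0)$ rather than being swamped by oscillations of $g$ at scale $\lambda(t)$.

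The main obstacle I anticipate is making the source term in the $\fb'$ equation rigorous: one must show that the contribution $\int \big(f(W_\lambda + g) - f(W_\lambda) - f'(W_\lambda)g\big)\cdot (\Lambda W)_\lambda$ and the cross term $\int f'(W_\lambda) g \cdot (\Lambda W)_\lambda$ combine to yield, to leading order as $\lambda \to 0$, a quantity of the form $c\, u_0^*(0)\,\lambda^{\alpha}$ with $c \neq 0$ of a definite sign and $\alpha$ depending on $N$ through the decay rate of $W$ and of $\Lambda W$ at infinity (this is exactly where the "slow decay of the bubble" noted in Remark~\ref{rem:optimal} makes the dimensions $N=3,4$ delicate). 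Once the sign and the power $\alpha$ are pinned down, one must also ensure the remainder terms — coming from the modulation equations, from $\la g, \text{errors}\ra$, and from the difference between $g(t)$ and $\bs u_0^*$ — are genuinely lower order; this uses the coercivity estimate for the linearized operator on the orthogonality subspace together with $\|\bs g(t) - \bs u_0^*\|_\cE \to 0$. After that, closing the contradiction is a routine ODE comparison argument: a function $\lambda > 0$ on a finite interval $[t_0, T_+)$ with $\lambda \to 0$ cannot satisfy $\lambda'' \gtrsim \lambda^{\alpha-1}$ (or the first-order system version) with the sign dictated by $u_0^*(0)<0$, since this would force $\lambda$ to be convex and increasing, or to blow up, near $T_+$.
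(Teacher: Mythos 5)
Your proposal takes a genuinely different route from the paper, and unfortunately the route you sketch has a real gap at the crucial closing step.

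The paper's actual proof of Theorem~\ref{thm:neg} is a direct energy-conservation argument, not an ODE argument. The key ingredient is the lower bound \eqref{eq:inter-positive} from Lemma~\ref{lem:loi-coer}: when $u_0^*(0) < 0$, the interaction term $E(\bs V(\lambda) + \bs u^*)-E(\bs W) - E(\bs u^*)$ is strictly positive and of size $\gtrsim c^*\lambda^{\frac{N-2}{2}}$. One then evaluates the energy expansion \eqref{eq:loi-energy} at the single time $t$ where $n(\bs g(t),\lambda(t))$ attains its supremum on $[t_0,T_+)$: coercivity (Lemma~\ref{lem:coer}) gives the lower bound $\gtrsim n(\bs g,\lambda)^2$ on the sum of the interaction term, the quadratic term and $2\big((a^-)^2+(a^+)^2\big)$; Lemma~\ref{lem:loi-a} shows $(a^\pm)^2$ is quartic hence negligible; and Lemma~\ref{lem:loi-gamma-reg} (this is precisely where $\bs u_0^*\in H^3\times H^2$ enters) shows the linear term $\la \vD E(\bs V(\lambda) + \bs u^*),\bs g\ra$ is $O(\sqrt{c_0}\,n^2)$. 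Plugging into \eqref{eq:loi-energy} yields $n^2 \lesssim \sqrt{c_0}\,n^2 + n^3$ at the max, a contradiction for $c_0$ small. No time integration of a second-order system is needed.

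The gap in your proposal is the assertion that the ODE $\lambda' \sim \fb$, $\fb' \sim -u_0^*(0)\lambda^\alpha$ (so $\fb'>0$ when $u_0^*(0)<0$) ``routinely'' forbids $\lambda\to 0$ in finite time. It does not: a convex, decreasing, positive function can perfectly well reach zero in finite time with $\lambda'\to 0$, and in fact $\lambda(t)=(T_+-t)^\beta$ with $\beta>1$ solves $\lambda''\sim\lambda^{\alpha}$ for suitable $\alpha<1$ (relevant when $N=3$). So convexity of $\lambda$ does not, by itself, produce the contradiction; one would need a much more careful analysis of the sign of $\fb$ and its initial condition, together with the constraint $\lambda'\to 0$ coming from $\|\bs g\|_\cE\to 0$, and there is no guarantee the argument closes. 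A secondary inaccuracy: you describe $\Lambda W$ as the ``instability direction.'' It is the zero mode of $L$ (kernel/scaling direction); the unstable direction is the eigenfunction $\cY$ with eigenvalue $-\nu^2$, and the paper's control of the unstable/stable coefficients $a^\pm$ (Lemma~\ref{lem:loi-a}) is what makes the coercivity usable, not a pairing with $\Lambda W$. The paper's static argument at the time of maximal $n$ sidesteps all of these difficulties and is the mechanism you were missing.
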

\begin{remark}
  \label{rem:radial}
  I expect that Theorems~\ref{thm:loi} and \ref{thm:neg} could be proved by similar methods without the assumption of $\bs u^*_0$ being radial.
\end{remark}
\subsection{Related results}
  \label{ssec:related}
  The problem of existence of an asymptotic profile at blow-up might be seen as a version of the classical question of asymptotic stability of solitons
  in the case when finite-time blow-up occurs. Decompositions of type \eqref{eq:DKM1} in suitable topologies are believed to hold for many models,
  but establishing this rigourously is a challenging problem.
  Historically, the study of finite type blow-up in the Hamiltonian setting received the most attention probably in the case of nonlinear Schr\"odinger equations (NLS).
  For the mass-critical NLS the conformal invariance leads to explicit blow-up solutions $S(t)$ with the asymptotic profile $u^* \equiv 0$.
  Bourgain and Wang \cite{BoWa97} constructed examples of blow-up solutions with $u^*$ regular and non-zero, the speed of blow-up however being the same as for $S(t)$.
  This is not a coincidence, as shown by a classification result of Merle and Rapha\"el \cite{MeRa05}.

  For the critical gKdV equation Martel, Merle and Rapha\"el \cite{MMR12-1p} proved that if the initial data decays sufficiently fast, then there is only one possible blow-up speed,
  given by the minimal mass blow-up solution. However, without the decay assumption other blow-up speeds are possible, as shown by the same authors in \cite{MMR12-3p}.

  These are the main two examples of the heuristic principle that the size of the interaction of the bubble with the rest of the solution influences or even determines the speed of blow-up.
  In the present paper we try to investigate this phenomenon in the energy-critical setting.

  Finally, let us mention that the problem of understanding the possible blow-up speeds is not limited to type II blow-up for critical equations. For example, for the subconformal and conformal
  NLW this was considered in the works of Merle and Zaag \cite{MeZa03}, \cite{MeZa05}.

  \subsection{Outline of the proof}

  Our proofs of Theorems~\ref{thm:loi} and \ref{thm:neg} are based on the following computation that we present here formally.

  Let $\bs u: [t_0; T_+) \to \cE$ be a solution of \eqref{eq:nlw} which satisfies \eqref{eq:DKM1}.
    At blow-up time, the energy of the bubble is completely decoupled from the energy of the asymptotic profile, hence
    \begin{equation}
      \label{eq:energy}
      E(\bs u) = E(\bs u^*_0) + E(\bs W_\lambda) = E(\bs u^*_0) + E(\bs W).
    \end{equation}
    Let $\bs u^*$ be the solution of \eqref{eq:nlw} with the initial data $\bs u^*(T_+) = \bs u^*_0$.
    Decompose $\bs u(t) = \bs W_{\lambda(t)} + \bs u^*(t) + \bs g(t)$. The \emph{modulation parameter} $\lambda$ is determined by a suitable orthogonality condition,
    and a standard procedure shows that $|\lambda'(t)| \lesssim \|\bs g(t)\|_\cE$.

    From the Taylor formula we obtain
    \begin{equation*}
      E(\bs u) = E(\bs u^* + \bs W_\lambda) + \la \vD E(\bs u^* + \bs W_\lambda), \bs g\ra + \frac 12 \la \vD^2 E(\bs u^* + \bs W_\lambda)\bs g, \bs g\ra + O(\|\bs g\|_\cE^3).
    \end{equation*}
    \paragraph{Step 1.} An explicit key computation shows that $$E(\bs u^* + \bs W_\lambda) - E(\bs u^*) - E(\bs W) \gtrsim -u^*_0(0)\lambda^\frac{N-2}{2}.$$
    It is clear that the sign of $u^*_0(0)$ is decisive.
    \paragraph{Step 2.} Near blow-up time $\bs u^*$ weakly interacts with $\bs W_\lambda$ and $\vD E(\bs W_\lambda) = 0$. This allows to replace $\la \vD E(\bs u^* + \bs W_\lambda), \bs g\ra$
    by $\la \vD E(\bs u^*), \bs g\ra$. Using the Hamiltonian structure it is seen that this quantity is, at first order in $\bs g$, a conservation law.
    Estimating some error terms we conclude that this term can be neglected.
    \paragraph{Step 3.} Let us suppose for a moment that $\vD^2 E(\bs W)$ is a coercive functional in the sense that $\la \vD^2 E(\bs u^* + \bs W_\lambda)\bs g, \bs g\ra \gtrsim \|\bs g\|_\cE^2$.
    Using \eqref{eq:energy} and the two preceding steps we find $|\lambda'|^2 \lesssim \|\bs g\|_\cE^2 \lesssim u^*_0(0)\lambda^\frac{N-2}{2}$. In the case $u^*_0(0) < 0$ this is contradictory,
    and in the case $u^*_0(0) > 0$ the conclusion follows by integrating the differential inequality for $\lambda$.

~

Strictly speaking, $\vD^2 E(\bs W)$ is not a coercive functional, and much of the proof is devoted to controlling
the negative directions, which are related to the eigendirections of the flow linearized around $\bs W$.
Clarifying the second step above is another major technical difficulty of this paper.
    \subsection{Acknowledgements}
  This paper has been prepared as a part of my \mbox{Ph.~\!D.} under supervision of Y.~Martel and F.~Merle.
  I would like to thank my supervisors for their constant support and many helpful discussions.
  The author has been supported by the ERC~grant $291214$ BLOWDISOL.

\subsection{Notation}
\label{ssec:notation}
We introduce the inifinitesimal generators of scale change
$$\Lambda_s := \big(\frac{N}{2} - s\big) + x\cdot\grad.$$
For $s = 1$ we omit the subscript and write $\Lambda = \Lambda_1$.
We denote $\Lambda_\cE$, $\Lambda_\cF$ and $\Lambda_{\cE^*}$ the inifinitesimal generators of the scaling which is critical for a given norm,
that is
$$\Lambda_\cE = (\Lambda, \Lambda_0),\quad \Lambda_\cF = (\Lambda_0, \Lambda_{-1}),\quad \Lambda_{\cE^*} = (\Lambda_{-1}, \Lambda_0).$$

The dimension of the space will be denoted $N$. The domain of the function spaces is always $\bR^N$. We introduce the following notation for some frequently used function spaces:
$X^s := \dot H^{s+1} \cap \dot H^1$ for $s \geq 0$, $\cE := \enorm$, $\cF := L^2 \times \dot H^{-1}$.
The bracket $\la\cdot, \cdot\ra$ denotes the distributional pairing and the scalar product in the spaces $L^2$, $L^2 \times L^2$.
Notice that $\cE^* \simeq \dot H^{-1}\times L^2$ through the natural isomorphism induced by $\la \cdot, \cdot\ra$.

For a function space $\cA$, $O_\cA(m)$ denotes any $a \in \cA$ such that $\|a\|_\cA \leq Cm$ for some constant $C > 0$.
For positive quantities $m_1$ and $m_2$ we write $m_1 \lesssim m_2$ for $m_1 = O(m_2)$ and $m_1 \sim m_2$ for $m_1 \lesssim m_2 \lesssim m_1$.
We denote $B_\cA(x_0, \delta)$ the open ball of center $x_0$ and radius $\delta$ in the space $\cA$.
If $\cA$ is not specified, it means that $\cA = \bR$.

\section{The proofs}
\subsection{Properties of the linearized operator}
\label{ssec:lin}
%
%
Linearizing $-\Delta u - f(u)$ around $W$, $u = W + g$, we obtain a Schr\"odinger operator $$Lg = (-\Delta - f'(W))g.$$
Notice that $L(\Lambda W) = \dd\lambda\rstr_{\lambda = 1}\big(-\Delta W_\lambda - f(W_\lambda)\big) = 0$.
It is known that $L$ has exactly one strictly negative simple eigenvalue which we denote $-\nu^2$ (we take $\nu > 0$).
We denote the corresponding positive eigenfunction $\cY$, normalized so that $\|\cY\|_{L^2} = 1$.
By elliptic regularity $\cY$ is smooth and by Agmon estimates it decays exponentially.
Self-adjointness of $L$ implies that
\begin{equation}
  \label{eq:YLW}
  \la \cY, \Lambda W\ra = 0.
\end{equation}

We define
\begin{equation}
  \label{eq:Ya}
  \ym := \big(\frac 1\nu\cY, -\cY\big),\qquad \yp := \big(\frac 1\nu\cY, \cY\big),\qquad \alpha^- := \frac 12(\nu\cY, -\cY),\qquad \alpha^+ := \frac 12(\nu\cY, \cY).
\end{equation}

We have $J\circ\vD^2 E(\bs W) = \begin{pmatrix} 0 & \Id \\ -L & 0\end{pmatrix}$. A short computation shows that
\begin{equation}
  \label{eq:eigenvect}
  J\circ\vD^2 E(\bs W)\ym = -\nu \ym,\qquad J\circ\vD^2 E(\bs W)\yp = \nu \yp
\end{equation}
and
\begin{equation}
  \label{eq:eigencovect}
  \la\alpha^-, J\circ\vD^2 E(\bs W)\bs g\ra = -\nu\la\alpha^-, \bs g\ra,\qquad \la\alpha^+, J\circ\vD^2 E(\bs W)\bs g\ra = \nu\la\alpha^+, \bs g\ra,\qquad \forall \bs g\in\cE.
\end{equation}
Notice that $\la \alpha^-, \ym\ra = \la \alpha^+, \yp\ra = 1$ and $\la \alpha^-, \yp\ra = \la \alpha^+, \ym\ra = 0$.

The rescaled versions of these objects are
\begin{equation}
  \label{eq:Yal}
  \yml := \big(\frac 1\nu\cY_\lambda, -\cY_\uln\lambda\big),\qquad \ypl := \big(\frac 1\nu\cY_\lambda, \cY_\uln\lambda\big),
  \qquad \alpha^-_\lambda := \frac 12\big(\frac{\nu}{\lambda}\cY_\uln\lambda, -\cY_\uln\lambda\big),\qquad \alpha^+_\lambda := \frac 12\big(\frac{\nu}{\lambda}\cY_\uln\lambda, \cY_\uln\lambda\big).
\end{equation}
The scaling is chosen so that $\la\alpha_\lambda^-, \ym_\lambda\ra = \la\alpha_\lambda^+, \yp_\lambda\ra = 1$.
We have
\begin{equation}
  \label{eq:eigenvectl}
  J\circ\vD^2 E(\bs W_\lambda)\ym_\lambda = -\frac{\nu}{\lambda} \ym_\lambda,\qquad J\circ\vD^2 E(\bs W_\lambda)\yp_\lambda = \frac{\nu}{\lambda} \yp_\lambda
\end{equation}
and
\begin{equation}
  \label{eq:eigencovectl}
  \la\alpha_\lambda^-, J\circ\vD^2 E(\bs W_\lambda)\bs g\ra = -\frac{\nu}{\lambda}\la\alpha_\lambda^-, \bs g\ra,
  \qquad \la\alpha_\lambda^+, J\circ\vD^2 E(\bs W_\lambda)\bs g\ra = \frac{\nu}{\lambda}\la\alpha_\lambda^+, \bs g\ra,\qquad \forall \bs g\in\cE.
\end{equation}

Let $\cZ$ be a $C_0^\infty$ function such that
\begin{equation}
  \label{eq:Z}
    \la \cZ, \Lambda W\ra > 0, \qquad \la \cZ, \cY\ra = 0
\end{equation}
(the first condition is the essential one and the second allows to simplify some computations).
We recall the following result.
\begin{proposition}[{\cite[Lemma~6.1]{moi15p}}, {\cite[Proposition~5.5]{DM08}}]
  \label{prop:coerL}
  There exists a constant $c_L > 0$ such that
  \begin{equation*}
    v\in\dot H^1\ \text{radial},\quad \la \cY, v\ra = \la \cZ, v\ra = 0 \quad\Rightarrow\quad \frac 12 \la v, Lv\ra \geq c_L\|v\|_{\dot H^1}^2.
  \end{equation*} \qed
\end{proposition}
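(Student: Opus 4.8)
The plan is to treat this as a standard coercivity statement for the linearized operator $L$, of the same type as those proved in \cite{DM08, moi15p}. Write $Q(v) := \la v, Lv\ra = \int_{\bR^N} |\grad v|^2 - f'(W)v^2 \ud x$, where $f'(W) = \tfrac{N+2}{N-2}W^{4/(N-2)}$ decays like $|x|^{-4}$ and hence belongs to $L^{N/2}(\bR^N)\cap L^\infty(\bR^N)$. First I would record two soft facts. By H\"older and the Sobolev embedding $\dot H^1 \hookrightarrow L^{2N/(N-2)}$ one has $|\la f'(W)v, v\ra| \lesssim \|v\|_{\dot H^1}^2$, so $Q$ is a bounded quadratic form on $\dot H^1$. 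More importantly, the functional $v \mapsto \la f'(W)v, v\ra$ is weakly sequentially continuous on $\dot H^1_{\mathrm{rad}}$: splitting $\bR^N$ into a ball $\{|x| \le R\}$, where the Rellich theorem gives $L^2$-compactness and $f'(W) \in L^\infty$, and the complement $\{|x| > R\}$, where $\|f'(W)\|_{L^{N/2}(\{|x|>R\})} \to 0$ bounds the tail uniformly, one gets $\la f'(W)v_n, v_n\ra \to \la f'(W)v, v\ra$ whenever $v_n \wto v$. Finally $\cZ \in C_0^\infty$ and $\cY$ decays exponentially, so both lie in $\dot H^{-1}$ and the constraints $\la \cY, v\ra = \la \cZ, v\ra = 0$ pass to weak $\dot H^1$ limits.

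Next I would establish pointwise positivity. Using the spectral information recalled above — $-\nu^2$ is the unique negative eigenvalue of $L$ and it is simple with eigenfunction $\cY$, while the kernel of $L$ acting on radial functions is exactly $\bR\cdot\Lambda W$ (non-degeneracy of the Aubin--Talenti bubble) — one shows $Q(v) \ge 0$ for every radial $v$ with $\la \cY, v\ra = 0$, with equality precisely when $v \in \bR\cdot\Lambda W$; note $\Lambda W$ does satisfy this constraint by \eqref{eq:YLW}. Since $\la \cZ, \Lambda W\ra > 0$, imposing in addition $\la \cZ, v\ra = 0$ removes every nonzero multiple of $\Lambda W$, so $Q(v) > 0$ for all nonzero $v$ in the constraint subspace $H' := \{v \in \dot H^1_{\mathrm{rad}} : \la \cY, v\ra = \la \cZ, v\ra = 0\}$. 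The one delicate point is that in dimensions $N = 3, 4$ one has $\dot H^1 \not\subset L^2$ and $\Lambda W \notin L^2$, so the spectral argument must be run on the closed quadratic form attached to $L$ (whose radial form domain is $\dot H^1_{\mathrm{rad}}$), or via approximation by compactly supported functions; the normalization $\la \cZ, \cY\ra = 0$ only serves to keep the two constraints uncoupled and is not essential here.

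Finally I would upgrade strict positivity to a uniform constant by the usual concentration--compactness contradiction. If no $c_L > 0$ worked, there would exist $v_n \in H'$ with $\|v_n\|_{\dot H^1} = 1$ and $Q(v_n) \to 0$; passing to a subsequence $v_n \wto v_\infty$ in $\dot H^1$, the weak continuity from the first paragraph gives $\la f'(W)v_\infty, v_\infty\ra = \lim_n (1 - Q(v_n)) = 1$, so $v_\infty \ne 0$, and $v_\infty \in H'$ since the constraints are weakly closed; but weak lower semicontinuity of the norm forces $Q(v_\infty) = \|v_\infty\|_{\dot H^1}^2 - 1 \le 0$, contradicting the strict positivity of $Q$ on $H' \setminus \{0\}$. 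The main obstacles are exactly the two steps that are not purely formal: the absence of a compact Sobolev embedding on $\bR^N$, circumvented by the $L^{N/2}$-compactness of multiplication by $f'(W)$, and the identification of the radial kernel of $L$ with $\bR\cdot\Lambda W$, which is the genuine elliptic input (in the non-radial setting one would additionally have to quotient out the translation directions $\partial_j W$).
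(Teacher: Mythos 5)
The paper states this proposition with citations to the two references and a \qed but gives no proof of its own, so there is no in-paper argument to compare to. Your proof is correct and takes the standard route to such coercivity statements: boundedness of the form on $\dot H^1$, weak sequential continuity of $v \mapsto \la f'(W)v, v\ra$ via $f'(W)\in L^{N/2}$ together with local Rellich compactness, strict positivity on the constraint subspace deduced from the non-degeneracy of $L$ (radial kernel $= \bR\cdot\Lambda W$, correctly flagged as the genuine elliptic input, and correctly handled despite $\Lambda W\notin L^2$ when $N\in\{3,4\}$), and finally the concentration--compactness contradiction.
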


\begin{lemma}
  \label{lem:coer}
  There exists a constant $c > 0$ such that if $\|\bs V - \bs W_\lambda\|_\cE < c$, then for all $\bs g \in \cE$ such that $\la \cZ_\uln\lambda, g\ra = 0$ there holds
  \begin{equation*}
    \frac 12 \la \vD^2 E(\bs V)\bs g, \bs g\ra +2\big(\la \alpha^-_\lambda, \bs g\ra^2 + \la \alpha^+_\lambda, \bs g\ra^2\big) \gtrsim \|\bs g\|_\cE^2.
  \end{equation*}
\end{lemma}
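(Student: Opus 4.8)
The strategy is to reduce the coercivity of the quadratic form $\frac12\la\vD^2E(\bs V)\bs g,\bs g\ra$ plus the two correction terms to the scalar coercivity statement of Proposition~\ref{prop:coerL}. First I would reduce to $\bs V=\bs W_\lambda$: writing $\vD^2E(\bs V)=\vD^2E(\bs W_\lambda)+\big(\vD^2E(\bs V)-\vD^2E(\bs W_\lambda)\big)$, the difference is controlled in the operator norm on $\cE$ by $\|\bs V-\bs W_\lambda\|_\cE$ (the nonlinearity $f$ is $C^1$ with the right homogeneity, so $f'(V)-f'(W_\lambda)$ acts as a small perturbation on $\dot H^1$ by Sobolev and H\"older), and hence can be absorbed into the right-hand side provided $c$ is small enough. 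Next, by the exact scaling covariance of $E$, $\cY_\lambda$, $\cZ_{\uln\lambda}$, $\alpha^\pm_\lambda$ recorded in \eqref{eq:Yal}--\eqref{eq:eigencovectl}, it suffices to treat $\lambda=1$, i.e.\ to prove
\begin{equation*}
  \la\cZ,g\ra=0\quad\Longrightarrow\quad \tfrac12\la\vD^2E(\bs W)\bs g,\bs g\ra+2\big(\la\alpha^-,\bs g\ra^2+\la\alpha^+,\bs g\ra^2\big)\gtrsim\|\bs g\|_\cE^2.
\end{equation*}

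Now write $\bs g=(g,\dot g)$. Since $\vD^2E(\bs W)=\begin{pmatrix}L&0\\0&\Id\end{pmatrix}$, one has $\frac12\la\vD^2E(\bs W)\bs g,\bs g\ra=\frac12\la g,Lg\ra+\frac12\|\dot g\|_{L^2}^2$. The second term already controls $\|\dot g\|_{L^2}^2$, so the issue is the potential-energy part $\frac12\la g,Lg\ra$. Here $L$ has the single negative eigenvalue $-\nu^2$ with eigenfunction $\cY$, and the kernel direction $\Lambda W\notin\dot H^1$ in low dimensions, so the clean way is to apply Proposition~\ref{prop:coerL} directly. Decompose $g=a\cY+g^\perp$ with $\la\cY,g^\perp\ra=0$; using $\la\cZ,\cY\ra=0$ from \eqref{eq:Z} and the hypothesis $\la\cZ,g\ra=0$ we get $\la\cZ,g^\perp\ra=0$ as well, so Proposition~\ref{prop:coerL} gives $\frac12\la g^\perp,Lg^\perp\ra\ge c_L\|g^\perp\|_{\dot H^1}^2$. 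Since $L\cY=-\nu^2\cY$ and $\la\cY,g^\perp\ra=0$, the cross terms vanish and $\frac12\la g,Lg\ra=-\frac{\nu^2}{2}a^2\|\cY\|_{L^2}^2+\frac12\la g^\perp,Lg^\perp\ra=-\frac{\nu^2}{2}a^2+\frac12\la g^\perp,Lg^\perp\ra$. Thus the only bad term is $-\frac{\nu^2}2a^2$, and it must be beaten by the correction $2(\la\alpha^-,\bs g\ra^2+\la\alpha^+,\bs g\ra^2)$.

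The final step is the linear-algebra identity making the corrections work. From \eqref{eq:Ya}, $\la\alpha^-,\bs g\ra=\frac12(\nu\la\cY,g\ra-\la\cY,\dot g\ra)$ and $\la\alpha^+,\bs g\ra=\frac12(\nu\la\cY,g\ra+\la\cY,\dot g\ra)$, so $\la\alpha^-,\bs g\ra^2+\la\alpha^+,\bs g\ra^2=\frac12\big(\nu^2\la\cY,g\ra^2+\la\cY,\dot g\ra^2\big)=\frac12\big(\nu^2a^2+\la\cY,\dot g\ra^2\big)$. Hence $2(\la\alpha^-,\bs g\ra^2+\la\alpha^+,\bs g\ra^2)\ge\nu^2a^2$, which already dominates $\frac{\nu^2}2a^2$ with room to spare; adding $\frac{\nu^2}2a^2$ left over (or a fraction thereof) one recovers control of $a^2\|\cY\|_{\dot H^1}^2\lesssim a^2$. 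Collecting terms, $\frac12\la\vD^2E(\bs W)\bs g,\bs g\ra+2(\la\alpha^-,\bs g\ra^2+\la\alpha^+,\bs g\ra^2)\gtrsim \|g^\perp\|_{\dot H^1}^2+a^2+\|\dot g\|_{L^2}^2\gtrsim\|g\|_{\dot H^1}^2+\|\dot g\|_{L^2}^2=\|\bs g\|_\cE^2$, using $\|g\|_{\dot H^1}^2\lesssim\|g^\perp\|_{\dot H^1}^2+a^2\|\cY\|_{\dot H^1}^2$. Finally, undo the rescaling and reinstate the perturbation $\bs V$, choosing $c$ small enough that the $O(\|\bs V-\bs W_\lambda\|_\cE\|\bs g\|_\cE^2)$ error is at most half the coercivity constant. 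The main obstacle is the perturbative step: one must make sure the $\bs V$-dependent part of $\vD^2E$ is genuinely small \emph{uniformly in $\lambda$} as an operator on $\cE$, which is where the scaling invariance of the setup is essential, and one must track the low-dimensional Sobolev constraints ($N\le5$) so that $f'(W_\lambda)$ and its perturbations are admissible multipliers on $\dot H^1$.
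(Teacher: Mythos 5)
Your proof is correct and its algebraic content is essentially equivalent to the paper's, but the decomposition you use is slightly different. The paper decomposes the full vector $\bs g = (g,\dot g)$ along the eigenvectors $\ym, \yp$ of the linearized flow, writing $\bs g = a^-\ym + a^+\yp + \bs k$ with $\la\alpha^\pm,\bs k\ra = 0$, and actually proves the slightly stronger inequality
\begin{equation*}
  \tfrac12\la\vD^2E(\bs W)\bs g,\bs g\ra + 2\la\alpha^-,\bs g\ra\cdot\la\alpha^+,\bs g\ra \geq c_L\|\bs k\|_\cE^2,
\end{equation*}
exploiting the identity $\tfrac12\la\vD^2E(\bs W)\bs g,\bs g\ra = -2a^-a^+ + \tfrac12\la\vD^2E(\bs W)\bs k,\bs k\ra$; the lemma then follows because $2((a^-)^2+(a^+)^2) - 2a^-a^+ \gtrsim (a^-)^2+(a^+)^2$. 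You instead decompose only the scalar component $g = a\cY + g^\perp$ and keep $\dot g$ undecomposed. The two are reconciled by noting that your $a$ equals $(a^-+a^+)/\nu$ and your $g^\perp$ equals the paper's $k$; what the paper's vector decomposition buys is an exact cancellation that isolates the single product $-2a^-a^+$, whereas your version produces the negative term $-\tfrac{\nu^2}{2}a^2$ and beats it directly with $2((a^-)^2+(a^+)^2) \geq \nu^2 a^2$. Both lead to the same conclusion with comparable effort; yours is arguably a bit more elementary since it does not need the eigenvector structure \eqref{eq:Ya}--\eqref{eq:eigenvectl} beyond the mere definitions of $\alpha^\pm$. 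One small inaccuracy in your write-up: you say ``the kernel direction $\Lambda W \notin\dot H^1$ in low dimensions,'' but in fact $\Lambda W \in \dot H^1$ for all $N\geq 3$ (it is $\Lambda W \notin L^2$ that fails when $N\leq 4$); this is harmless since you do not use it.
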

\begin{proof}
  We have
  $$
  \la \vD^2 E(\bs V)\bs g, \bs g\ra = \la \vD^2 E(\bs W_\lambda)\bs g, \bs g\ra + \int \big(f'(V) - f'(W_\lambda)\big)|g|^2\ud x.
  $$
  By H\"older, the last integral is $\lesssim c\|\bs g\|_\cE^2$, hence it suffices to prove the lemma with $\bs V = \bs W_\lambda$.
  Without loss of generality we can assume that $\lambda = 1$.
  We will show the following stronger inequality:
  \begin{equation}
    \label{eq:coer-lin-exact}
    \frac 12\la\vD^2 E(\bs W)\bs g, \bs g\ra + 2\la\alpha^-, \bs g\ra\cdot\la\alpha^+, \bs g\ra \geq c_L\|\bs g - \la\alpha^-, \bs g\ra\ym - \la\alpha^+, \bs g\ra\yp\|_\cE^2.
  \end{equation}

  Let $a^- = \la\alpha^-, \bs g\ra$, $a^+ = \la\alpha^+, \bs g\ra$ and decompose $\bs g = a^-\ym + a^+\yp + \bs k$, so that $\la\alpha^-, \bs k\ra = \la\alpha^+, \bs k\ra = 0$.
  From $\la \cZ, \cY\ra = 0$ we deduce $\la \cZ, k\ra = 0$.
  We have $g = \frac{a^- + a^+}{\nu}\cY + k$ and $\dot g = (-a^- + a^+)\cY + \dot k$, hence
  \begin{align*}
    \frac 12 \la\vD^2E(\bs W)\bs g, \bs g\ra &= \frac 12\big\la \frac {a^- + a^+}{\nu}\cY + k, -(a^- + a^+)\nu \cY + Lk\big\ra \\
    &+ \frac 12\big\la (-a^- + a^+)\cY + \dot k, (-a^- + a^+)\cY + \dot k\big\ra \\
    &= -\frac 12(a^- + a^+)^2\la \cY, \cY\ra -(a^- + a^+)\nu\la \cY, k\ra + \frac 12\la k, Lk\ra \\
    &+ \frac 12(-a^- + a^+)^2 \la \cY, \cY\ra+(-a^- + a^+)\la\cY, \dot k\ra + \frac 12\la \dot k, \dot k\ra \\
    &= -2a^-a^+\la\cY, \cY\ra -2a^-\la\alpha^+, \bs k\ra -2a^+\la\alpha^-, \bs k\ra + \frac 12\big(\la k, Lk\ra + \la \dot k, \dot k\ra\big) \\
    &= -2a^-a^+ + \frac 12\la \vD^2 E(\bs W)\bs k, \bs k \ra.
  \end{align*}
  Invoking Proposition~\ref{prop:coerL} finishes the proof of \eqref{eq:coer-lin-exact}.
\end{proof}
\subsection{Modulation}
\label{ssec:mod}
Recall that $X^s := \dot H^{s+1}\cap \dot H^1$. Let $\bs u^*_0 \in X^s \times H^s$, $T_+ \in \bR$ and let $\bs u^*$ be the solution of \eqref{eq:nlw}
with initial data $\bs u^*(T_+) = \bs u^*_0$. Without loss of generality we will assume that $\frac{N-2}{2} < s \leq 2$.
For fixed $\rho > 0$ we denote
\begin{equation*}
  c^* := \|u^*\|_{L^\infty((T_+ - \rho, T_+) \times B(0, \rho))}.
\end{equation*}
We can assume that $c^* > 0$ (otherwise there is no blow-up, cf. Remark~\ref{rem:Linf}).
Note that because of finite speed of propagation, we can also assume that $\|\bs u^*(t)\|_\cE$ is smaller than any fixed strictly positive constant
and that $\|u^*(t)\|_{L^\infty} \leq 2c^*$ for $t$ close to $T_+$.

Because of a slow decay of $W$, we will introduce compactly supported approximations of $W_\lambda$. Let
\begin{equation}
  \label{eq:R}
  R := (c_0 \cdot c^*)^\frac{1}{-N+2},
\end{equation}
where $c_0 > 0$ is a small universal constant to be chosen later.

We denote
\begin{equation}
  \label{eq:V}
  V(\lambda)(x) :=
  \begin{cases}
    W_\lambda(x) - \zeta(\lambda)\qquad &\text{for }|x| \leq R\sqrt\lambda, \\
    0 \qquad &\text{for }|x| \geq R\sqrt\lambda,
  \end{cases}
\end{equation}
where
\begin{equation}
  \label{eq:ustar-zeta}
  \zeta(\lambda) := W_\lambda(R\sqrt\lambda) = \frac{1}{\lambda^{\frac{N-2}{2}}}\Bigl(1+\frac{R^2}{N(N-2)\lambda}\Bigr)^{-\frac{N-2}{2}}
  = \Bigl(\lambda+\frac{R^2}{N(N-2)}\Bigr)^{-\frac{N-2}{2}}.
\end{equation}
We will also denote
\begin{equation*}
  \bs V(\lambda) := (V(\lambda), 0)\in \cE.
\end{equation*}
Notice that
\begin{equation}
  \label{eq:Vl}
  \partial_\lambda V(\lambda)(x) =
  \begin{cases}
    -(\Lambda W)_{\uln\lambda}(x) - \zeta'(\lambda)\qquad &\text{for }|x| < R\sqrt\lambda, \\
    0 \qquad &\text{for }|x| > R\sqrt\lambda.
  \end{cases}
\end{equation}
\begin{lemma}
  \label{lem:prop-V}
  Let $s > \frac{N-2}{2}$ and $s \geq 1$. The following estimates are true with universal constants:
  \begin{align}
    \|V(\lambda) - W_\lambda\|_{\dot H^1} &\lesssim R^\frac{-N+2}{2}\lambda^\frac{N-2}{4}, \label{eq:V-W} \\
    \|V(\lambda) - W_\lambda\|_{L^\infty} &\lesssim R^{-N+2}, \label{eq:V-Linf} \\
    \|\partial_\lambda V(\lambda) + \Lambda W_\uln\lambda\|_{L^\infty(|x| < R\sqrt\lambda)} &\lesssim R^{-N}, \label{eq:lV-Linf} \\
    \|\partial_\lambda V(\lambda)\|_{L^\frac{2N}{N+2}} &\lesssim R^\frac{6-N}{2}\lambda^\frac{N-2}{4}, \label{eq:lV-H-1} \\
    \|\partial_\lambda V(\lambda)\|_{H^{1-s}} &\ll \lambda^\frac{N-4}{2}\qquad \text{as }\lambda \to 0. \label{eq:lV-H1-s}
  \end{align}
\end{lemma}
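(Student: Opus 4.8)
My plan rests on the fact that $V(\lambda)$ differs from $W_\lambda$ in a completely explicit way, so that all five bounds reduce to elementary radial integrals after the rescaling $x = \lambda y$. By \eqref{eq:V} and \eqref{eq:ustar-zeta}, the radial function $V(\lambda) - W_\lambda$ equals the constant $-\zeta(\lambda)$ on the ball $\{|x| \le R\sqrt\lambda\}$ and equals $-W_\lambda$ outside it, and it is continuous across the sphere $|x| = R\sqrt\lambda$ since $W_\lambda(R\sqrt\lambda) = \zeta(\lambda)$ there. This yields \eqref{eq:V-Linf} at once: inside the ball $|V(\lambda) - W_\lambda| = \zeta(\lambda) = \big(\lambda + \tfrac{R^2}{N(N-2)}\big)^{-\frac{N-2}{2}} \lesssim R^{-N+2}$, while outside, monotonicity of $W_\lambda$ gives $|V(\lambda) - W_\lambda| = W_\lambda(x) \le W_\lambda(R\sqrt\lambda) = \zeta(\lambda)$, the same bound. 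For \eqref{eq:V-W}, the difference being constant on the ball, its distributional gradient is $-\nabla W_\lambda\cdot\mathbf 1_{\{|x| > R\sqrt\lambda\}}$ (no singular part, by continuity), so $\|V(\lambda) - W_\lambda\|_{\dot H^1}^2 = \int_{|y| > R/\sqrt\lambda}|\nabla W|^2\ud y \sim (R/\sqrt\lambda)^{-(N-2)}$, using $|\nabla W(y)| \sim |y|^{-(N-1)}$ at infinity; taking a square root finishes it. Estimate \eqref{eq:lV-Linf} is read off from \eqref{eq:Vl}: there $\partial_\lambda V(\lambda) + (\Lambda W)_{\uln\lambda} = -\zeta'(\lambda)$ is constant on the ball, and $|\zeta'(\lambda)| = \tfrac{N-2}{2}\big(\lambda + \tfrac{R^2}{N(N-2)}\big)^{-N/2} \lesssim R^{-N}$.

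For the last two estimates I would pass through Sobolev duality: $L^{2N/(N+2)}\hookrightarrow\dot H^{-1}$, and more generally $L^{2N/(N+2s-2)}\hookrightarrow\dot H^{1-s}=\dot H^{-(s-1)}$ for $1\le s\le 2$ (together with $\|\cdot\|_{H^{1-s}}\le\|\cdot\|_{\dot H^{1-s}}$, valid since $s\ge 1$), so that it suffices to bound $\partial_\lambda V(\lambda)$ in $L^p$ for those two exponents. Writing $\partial_\lambda V(\lambda) = -(\Lambda W)_{\uln\lambda}\mathbf 1_{\{|x|\le R\sqrt\lambda\}} - \zeta'(\lambda)\mathbf 1_{\{|x|\le R\sqrt\lambda\}}$, the $\zeta'$-term contributes at most $|\zeta'(\lambda)|\,|B(0,R\sqrt\lambda)|^{1/p}\lesssim R^{-N}(R\sqrt\lambda)^{N/p}$; this carries two extra powers of $R^{-1}$ and one extra power of $\lambda$ relative to the main term below, hence is negligible in the regime that matters ($R$ large, $\lambda$ small). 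For the main term, scaling gives $\|(\Lambda W)_{\uln\lambda}\mathbf 1_{\{|x|\le R\sqrt\lambda\}}\|_{L^p}^p = \lambda^{N-Np/2}\int_{|y|\le R/\sqrt\lambda}|\Lambda W(y)|^p\ud y$; since $\Lambda W(y) = \big(\tfrac{N-2}{2}-\tfrac{|y|^2}{2N}\big)\big(1+\tfrac{|y|^2}{N(N-2)}\big)^{-N/2}$ decays like $|y|^{-(N-2)}$ at infinity, and $(N-2)p<N$ in both cases — for $p=2N/(N+2)$ because $N<6$, for $p=2N/(N+2s-2)$ because $s>\tfrac{N-2}{2}$ — the truncated integral is $\lesssim(R/\sqrt\lambda)^{N-(N-2)p}$. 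Collecting powers gives $\|(\Lambda W)_{\uln\lambda}\mathbf 1_{\{|x|\le R\sqrt\lambda\}}\|_{L^p}\lesssim R^{N/p-(N-2)}\lambda^{N/(2p)-1}$, which is exactly $R^{(6-N)/2}\lambda^{(N-2)/4}$ when $p=2N/(N+2)$, proving \eqref{eq:lV-H-1}, and $R^{(2s+2-N)/2}\lambda^{(N+2s-6)/4}$ when $p=2N/(N+2s-2)$.

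The one point beyond routine bookkeeping is the passage from $\lesssim$ to $\ll$ in \eqref{eq:lV-H1-s}: here $R$ is a fixed constant determined by $\bs u^*_0$ through \eqref{eq:R}, so the bound just obtained is $\ll\lambda^{(N-4)/2}$ as $\lambda\to 0$ precisely when $\tfrac{N+2s-6}{4}>\tfrac{N-4}{2}$, that is $2s+2>N$ — which holds, with room to spare, by the strict hypothesis $s>\tfrac{N-2}{2}$; the leftover $\zeta'$-contribution has an even higher power of $\lambda$, hence is also $\ll\lambda^{(N-4)/2}$. I expect no genuine obstacle here: the only care needed is to keep track of which Sobolev exponents and which inequalities are strict, and to confirm that the $\zeta'(\lambda)$ terms are truly of lower order when $R$ is large and $\lambda\to 0$.
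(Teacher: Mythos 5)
Your proof is correct and takes essentially the same approach as the paper: explicit form of $V(\lambda)-W_\lambda$ and $\partial_\lambda V(\lambda)$, continuity across $|x|=R\sqrt\lambda$, then scaling reduces each estimate to an elementary radial integral. The only point of departure is \eqref{eq:lV-H1-s}, where the paper works dimension by dimension with ad hoc choices of $q$ satisfying $L^q\subset H^{1-s}$, while you pass uniformly through the dual Sobolev endpoint $L^{2N/(N+2s-2)}\hookrightarrow \dot H^{1-s}\hookrightarrow H^{1-s}$ (the last embedding using $1-s\le 0$) and track exponents once for all $N\in\{3,4,5\}$; the hypothesis $s>\frac{N-2}{2}$ appears exactly as the convergence condition $(N-2)p<N$ and as the strict inequality $\frac{N+2s-6}{4}>\frac{N-4}{2}$, so the unified route gives the same conclusion slightly more transparently.
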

\begin{proof}
  To prove \eqref{eq:V-W}, we write
  \begin{equation*}
    \begin{aligned}
      \|V(\lambda) - W_\lambda\|_{\dot H^1}^2 &= \int_{|x| \geq R\sqrt\lambda}|\grad W_\lambda|^2\ud x
      = \int_{|x| \geq R/\sqrt\lambda}\|\grad W\|^2\ud x \\
      &\lesssim \int_{R/\sqrt\lambda}^{+\infty}r^{-2N+2}\cdot r^{N-1}\ud r \sim (R/\sqrt\lambda)^{-N+2}.
    \end{aligned}
  \end{equation*}

  We see that $\zeta(\lambda) \sim R^{-(N-2)}$ and $\zeta'(\lambda) \sim R^{-N}$ when $\lambda$ is small,
  which proves \eqref{eq:V-Linf} and \eqref{eq:lV-Linf}.

  On the support of $\partial_\lambda V(\lambda)$ there holds $|\partial_\lambda V(\lambda)(x)| \lesssim \lambda^\frac{N-4}{2}|x|^{-N+2}$, hence
  \begin{equation*}
    \begin{aligned}
      \|\partial_\lambda V(\lambda)\|_{L^\frac{2N}{N+2}}^\frac{2N}{N+2} &\lesssim \int_0^{R\sqrt\lambda}\lambda^{\frac{N-4}{2}\cdot\frac{2N}{N+2}}r^{(-N+2)\frac{2N}{N+2}}r^{N-1}\ud r \\
      &= \lambda^{\frac{N^2-4N}{N+2}}\int_0^{R\sqrt\lambda}r^\frac{-N^2 + 5N - 2}{N+2}\ud r = R^\frac{N(6-N)}{N+2}\lambda^\frac{N(N-2)}{2(N+2)}.
    \end{aligned}
  \end{equation*}
  This proves \eqref{eq:lV-H-1}.

  We will check \eqref{eq:lV-H1-s} separately in each dimension. For $N = 3$ we have $|\partial_\lambda V(\lambda)(x)| \lesssim \lambda^{-\frac 12}|x|^{-1}$
  and $\||x|^{-1}\|_{L^2(|x| \leq R\sqrt\lambda)} \ll 1$. For $N = 4$ we have $|\partial_\lambda V(\lambda)(x)| \lesssim |x|^{-2}$.
  We suppose $s > 1$, hence there exists $q \in (1, 2)$ such that $L^q \subset H^{1-s}$ and it is easy to check that $\||x|^{-2}\|_{L^q(|x| \leq R\sqrt\lambda)} \ll 1$.
  Finally for $N = 5$ we have $|\partial_\lambda V(\lambda)(x)| \lesssim \lambda^\frac 12 |x|^{-3}$. There exists $q\in \big(1, \frac 53\big)$ such that $L^q \subset H^{1-s}$
  and it is easy to check that $\||x|^{-3}\|_{L^q(|x| \leq R\sqrt\lambda)} \ll 1$.
\end{proof}
Note that $\zeta(\lambda) \sim c_0 c^*$, which means that the cut-off is made at a radius $r = R\sqrt \lambda$ such that $W_{\lambda}(r) \sim c_0 u^*(t, r)$.

For the next lemma we will need the following version of the Implicit Function Theorem.
It is obtained directly from standard proofs of the usual version, see for example \cite[Section 2.2]{chow-hale}.
\begin{lemma}
  \label{lem:implicit}
  Suppose that $X$, $Y$ and $Z$ are Banach spaces, $x_0 \in X$, $y_0 \in Y$, $\rho, \eta > 0$ and $\Phi: B(x_0, \rho) \times B(y_0, \eta) \to Z$
  is continuous in $x$ and continuously differentiable in $y$,
  $\Phi(x_0, y_0) = 0$ and $\vD_y \Phi(x_0, y_0) =: L_0$ has a bounded inverse.
  Suppose that
  \begin{align}
    \|L_0 - \vD_y \Phi(x, y)\|_Z \leq \frac 13 \|L_0^{-1}\|_{\scrL(Z, Y)}^{-1}\qquad &\text{for }\|x-x_0\|_X < \rho, \|y-y_0\|_Y < \eta, \label{eq:implicit-1} \\
    \|\Phi(x, y_0)\|_Z \leq \frac{\eta}{3} \|L_0^{-1}\|_{\scrL(Z, Y)}^{-1}\qquad &\text{for }\|x-x_0\|_X < \rho. \label{eq:implicit-2}
  \end{align}
  Then there exists $y\in C(B(x_0, \rho), B(y_0, \eta))$ such that for $x \in B(x_0, \rho)$, $y(x)$ is the unique solution of the equation $\Phi(x, y(x)) = 0$ in $B(y_0, \eta)$. \qed
\end{lemma}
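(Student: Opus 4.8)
I would prove Lemma~\ref{lem:implicit} by the standard contraction-mapping argument for the implicit function theorem (as in \cite[Section~2.2]{chow-hale}), keeping track of constants. For each fixed $x \in B(x_0,\rho)$ I would introduce the map $T_x \colon \overline{B}(y_0,\eta) \to Y$ defined by $T_x(y) := y - L_0^{-1}\Phi(x,y)$, so that a point $y \in B(y_0,\eta)$ solves $\Phi(x,y) = 0$ precisely when $T_x(y) = y$. The first step is to check that $T_x$ is a $\tfrac13$-contraction on the closed ball: since the ball is convex and $\Phi$ is $C^1$ in $y$, the fundamental theorem of calculus gives
\[ T_x(y_1) - T_x(y_2) = L_0^{-1}\!\int_0^1 \big(L_0 - \vD_y\Phi(x, y_2 + \theta(y_1 - y_2))\big)(y_1 - y_2)\ud\theta, \]
and \eqref{eq:implicit-1} immediately yields $\|T_x(y_1) - T_x(y_2)\|_Y \leq \tfrac13\|y_1 - y_2\|_Y$.

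The second step is the self-mapping property $T_x(\overline{B}(y_0,\eta)) \subset \overline{B}(y_0,\eta)$. Splitting $T_x(y) - y_0 = (T_x(y) - T_x(y_0)) + (T_x(y_0) - y_0)$, the first term is controlled by $\tfrac13\|y - y_0\|_Y \leq \tfrac{\eta}{3}$ via Step~1, and the second, equal to $-L_0^{-1}\Phi(x,y_0)$, by $\tfrac{\eta}{3}$ via \eqref{eq:implicit-2}; hence $T_x$ in fact maps into $\overline{B}(y_0,\tfrac{2\eta}{3})$. The Banach fixed point theorem then provides a unique fixed point $y(x) \in \overline{B}(y_0,\eta)$, which therefore lies in the open ball $B(y_0,\eta)$, and the contraction estimate gives at once the asserted uniqueness of solutions of $\Phi(x,y) = 0$ in $B(y_0,\eta)$.

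The last step is continuity of $x \mapsto y(x)$, and this is the only place requiring slight care, since $\Phi$ is only assumed continuous in $x$ for fixed $y$. I would write, for $x, x' \in B(x_0,\rho)$,
\[ y(x') - y(x) = \big(T_{x'}(y(x')) - T_{x'}(y(x))\big) + \big(T_{x'}(y(x)) - T_x(y(x))\big); \]
the first term has norm at most $\tfrac13\|y(x') - y(x)\|_Y$ by the contraction bound for $T_{x'}$, the second equals $L_0^{-1}\big(\Phi(x,y(x)) - \Phi(x',y(x))\big)$, and absorbing the first term yields $\|y(x') - y(x)\|_Y \leq \tfrac32\|L_0^{-1}\|_{\scrL(Z,Y)}\|\Phi(x,y(x)) - \Phi(x',y(x))\|_Z$, whose right-hand side tends to $0$ as $x' \to x$. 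The crucial point is that the comparison is made with $T_{x'}$ evaluated at the \emph{fixed} argument $y(x)$, so that only continuity of $\Phi$ in the $x$-variable is needed.

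I do not anticipate any genuine obstacle here: apart from the bookkeeping that keeps the fixed point inside the open ball (which is why the factor $\tfrac23$ in Step~2 matters) and the choice of comparison point in Step~3, the proof is entirely routine. The delicate parts of the paper — controlling the negative eigendirections of $\vD^2 E(\bs W)$ and handling Step~2 of the outline — are unrelated to this lemma, which serves only as a convenient quantitative tool in the modulation construction.
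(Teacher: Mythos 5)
Your argument is correct and is exactly the standard quantitative contraction-mapping proof that the paper alludes to (it does not give a proof; it simply cites \cite[Section~2.2]{chow-hale} and states the result, so your write-up is the proof the author had in mind). All three steps — the $\tfrac13$-Lipschitz bound via \eqref{eq:implicit-1} and the fundamental theorem of calculus, the self-mapping estimate $\tfrac{\eta}{3}+\tfrac{\eta}{3}=\tfrac{2\eta}{3}$ combining the contraction constant with \eqref{eq:implicit-2}, and the continuity estimate $\|y(x')-y(x)\|_Y\le\tfrac32\|L_0^{-1}\|_{\scrL(Z,Y)}\|\Phi(x,y(x))-\Phi(x',y(x))\|_Z$ by comparing $T_{x'}$ at the frozen point $y(x)$ — are exactly right, and the last one correctly exploits that only continuity of $\Phi$ in $x$ is assumed.

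One small imprecision worth fixing: you declare $T_x$ on the \emph{closed} ball $\overline{B}(y_0,\eta)$, but $\Phi$ is only given on the open ball $B(y_0,\eta)$, so $T_x$ is not a priori defined on the boundary sphere. The clean fix (which your Step~2 already provides) is to run the Banach fixed point theorem on the complete metric space $\overline{B}(y_0,2\eta/3)\subset B(y_0,\eta)$, where $T_x$ is defined and maps into itself; the unique fixed point then automatically lies in $B(y_0,\eta)$, and uniqueness in the full open ball $B(y_0,\eta)$ follows directly from the contraction inequality applied to any two purported solutions there. With that adjustment the proof is complete.
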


\begin{lemma}
  \label{lem:loi-mod}
  There exists $\delta_0 > 0$ and $\lambda_0 > 0$ such that for any $0 \leq \delta \leq \delta_0$ and $t_1 < t_2$,
  if $\bs u: (t_1, t_2) \to \cE$ is a solution of \eqref{eq:nlw} satisfying for all $t \in (t_1, t_2)$:
  \begin{equation}
    \label{eq:loi-mod-close}
    \|\bs u(t) - \bs u^*(t) - \bs W_{\wt\lambda(t)}\|_\cE \leq \delta,\qquad 0 < \wt\lambda(t) < \lambda_0,
  \end{equation}
  then there exists a unique function $\lambda(t) \in C^1((t_1, t_2), (0, +\infty))$ such that
  \begin{equation}
    \label{eq:loi-g}
  \bs g(t) := \bs u(t) - \bs u^*(t) - \bs V(\lambda(t))
  \end{equation}
  satisfies for all $t \in (t_1, t_2)$:
  \begin{align}
    \la\cZ_\uln{\lambda(t)}, g(t)\ra &= 0, \label{eq:loi-gorth} \\
    \|\bs g(t)\|_\cE &\lesssim \delta + \wt \lambda(t)^\frac{N-2}{4}, \label{eq:loi-gbound} \\
    |\lambda(t)/\wt \lambda(t) - 1| &\lesssim \delta, \label{eq:loi-lambda-range} \\
    |\lambda'(t)| &\lesssim \|\bs g(t)\|_\cE. \label{eq:loi-mod}
  \end{align}
\end{lemma}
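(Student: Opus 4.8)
The plan is to produce $\lambda(t)$, for each fixed $t$, by solving the \emph{scalar} orthogonality equation \eqref{eq:loi-gorth} through the implicit function theorem in the form of Lemma~\ref{lem:implicit}, using the given $\wt\lambda(t)$ as the initial guess, and then to promote the resulting function $t\mapsto\lambda(t)$ from continuous to $C^1$ by differentiating \eqref{eq:loi-gorth}. Introduce
\[
  \Phi(\bs v, \mu) := \la \cZ_\uln\mu,\ v - V(\mu)\ra,\qquad \bs v = (v,\dot v)\in\cE,\ \mu>0,
\]
so that \eqref{eq:loi-gorth}, with $\bs g$ defined by \eqref{eq:loi-g}, reads $\Phi\big(\bs u(t)-\bs u^*(t),\,\lambda(t)\big)=0$. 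First I would apply Lemma~\ref{lem:implicit} at each $t$ with $X=\cE$, $Y=Z=\bR$, $x_0=\bs u(t)-\bs u^*(t)$ and $y_0=\wt\lambda(t)$.

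The computation of $\vD_\mu\Phi$ is the heart of the matter. Differentiating, using \eqref{eq:Vl} together with the fact that $\supp\cZ_\uln\mu\subset\{|x|<R\sqrt\mu\}$ once $\mu$ is small ($R$ being the fixed constant \eqref{eq:R}), and using the exact scaling identity $\la\cZ_\uln\mu,(\Lambda W)_\uln\mu\ra=\la\cZ,\Lambda W\ra$, one finds that $\vD_\mu\Phi(\bs v,\mu)$ equals the fixed positive number $\la\cZ,\Lambda W\ra$ from \eqref{eq:Z}, up to two error terms: a contribution $O(R^{-N}\mu^{N/2})$ coming from $\zeta'(\mu)$ via \eqref{eq:ustar-zeta}, and the term $-\mu^{-1}\la(\Lambda_0\cZ)_\uln\mu,\,v-V(\mu)\ra$. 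Since the dotted rescaling is $L^2$-critical one has $\|(\Lambda_0\cZ)_\uln\mu\|_{L^{2N/(N+2)}}\sim\mu$, while $\|v-V(\mu)\|_{\dot H^1}\lesssim\delta+\wt\lambda^{\frac{N-2}{4}}+|\mu/\wt\lambda-1|$ on the relevant ball by \eqref{eq:loi-mod-close}, \eqref{eq:V-W} and the triangle inequality; hence this term is $O(\delta_0+\lambda_0^{\frac{N-2}{4}})$. Therefore \eqref{eq:implicit-1} holds with $\|L_0^{-1}\|\approx\la\cZ,\Lambda W\ra^{-1}$ once $\delta_0,\lambda_0$ are small enough. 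For \eqref{eq:implicit-2} I would estimate $\Phi(x_0,\wt\lambda(t))$: using that $\bs u-\bs u^*$ is within $\delta$ of $\bs W_{\wt\lambda}$ and that $W_{\wt\lambda}-V(\wt\lambda)$ equals the constant $\zeta(\wt\lambda)$ on $\supp\cZ_\uln{\wt\lambda}$, one gets $|\Phi(x_0,\wt\lambda(t))|\lesssim\wt\lambda\,\delta$ up to a harmless $O(\wt\lambda^{N/2})$ cutoff contribution (which in fact vanishes if one additionally normalizes $\int\cZ=0$, compatibly with \eqref{eq:Z}). So Lemma~\ref{lem:implicit} applies with $\eta\sim\wt\lambda(t)\,\delta$.

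This yields a unique $\lambda(t)$ solving \eqref{eq:loi-gorth} with $|\lambda(t)-\wt\lambda(t)|\le\eta\lesssim\wt\lambda(t)\,\delta$, i.e. \eqref{eq:loi-lambda-range}; uniqueness together with continuity of $t\mapsto\bs u(t)-\bs u^*(t)$ in $\cE$ gives $\lambda\in C^0$. For \eqref{eq:loi-gbound} I would split $\bs g=(\bs u-\bs u^*-\bs W_{\wt\lambda})+(\bs W_{\wt\lambda}-\bs W_\lambda)+(\bs W_\lambda-\bs V(\lambda))$ and bound the three pieces by $\delta$ (from \eqref{eq:loi-mod-close}), by $\|\bs W_{\wt\lambda}-\bs W_\lambda\|_\cE\lesssim|\lambda/\wt\lambda-1|\lesssim\delta$, and by $R^{\frac{-N+2}{2}}\lambda^{\frac{N-2}{4}}\lesssim\wt\lambda^{\frac{N-2}{4}}$ via \eqref{eq:V-W}. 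Finally, for the $C^1$ regularity and \eqref{eq:loi-mod}: since $\bs u$ and $\bs u^*$ are $C^1$ with values in $\cF$ by the equation, and since $\|\cZ_\uln\lambda\|_{L^2}=\|\cZ\|_{L^2}$ is $\lambda$-independent, a standard difference-quotient argument applied to \eqref{eq:loi-gorth} shows $\lambda\in C^1$ with
\[
  \lambda'(t)\,\big(\la\cZ,\Lambda W\ra+O(\|\bs g(t)\|_\cE)\big)=-\la\cZ_\uln{\lambda(t)},\,\dot g(t)\ra,
\]
where $\dot g=\dot u-\dot u^*$ because $\bs V(\lambda)$ has vanishing second component. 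The parenthesis is bounded away from $0$ and $|\la\cZ_\uln\lambda,\dot g\ra|\le\|\cZ\|_{L^2}\|\bs g\|_\cE$, so \eqref{eq:loi-mod} follows.

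The hard part is really bookkeeping: one must track the $\mu$-dependence of every quantity so that the implicit function theorem applies \emph{uniformly} as $\wt\lambda\to0$, and the $L^2$-critical normalization of $\cZ_\uln\lambda$ is exactly what keeps the relevant norms $\mu$-bounded. A secondary annoyance is the $C^1$ regularity of $\lambda$, which forces one to read the flow in the weaker space $\cF$ (where $t\mapsto\bs u(t)$ is differentiable) rather than in $\cE$; but once the modulation identity above is written down, this step is routine.
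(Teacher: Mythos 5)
Your proof follows the same route as the paper's: apply Lemma~\ref{lem:implicit} to the scalar orthogonality functional at each fixed time to produce a continuous $\lambda(t)$, then differentiate \eqref{eq:loi-gorth} to obtain the modulation estimate \eqref{eq:loi-mod}. The paper makes a few choices that tidy up precisely the technicalities you flag: it works in the logarithmic variable $l=\log\lambda$ with $\Phi(\bs v;l)=\la\eee^{-l}\cZ_\uln{\eee^l},\,v-V(\eee^l)\ra$, so the $\lambda$-uniformity of the implicit-function constants is built into the parametrization; it anchors Lemma~\ref{lem:implicit} at $x_0=\bs V(\wt\lambda(t_0))$ rather than at $\bs u(t_0)-\bs u^*(t_0)$, which makes $\Phi(x_0,y_0)=0$ exactly and renders \eqref{eq:implicit-2} immediate, so no extra normalization such as $\int\cZ=0$ is needed to land on \eqref{eq:loi-lambda-range}; and, for the $C^1$ regularity of $\lambda$, instead of a difference-quotient argument in the weaker topology, it defines a candidate $l(t)$ by solving the ODE $l'=-(\partial_l\Phi)^{-1}(\vD_{\bs v}\Phi)\partial_t\bs v$ (well-posed because $\vD_{\bs v}\Phi$ acts continuously on $\cF$), checks $\dd t\Phi(\bs v(t);l(t))=0$, and identifies $l(t)=\log\lambda(t)$ via the uniqueness clause of Lemma~\ref{lem:implicit}. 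Your final computation of the modulation identity, including the identification $\dot g=\dot u-\dot u^*$ from the vanishing second component of $\bs V(\lambda)$, coincides with the paper's (modulo the harmless $\zeta'(\lambda)\la\cZ_\uln\lambda,1\ra=O(R^{-N}\lambda^{N/2})$ contribution, which you can absorb into the same error).
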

\begin{proof}
  We will first show that for $t_0 \in (t_1, t_2)$ fixed there exists a unique $\lambda(t_0)$ such that \eqref{eq:loi-gorth}, \eqref{eq:loi-gbound} and \eqref{eq:loi-lambda-range} hold at $t = t_0$.
  The proof is standard, see for example \cite[Proposition 1]{MaMe01}.
  
  Denote $\bs v_0 := \bs u(t_0) - \bs u^*(t_0)$ and $\wt l_0 := \log(\wt \lambda(t_0))$ (it will be convenient to consider $\wt \lambda(t_0)$ and $\lambda(t_0)$ in the logarithmic scale).
  We define the following functional:
  \begin{equation*}
    \Phi: \cE\times \bR\to \bR,\qquad \Phi(\bs v; l) := \la \eee^{-l}\cZ_\uln{\eee^l}, v - V(\eee^l)\ra.
  \end{equation*}
  We have
  \begin{equation*}
    \partial_l \Phi(\bs v; l) = -\la \cZ_\uln{\eee^l}, \partial_\lambda V(\eee^l)\ra - \la \eee^{-l}\Lambda_{-1}\cZ_{\uln\eee^l}, v - V(\eee^l)\ra.
  \end{equation*}

  We apply Lemma~\ref{lem:implicit} with $x_0 = \bs V(\wt \lambda(t_0))$ and $y_0 = \wt l_0$.
  It is easily checked that the assumptions hold if $\delta$ is small and $\eta = C\delta$, with a large constant $C$.
  Take $\lambda(t_0) = \eee^{l_0}$, where $l_0$ is the solution of $\Phi(\bs v_0; l_0) = 0$ given by Lemma~\ref{lem:implicit}.
  Directly from the definition of $\Phi$ we obtain \eqref{eq:loi-gorth}.
  The inequality $|l_0 - \wt l_0| \leq \eta = C\delta$ is equivalent to \eqref{eq:loi-lambda-range}, which in turn implies
  \begin{equation}
    \label{eq:var-norm-sol}
    \|W_{\wt \lambda(t_0)} - W_{\lambda(t_0)}\|_{\dot H^1} \lesssim \delta.
  \end{equation}
  From the definition of $\bs g$ and \eqref{eq:loi-mod-close} we have
  \begin{equation*}
    \|\bs g\| \leq \delta + \|W_{\wt \lambda(t_0)} - W_{\lambda(t_0)}\|_{\dot H^1} + \|W_{\lambda(t_0)} - V(\lambda(t_0))\|_{\dot H^1},
  \end{equation*}
  so \eqref{eq:loi-gbound} follows from \eqref{eq:var-norm-sol} and \eqref{eq:V-W}.

  For each $t_0 \in (t_1, t_2)$ we have defined $\lambda(t_0)$. It remains to show that $\lambda(t)$ is a $C^1$ function and that \eqref{eq:loi-mod} holds.
  One way is to use a regularization procedure as in \cite{MaMe01}. Here we give a different argument, which might be simpler in some cases.

  Take $t_0 \in (t_1, t_2)$ and let $l_0 := \log(\lambda(t_0))$.
  Denote $\bs v(t) := \bs u(t) - \bs u^*(t)$ and define $l: (t_0-\varepsilon, t_0+\varepsilon) \to \bR$ as the solution of the differential equation
  \begin{equation}
    \label{eq:lambda-der}
    l'(t) = -(\partial_l \Phi)^{-1}(\vD_{\bs v}\Phi)\partial_t \bs v(t)
  \end{equation}
  with the initial condition $l(t_0) = l_0$.
  Notice that $\vD_{\bs v}\Phi$ is a continuous functional on $\cF$, so we can apply it to $\partial_t \bs v(t)$.

  Using the chain rule we get $\dd t \Phi(\bs v(t); l(t)) = 0$ for $t \in (t_0 - \varepsilon, t_0 + \varepsilon)$.
  By continuity, $|l(t) - l_0| <\eta = C\delta$ in some neighbourhood of $t = t_0$.
  Hence, by the uniqueness part of Lemma~\ref{lem:implicit}, we get $l(t) = \log \lambda(t)$ in some neighbourhood of $t = t_0$.
  In particular, $\lambda(t)$ is of class $C^1$ in some neighbourhood of $t_0$.

  From \eqref{eq:loi-g} we obtain the following differential equation for the error term $\bs g$:
  \begin{equation}
    \label{eq:loi-dtg}
    \partial_t \bs g = J\circ(\vD E(\bs V(\lambda) + \bs u^* + \bs g) - \vD E(\bs u^*)) - \lambda'\partial_\lambda \bs V(\lambda),
  \end{equation}
  which can also be written in the expanded form
  \begin{equation}
    \label{eq:loi-dtg-2}
    \bigg\{
    \begin{aligned}
      \partial_t g &= \dot g - \lambda'\partial_\lambda V(\lambda), \\
      \partial_t \dot g &= \Delta g + \big(f(u^* + V(\lambda) + g) - f(u^*) - f(V(\lambda))\big) + \big(\Delta V(\lambda) + f(V(\lambda))\big).
    \end{aligned}
  \end{equation}
  Differentiating \eqref{eq:loi-gorth} and using the first equation in \eqref{eq:loi-dtg-2} we get
  \begin{equation*}
    \begin{aligned}
    0 &= \dd t\la \cZ_\uln\lambda, g\ra = -\frac{\lambda'}{\lambda}\la \Lambda_0\cZ_\uln\lambda, g\ra + \la \cZ_\uln\lambda, \dot g - \lambda'\partial_\lambda V(\lambda)\ra \\
    &= \lambda'\big(\la \cZ, \Lambda W\ra - \la \cZ_\uln\lambda, \Lambda W_\uln\lambda + \partial_\lambda V(\lambda)\ra - \big\la\frac{1}{\lambda}\Lambda_0\cZ_\uln\lambda, g\big\ra\big)
    + \la \cZ_\uln\lambda, \dot g\ra.
  \end{aligned}
  \end{equation*}
  We assumed that $\la \cZ, \Lambda W\ra > 0$. When $\|\bs g\|_\cE$ and $\lambda$ are small enough, then
  \begin{equation*}
    \big|\la \cZ_\uln\lambda, \Lambda W_\uln\lambda + \partial_\lambda V(\lambda)\ra + \big\la\frac{1}{\lambda}\Lambda_0\cZ_\uln\lambda, g\big\ra\big| \leq \frac 12 \la \cZ, \Lambda W\ra
  \end{equation*}
  (we use \eqref{eq:lV-Linf} in order to estimate the first term). This proves \eqref{eq:loi-mod}.
\end{proof}

If $\bs u(t)$ is a solution of \eqref{eq:nlw} satisfying \eqref{eq:loi}, then there exists $t_0$ such that \eqref{eq:loi-mod-close}
holds for $t \in [t_0, T_+)$. It follows from \eqref{eq:loi-lambda-range} that, while proving Theorem~\ref{thm:loi},
without loss of generality we can assume that $\lambda(t)$ is the function given by Lemma~\ref{lem:loi-mod}.
From \eqref{eq:loi-gbound} we obtain that $\|\bs g\|_\cE \to 0$ as $t \to T_+$, which is the only information about $\bs g$ used in the sequel.
The precise form of the right hand side of \eqref{eq:loi-gbound} has no importance.
We will prove that \eqref{eq:loi-borne} holds on some interval $[t_0, T_+)$ with $t_0 < T_+$, with no information about the length of this interval.
Each time we state something for $t \in [t_0, T_+)$ it should be understood that $t_0$ is sufficiently close to $T_+$.

In the rest of this paper $\lambda(t)$ always stands for the modulation parameter obtained in Lemma~\ref{lem:loi-mod} and $\bs g(t)$ is the function defined by \eqref{eq:loi-g}.
We introduce the following notation for the joint size of the error and the interaction:
$$n(\bs g, \lambda) := \sqrt{\|\bs g\|_\cE^2 + c^*\lambda^\frac{N-2}{2}}.$$

We will now analyze the stable and unstable directions of the linearized flow. The stable coefficient $a^-(t)$ and the unstable coefficient $a^+(t)$ are defined as follows:
\begin{equation*}
    a^-(t) := \la \alpha_{\lambda(t)}^-, \bs g(t)\ra, \qquad a^+(t) := \la \alpha_{\lambda(t)}^+, \bs g(t)\ra.
\end{equation*}
Note that $|a^-(t)| \lesssim \|\bs g\|_\cE$ and $|a^+(t)| \lesssim \|\bs g\|_\cE$.
\begin{lemma}
  \label{lem:amp}
  The functions $a^-(t)$ and $a^+(t)$ satisfy
  \begin{align}
    \big|\dd t a^-(t) + \frac{\nu}{\lambda(t)}a^-(t)\big| &\lesssim \frac{1}{\lambda(t)}n(\bs g(t), \lambda(t))^2,\label{eq:loi-alpham}\\
    \big|\dd t a^+(t) - \frac{\nu}{\lambda(t)}a^+(t)\big| &\lesssim \frac{1}{\lambda(t)}n(\bs g(t), \lambda(t))^2.\label{eq:loi-alphap}
  \end{align}
\end{lemma}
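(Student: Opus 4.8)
The plan is to differentiate the scalar quantities $a^\pm(t) = \la \alpha^\pm_{\lambda(t)}, \bs g(t)\ra$, to isolate from the evolution equation \eqref{eq:loi-dtg} the linearized term $J\circ \vD^2 E(\bs W_{\lambda})\bs g$ --- which by \eqref{eq:eigencovectl} reproduces exactly the factors $\pm\frac{\nu}{\lambda}a^\pm$ --- and to bound everything else by $\frac1\lambda n(\bs g,\lambda)^2$. Concretely, writing $L_\lambda g := -\Delta g - f'(W_\lambda)g$ so that $J\circ\vD^2 E(\bs W_\lambda)\bs g = (\dot g, -L_\lambda g)$, I would recast \eqref{eq:loi-dtg-2} as
\begin{equation*}
  \partial_t \bs g = J\circ\vD^2 E(\bs W_\lambda)\bs g + (0, h) - \lambda'\bigl(\partial_\lambda V(\lambda),\, 0\bigr),\qquad
  h := \bigl(\Delta V(\lambda) + f(V(\lambda))\bigr) + \bigl(f(u^* + V(\lambda) + g) - f(u^*) - f(V(\lambda)) - f'(W_\lambda)g\bigr),
\end{equation*}
where $h$ collects the cut-off defect, the interaction of $\bs u^*$ with $\bs V(\lambda)$, and the superquadratic part of the nonlinearity. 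Differentiating $a^\pm$ by the chain rule and using \eqref{eq:eigencovectl}, the scaling identity $\partial_\lambda\alpha^\pm_\lambda = O_{\cE^*}(1/\lambda)$ (a one-line computation from \eqref{eq:Yal}) and the formulas $\la\alpha^\pm_\lambda, (0,\phi)\ra = \pm\tfrac12\la\cY_\uln\lambda, \phi\ra$, $\la\alpha^\pm_\lambda,(\psi,0)\ra = \tfrac{\nu}{2\lambda}\la\cY_\uln\lambda,\psi\ra$, I obtain
\begin{equation*}
  \Bigl|\dd t a^\pm \mp \tfrac\nu\lambda a^\pm\Bigr| \lesssim \frac{|\lambda'|}{\lambda}\|\bs g\|_\cE + \bigl|\la\cY_\uln\lambda, h\ra\bigr| + \frac{|\lambda'|}{\lambda}\bigl|\la\cY_\uln\lambda, \partial_\lambda V(\lambda)\ra\bigr|.
\end{equation*}
By \eqref{eq:loi-mod} the first term is $\lesssim \frac1\lambda\|\bs g\|_\cE^2$, so it remains to treat the other two.

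For the $\partial_\lambda V$-term the key point is the orthogonality $\la\cY, \Lambda W\ra = 0$ from \eqref{eq:YLW}: by \eqref{eq:Vl}, $\partial_\lambda V(\lambda) = -(\Lambda W)_\uln\lambda - \zeta'(\lambda)$ on $\{|x| < R\sqrt\lambda\}$ and vanishes outside, while $\cY_\uln\lambda$ is exponentially concentrated at scale $\lambda \ll R\sqrt\lambda$; hence $\la\cY_\uln\lambda, (\Lambda W)_\uln\lambda\ra = \la\cY, \Lambda W\ra = 0$, and $|\la\cY_\uln\lambda, \partial_\lambda V(\lambda)\ra| \lesssim |\zeta'(\lambda)|\,\lambda^{N/2} + e^{-cR/\sqrt\lambda} \lesssim \lambda^{N/2}$ for $\lambda$ small. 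Combined with $|\lambda'| \lesssim \|\bs g\|_\cE$ and the Young inequality $\lambda^{(N-2)/2}\|\bs g\|_\cE \lesssim \|\bs g\|_\cE^2 + \lambda^{N-1}$, together with $\lambda^{N-1} \lesssim c^*\lambda^{(N-2)/2}$, this term is $\lesssim \frac1\lambda n(\bs g,\lambda)^2$.

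The main work is the bound $|\la\cY_\uln\lambda, h\ra| \lesssim \frac1\lambda n(\bs g,\lambda)^2$, obtained by splitting $h$ into the parts free, linear and superquadratic in $\bs g$. On $\{|x|<R\sqrt\lambda\}$ one has $V(\lambda) = W_\lambda - \zeta(\lambda)$ with $\zeta(\lambda) \sim R^{-(N-2)} \sim c_0 c^*$ (by \eqref{eq:R}--\eqref{eq:ustar-zeta}), $\|u^*\|_{L^\infty} \lesssim c^*$, and $\Delta W_\lambda + f(W_\lambda) = 0$; hence the $\bs g$-free part $(\Delta V(\lambda) + f(V(\lambda))) + (f(u^*+V(\lambda)) - f(u^*) - f(V(\lambda)))$ is pointwise $\lesssim c^* f'(W_\lambda)$ there (plus a sphere-supported term) and zero outside. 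Pairing with $\cY_\uln\lambda$, discarding the exponentially small outer contribution, rescaling by $\lambda$ and using $f'(W_\lambda(x)) = \lambda^{-2}f'(W)(x/\lambda)$ with $\int|\cY|f'(W)<\infty$ gives $\lesssim c^*\lambda^{(N-4)/2} = \frac1\lambda c^*\lambda^{(N-2)/2}$. The part linear in $\bs g$ has coefficient $f'(u^*+V(\lambda)) - f'(W_\lambda)$, controlled by $c^* f''(W_\lambda)$ up to lower-order terms; Hölder with a Sobolev-critical pair, the rescaling $f''(W_\lambda(x)) = \lambda^{-(6-N)/2}f''(W)(x/\lambda)$ and finiteness of the resulting weighted $\cY$-integrals give $\lesssim c^*\lambda^{(N-4)/2}\|\bs g\|_\cE$, which Young's inequality splits as $\frac1\lambda\|\bs g\|_\cE^2$ plus a term $\lesssim c^*\lambda^{(N-4)/2}$. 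Finally the superquadratic part is pointwise $\lesssim \big((c^*)^{\frac{6-N}{N-2}} + f''(W_\lambda)\big)|g|^2 + |g|^{\frac{N+2}{N-2}}$, and Hölder with the exponents $(\tfrac N2,\tfrac N{N-2})$ and $(\tfrac{2N}{N-2},\tfrac{2N}{N+2})$, together with $\||\cY_\uln\lambda|\,f''(W_\lambda)\|_{L^{N/2}} \sim \lambda^{-1}$, $\|\cY_\uln\lambda\|_{L^{2N/(N-2)}} \sim \lambda^{-1}$ and $\|\bs g\|_\cE \to 0$, give $\lesssim \frac1\lambda\|\bs g\|_\cE^2$. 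Summing these estimates yields the claim.

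The hardest part is this last step. Three points need care: first, recognising that the factor $\lambda^{-1}$ on the right-hand side is precisely what is produced when $\cY_\uln\lambda$ --- normalised in $L^2$ but concentrated at scale $\lambda$ --- is paired against weights such as $f''(W_\lambda)$ concentrated at the same scale, so the rescaling bookkeeping has to be carried out with the critical exponents exactly; second, verifying in each dimension $N\in\{3,4,5\}$ that the contributions from the annulus $\{\lambda\lesssim|x|\lesssim R\sqrt\lambda\}$ and from $\{|x|>R\sqrt\lambda\}$ (including the singular term of $\Delta V(\lambda)$ supported on $\{|x|=R\sqrt\lambda\}$) are genuinely negligible, which uses the exponential decay of $\cY$; and third, noting that the term linear in $\bs g$, with coefficient $f'(u^*+V(\lambda))-f'(W_\lambda)$, is not small by itself and genuinely has to be absorbed into $\frac1\lambda n(\bs g,\lambda)^2$ via Young's inequality --- it is the presence of this interaction term that forces $n(\bs g,\lambda)$, rather than $\|\bs g\|_\cE$, to appear in the statement.
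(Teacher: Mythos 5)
Your proof is correct and takes essentially the same route as the paper: differentiate $a^\pm$, extract the eigenvalue relation \eqref{eq:eigencovectl}, and bound the remaining modulation, cutoff, interaction and nonlinear error terms by $\frac1\lambda\,n(\bs g,\lambda)^2$ via the orthogonality $\la\cY,\Lambda W\ra=0$, the decay of $\cY$, and the scaling of $f'(W_\lambda)$, $f''(W_\lambda)$. The one cosmetic difference is in the treatment of $\Delta V(\lambda)$ near $|x|=R\sqrt\lambda$: you discard the sphere-supported singular part using the exponential decay of $\cY_\uln\lambda$ at that radius, whereas the paper integrates by parts twice and invokes $\|\Delta\cY_\lambda\|_{L^1}\lesssim\lambda^{(N-2)/2}$ together with \eqref{eq:V-Linf} — both are valid.
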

\begin{proof}
  We will only prove \eqref{eq:loi-alphap}; the other estimate can be shown analogously.

  Let us rewrite equation \eqref{eq:loi-dtg-2} in the following manner:
  \begin{equation*}
    \partial_t \bs g = J\circ \vD^2 E(\bs W_\lambda)\bs g + \bs h,
  \end{equation*}
  where
  \begin{equation*}
    \bs h = \begin{pmatrix}h \\ \dot h\end{pmatrix} = \begin{pmatrix}
      -\lambda'\partial_\lambda V(\lambda), \\
      \big(f(u^* + V(\lambda) + g) - f(u^*) - f(V(\lambda)) - f'(W_\lambda)g\big) + \big(\Delta V(\lambda) + f(V(\lambda))\big)\end{pmatrix}.
  \end{equation*}
  Using \eqref{eq:eigencovect} we get
  \begin{equation*}
    \begin{aligned}
      \dd t a^-(t) + \frac{\nu}{\lambda}a^-(t) &= \dd t\la \alpha_\lambda^-, \bs g\ra + \frac{\nu}{\lambda}\la \alpha_\lambda^-, \bs g\ra \\ &= -\frac{\lambda'}{\lambda}\la \Lambda_{\cE^*}\alpha_\lambda^-, \bs g\ra + \la \alpha_\lambda^-, J\circ \vD^2 E(\bs W_\lambda)\bs g\ra + \frac{\nu}{\lambda}\la \alpha_\lambda^-, \bs g\ra + \la\alpha_\lambda^-, \bs h\ra \\ &= -\frac{\lambda'}{\lambda}\la \Lambda_{\cE^*}\alpha_\lambda^-, \bs g\ra+ \la\alpha_\lambda^-, \bs h\ra.
  \end{aligned}
  \end{equation*}
  The first term is negligible due to \eqref{eq:loi-mod}. In order to bound the second term it suffices to check the following inequalities:
  \begin{align}
    \big|\big\la\cY_\uln\lambda, \partial_\lambda V(\lambda)\big\ra\big| &\lesssim n(\bs g, \lambda)^2, \label{eq:alpha-err-1} \\
    \big|\big\la\cY_\lambda, \big(\Delta V(\lambda) + f(V(\lambda))\big)\big\ra\big| &\lesssim n(\bs g, \lambda)^2, \label{eq:alpha-err-2} \\
    \big|\big\la\cY_\lambda, \big(f(u^* + V(\lambda) + g) - f(u^*) - f(V(\lambda)) - f'(W_\lambda)g\big) \big\ra\big| &\lesssim n(\bs g, \lambda)^2. \label{eq:alpha-err-3}
  \end{align}
  The first inequality follows from \eqref{eq:lV-Linf} and \eqref{eq:YLW}, since the region $|x| \geq R\sqrt\lambda$ is negligible due to exponential decay of $\cY$.

  Notice that $|f(W_\lambda) - f(V(\lambda))| \lesssim f'(W_\lambda)|\cdot |W_\lambda - V(\lambda)| \lesssim f'(W_\lambda)c_0c^*$,
  where the last inequality follows from \eqref{eq:V-W} and \eqref{eq:R}. Together with the fact that $\Delta(W_\lambda) + f(W_\lambda) = 0$ this implies
  \begin{equation*}
    \begin{aligned}
      \big|\la \cY_\lambda, \big(\Delta V(\lambda) + f(V(\lambda))\big)\big\ra\big| &\lesssim \big|\big\la \cY_\lambda, \Delta\big(W_\lambda - V(\lambda)\big)\big\ra\big| + \big|\big\la \cY_\lambda, f(W_\lambda) - f(V(\lambda))\big\ra\big| \\
      &\lesssim \big(\|\Delta Y_\lambda\|_{L^1} + \|f'(W_\lambda)\cY_\lambda\|_{L^1}\big)c_0 c^* \lesssim c^*\lambda^\frac{N-2}{2},
    \end{aligned}
  \end{equation*}
  which proves \eqref{eq:alpha-err-2}.

  We will check \eqref{eq:alpha-err-3} in three small steps. As before, we do not have to worry about the region $|x| \geq R\sqrt\lambda$ thanks to the fast decay of $\cY$.
  First, we have a pointwise bound 
  \begin{equation}
    \label{eq:pointwise-ustar-V}
      |f(u^* + V(\lambda)) - f(u^*) - f(V(\lambda))| \lesssim f'(W_\lambda)\cdot c^* + f(c^*),
  \end{equation}
  which implies
  \begin{equation}
    \label{eq:alpha-err-4}
    \big|\big\la \cY_\lambda, f(u^* + V(\lambda)) - f(u^*) - f(V(\lambda))\big\ra\big| \lesssim n(\bs g, \lambda)^2.
  \end{equation}
  Next, we have
  \begin{equation}
    \label{eq:pointwise-ustar-V-g}
  |f(u^* + V(\lambda) + g) - f(u^* + V(\lambda)) - f'(u^* + V(\lambda))g| \lesssim |f''(u^* + V(\lambda))|\cdot|g|^2 + f(|g|),
\end{equation}
  which implies
  \begin{equation}
    \label{eq:alpha-err-5}
    \big|\big\la \cY_\lambda, f(u^* + V(\lambda) + g) - f(u^* + V(\lambda)) - f'(u^* + V(\lambda))g\big\ra\big| \lesssim n(\bs g, \lambda)^2.
  \end{equation}
  Finally, $ |f'(V(\lambda) + u^*) - f'(W_\lambda)| \lesssim (|f''(W_\lambda)| + |f''(V(\lambda) + u^* - W_\lambda)|)\cdot|V(\lambda) + u^* - W_\lambda| \lesssim |f''(W_\lambda)|c^*$.
  Using H\"older and the fact that $\|\cY_\lambda\cdot f''(W_\lambda)\|_{L^\frac{2N}{N+2}} \lesssim \lambda^\frac{N-2}{2}$ this implies
  \begin{equation}
    \label{eq:alpha-err-6}
    \big|\big\la \cY_\lambda, \big(f'(u^* + V(\lambda)) - f'(W_\lambda)\big)g\big\ra\big| \ll n(\bs g, \lambda)^2.
  \end{equation}
  Now \eqref{eq:alpha-err-3} follows from \eqref{eq:alpha-err-4}, \eqref{eq:alpha-err-5} and \eqref{eq:alpha-err-6} and the triangle inequality.
\end{proof}

\subsection{Coercivity}
\label{ssec:coer}
By the conservation of energy, for all $t \in [t_0, T_+)$ there holds
\begin{equation}
  \label{eq:conservation}
  E(\bs V(\lambda) + \bs u^* + \bs g) = E(\bs W) + E(\bs u^*).
\end{equation}
On the other hand, using the pointwise inequality
\begin{equation*}
    |F(k+l) - F(k) - f(k)l - \frac 12 f'(k)l^2| \lesssim |f''(k)||l^3| + |F(l)|,\qquad \forall k, l \in \bR
  \end{equation*}
  we deduce that
 \begin{equation*}
    \begin{aligned}
      E(\bs V(\lambda) + \bs u^* + \bs g) &= E(\bs V(\lambda) + \bs u^*) + \la \vD E(\bs V(\lambda) + \bs u^*), \bs g\ra \\
      &+ \frac 12\la \vD^2 E(\bs V(\lambda) + \bs u^*)\bs g, \bs g\ra + O(\|\bs g\|_\cE^3).
    \end{aligned}
  \end{equation*}
  Using \eqref{eq:conservation} we obtain
  \begin{equation}
    \label{eq:loi-energy}
    \begin{aligned}
      \big(E(\bs V(\lambda) + \bs u^*)-E(\bs W) - E(\bs u^*)\big) &+ \la \vD E(\bs V(\lambda) + \bs u^*), \bs g\ra \\
      &+ \frac{1}{2}\la \vD^2 E(\bs V(\lambda) + \bs u^*)\bs g, \bs g\ra = O(\|\bs g\|_\cE^3).
    \end{aligned}
  \end{equation}
We start by computing the size of the first term on the left hand side.
\begin{lemma}
  \label{lem:loi-coer}
  For $T_+ - t$ small there holds
  \begin{equation}
    \label{eq:inter-bound}
    |E(\bs V(\lambda) + \bs u^*) - E(\bs W) - E(\bs u^*)| \lesssim c^*\lambda^\frac{N-2}{2}.
  \end{equation}
  In addition, if $u^*(0) < 0$, then
  \begin{equation}
    \label{eq:inter-positive}
    E(\bs V(\lambda) + \bs u^*) - E(\bs W) - E(\bs u^*) \gtrsim c^*\lambda^\frac{N-2}{2}.
  \end{equation}
\end{lemma}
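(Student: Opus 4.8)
The plan is to expand the energy around $\bs V(\lambda)$ and around $\bs u^*$ and to extract the leading term of the resulting interaction. Since the second component of $\bs V(\lambda)$ vanishes, a direct computation gives
\begin{equation*}
  E(\bs V(\lambda) + \bs u^*) - E(\bs V(\lambda)) - E(\bs u^*) = \int \grad V(\lambda)\cdot\grad u^*\ud x - \int\big(F(u^* + V(\lambda)) - F(V(\lambda)) - F(u^*)\big)\ud x ,
\end{equation*}
and both integrands are supported in $B := B(0, R\sqrt\lambda)$ (the first because $\grad V(\lambda)$ is, the second because $V(\lambda)$ vanishes outside $B$). For the term $E(\bs V(\lambda)) - E(\bs W) = E(\bs V(\lambda)) - E(\bs W_\lambda)$ I would use that $V(\lambda) = W_\lambda - \zeta(\lambda)$ on $B$, together with \eqref{eq:V-W}, \eqref{eq:ustar-zeta}, \eqref{eq:R} and the scaling identity $\int f(W_\lambda)\ud x = \big(\int f(W)\ud x\big)\lambda^\frac{N-2}{2}$; this gives $|E(\bs V(\lambda)) - E(\bs W)| \lesssim c_0 c^*\lambda^\frac{N-2}{2}$, which has the order of the expected answer but carries the small factor $c_0$. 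It remains to treat the two cross terms.

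For the gradient cross term I would integrate by parts over $B$, using $\grad V(\lambda) = \grad W_\lambda$ and $\Delta W_\lambda = -f(W_\lambda)$ there:
\begin{equation*}
  \int\grad V(\lambda)\cdot\grad u^*\ud x = \int_{\partial B}(\partial_r W_\lambda)\,u^*\ud\sigma + \int_B f(W_\lambda)\,u^*\ud x .
\end{equation*}
The decisive point is that, since $W_\lambda$ solves $\Delta W_\lambda + f(W_\lambda) = 0$ on all of $\bR^N$, the divergence theorem applied to $\grad W_\lambda$ on $B$ yields $\int_{\partial B}\partial_r W_\lambda\ud\sigma = -\int_B f(W_\lambda)\ud x$. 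Hence, writing $u^* = u^*(t,0) + \big(u^*(t,\cdot) - u^*(t,0)\big)$ in both integrals, the two constant contributions cancel exactly, and the gradient cross term reduces to $\int_{\partial B}(\partial_r W_\lambda)\big(u^*(t,\cdot) - u^*(t,0)\big)\ud\sigma + \int_B f(W_\lambda)\big(u^*(t,\cdot) - u^*(t,0)\big)\ud x$. For the nonlinear cross term I would use a pointwise Taylor bound together with $0 \le V(\lambda) \le W_\lambda$ on $B$ to write $F(u^* + V(\lambda)) - F(V(\lambda)) = f(V(\lambda))u^* + O\big(f'(W_\lambda)(c^*)^2 + (c^*)^\frac{2N}{N-2}\big)$ on $B$, and then replace $f(V(\lambda))$ by $f(W_\lambda)$ at the cost of a term $O\big(\zeta(\lambda)\,f'(W_\lambda)\,c^*\big)$. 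Combining, the two cross terms equal $-u^*(t,0)\int_B f(W_\lambda)\ud x$ plus a remainder; that is, they equal $-u^*(t,0)\big(\int f(W)\ud x\big)\lambda^\frac{N-2}{2}$ plus a remainder to be estimated.

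It then remains to check that the remainder is $\ll\lambda^\frac{N-2}{2}$ as $\lambda\to 0$ (apart from the contribution $O(c_0 c^*\lambda^\frac{N-2}{2})$ of $E(\bs V(\lambda)) - E(\bs W)$). For the pieces built from $u^*(t,\cdot) - u^*(t,0)$ I would use a Hölder bound $|u^*(t,x) - u^*(t,0)| \lesssim |x|^\gamma$ on $|x| \le 1$, uniform in $t$ near $T_+$, with some $\gamma \in (0,1)$; it follows from the Sobolev embedding ($s > \tfrac{N-2}{2}$ gives $X^s \hookrightarrow L^\infty$ and the associated Hölder estimate near the origin) and the uniform bound on $\|u^*(t)\|_{X^s}$ provided by the Cauchy theory. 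Combined with $\int f(W_\lambda)|x|^\gamma\ud x \sim \lambda^{\frac{N-2}{2}+\gamma}$ and, on $\partial B$, $|\partial_r W_\lambda| \sim R^{1-N}\lambda^{-\frac12}$, $|\partial B| \sim R^{N-1}\lambda^\frac{N-1}{2}$, these pieces are of size $\lesssim R^\gamma\lambda^{\frac{N-2}{2}+\frac\gamma2} \ll \lambda^\frac{N-2}{2}$. For the pieces carrying $f'(W_\lambda)(c^*)^2$, $\zeta(\lambda)f'(W_\lambda)c^*$ or $(c^*)^\frac{2N}{N-2}$, I would use the scaling estimates $\int_B f'(W_\lambda)\ud x \lesssim \lambda$ if $N = 3$, $\lesssim \lambda^2|\log\lambda|$ if $N = 4$, $\lesssim R\lambda^{5/2}$ if $N = 5$, as well as $|B| \sim R^N\lambda^{N/2}$; each is $\ll c^*\lambda^\frac{N-2}{2}$ after recalling $R^{-(N-2)} = c_0 c^*$. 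Since $|u^*(t,0)| \le 2c^*$, this proves \eqref{eq:inter-bound}. For \eqref{eq:inter-positive}, the hypothesis means $u^*(t,0) \to u^*_0(0) < 0$ as $t \to T_+$, so for $t$ close to $T_+$ the leading term is $\ge \tfrac12|u^*_0(0)|\big(\int f(W)\big)\lambda^\frac{N-2}{2}$; after shrinking $\rho$ so that $c^* \le 2|u^*_0(0)|$ and choosing $c_0$ small, this dominates both the remainder (for $\lambda$ small) and $|E(\bs V(\lambda)) - E(\bs W)|$, which gives \eqref{eq:inter-positive}.

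The main obstacle is the cancellation in the gradient cross term: the boundary integral $\int_{\partial B}(\partial_r W_\lambda)\,u^*\ud\sigma$ is on its own of size $c^*\lambda^\frac{N-2}{2}$, so one has to recognize that it cancels the bulk term $\int_B f(W_\lambda)\,u^*\ud x$ up to the harmless oscillation of $u^*$ near the origin; this is precisely what makes the sign of the interaction — hence the role of the sign of $u^*_0(0)$ — visible. Once this is in place, the rest is routine bookkeeping, the only slightly delicate point being the divergence of $\int f'(W)\ud x$ in dimensions $N \in \{4,5\}$, which is handled by the cutoff at radius $R\sqrt\lambda$ and the relation $R^{-(N-2)} = c_0 c^*$.
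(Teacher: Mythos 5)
Your argument is correct and reaches the same conclusion, but it is a genuine refinement of the paper's route rather than a reproduction of it. Both proofs start with the same integration by parts, converting $\int\grad V(\lambda)\cdot\grad u^*\ud x$ into $\int_{\partial B}\partial_r W_\lambda\,u^*\ud\sigma + \int_B f(W_\lambda)\,u^*\ud x$. The paper then stops here: after showing the bulk term $\int_B f(W_\lambda) u^*\ud x$ is cancelled against the nonlinear cross term (up to errors of size $O(c_0 c^*\lambda^{(N-2)/2})$), it treats the boundary flux $\int_{\partial B}\partial_r W_\lambda\,u^*\ud\sigma$ as the leading term, and bounds it (respectively signs it) using only that $|u^*|\le c^*$ (respectively $u^* \le -\tfrac12 c^*$) on $B(0,\rho)$. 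You instead push the expansion one step further: using the exact identity $\int_{\partial B}\partial_r W_\lambda\ud\sigma = -\int_B f(W_\lambda)\ud x$ coming from the stationary equation, you split $u^* = u^*(t,0) + (u^* - u^*(t,0))$ to extract the precise coefficient $-u^*(t,0)\bigl(\int f(W)\bigr)\lambda^{\frac{N-2}{2}}$, relegating everything else to a remainder.

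What each approach buys: your version makes the heuristic of Step~1 of the introduction fully explicit (the leading term is literally $-u^*_0(0)\,\lambda^{(N-2)/2}$ up to a universal constant), which is conceptually cleaner and would be useful if one wanted an asymptotic rather than a two-sided bound. The price is the additional ingredient you invoke, namely a uniform local H\"older estimate $|u^*(t,x)-u^*(t,0)|\lesssim|x|^\gamma$. This is legitimate here because $s>\tfrac{N-2}{2}$ gives $\dot H^{s+1}\hookrightarrow \dot C^{0,\gamma}$ with $\gamma=s+1-\tfrac N2$ and the Appendix furnishes the uniform-in-$t$ bound on $\|u^*(t)\|_{X^s}$; but the paper avoids it entirely and only uses the $L^\infty$ bound and the local sign, which is a touch more economical for the stated lemma. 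All your bookkeeping (the estimates on $\int_B f'(W_\lambda)$, on $|B|$, the use of $R^{-(N-2)}=c_0 c^*$, and the shrinkage of $\rho$ so that $c^*\sim|u^*_0(0)|$) checks out, and both approaches correctly get the factor $c_0$ on all terms other than the one carrying the sign.
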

\begin{proof}
  Integrating by parts we obtain
  \begin{equation*}
    \begin{aligned}
      \int\grad V(\lambda)\cdot\grad u^*\ud x &= \int_{B(0, R\sqrt\lambda)}\grad(W_\lambda)\cdot \grad u^*\ud x \\
      &= -\int_{B(0, R\sqrt\lambda)}\Delta(W_\lambda)\cdot u^*\ud x + \int_{S(0, R\sqrt\lambda)}\partial_r(W_\lambda)\cdot u^*\ud\sigma \\
      &= \int_{B(0, R\sqrt\lambda)}f(W_\lambda)\cdot u^*\ud x + \int_{S(0, R\sqrt\lambda)}\partial_r(W_\lambda)\cdot u^*\ud\sigma.
    \end{aligned}
  \end{equation*}
  Developping the energy gives
\begin{equation}
  \label{eq:en-expansion}
  \begin{aligned}
  E(\bs V(\lambda) + \bs u^*) - E(\bs W) - E(\bs u^*) &= \int \grad V(\lambda)\cdot \grad u^*\ud x + \frac 12\int|\grad V(\lambda)|^2 - |\grad (W_\lambda)|^2 \ud x \\
  &-\int F(V(\lambda)+u^*)-F(W_\lambda) - F(u^*)\ud x \\
  &= \int_{S(0, R\sqrt\lambda)}\partial_r(W_\lambda)\cdot u^*\ud\sigma + \frac 12 \int |\grad V(\lambda)|^2 - |\grad W_\lambda|^2 \ud x \\
  &- \int F(V(\lambda)+u^*)-F(W_\lambda) - F(u^*) - f(V(\lambda))\cdot u^*\ud x \\ &+ \int_{B(0, R\sqrt\lambda)}\big(f(W_\lambda)-f(V(\lambda))\big)\cdot u^*\ud x.
\end{aligned}
\end{equation}
We will show that all the terms on the right hand side except for the first one are $\lesssim c_0 c^* \lambda^\frac{N-2}{2}$,
where $c_0$ is the small constant in \eqref{eq:R}.

The fact that $\int |\grad V(\lambda)|^2 - |\grad W_\lambda|^2 \ud x \lesssim c_0 c^* \lambda^\frac{N-2}{2} = R^{-N + 2}\lambda^\frac{N-2}{2}$
follows directly from the proof of \eqref{eq:V-W}.

We will now show that
  \begin{equation*}
    \int |F(V(\lambda) + u^*) - F(V(\lambda)) - F(u^*) - f(V(\lambda))u^*|\ud x \ll \lambda^{\frac{N-2}{2}}.
  \end{equation*}
  To this end, notice first that the integrand equals $0$ for $|x| \geq R\sqrt\lambda$.
  In the region $|x| \leq R\sqrt\lambda$ we use the pointwise estimate
  \begin{equation*}
    |F(V(\lambda)+u^*)-F(V(\lambda))-F(u^*)-f(V(\lambda))u^*| \lesssim f'(V(\lambda))|u^*|^2 + F(u^*).
  \end{equation*}
  The term $F(u^*)$ can be neglected (it is bounded in $L^\infty$,
  so its contribution is at most $\lambda^{\frac{N}{2}} \ll \lambda^{\frac{N-2}{2}}$).
  As for the first term, it is easily checked that
  \begin{equation}
    \label{eq:fprim}
    \int_{|x| \leq R\sqrt\lambda}f'(W_\lambda)\ud x = \lambda^{N-2}\int_{|x|\leq R/\sqrt\lambda}f'(W)\ud x \ll \lambda^{\frac{N-2}{2}}.
  \end{equation}

  Next, we show that if $R$ is large enough, then
  \begin{equation*}
    \int|F(W_\lambda) - F(V(\lambda))|\ud x \lesssim c_0 c^* \lambda^{\frac{N-2}{2}}.
  \end{equation*}
  In the region $|x| \geq R\sqrt\lambda$ from \eqref{eq:V-W} and Sobolev embedding we obtain that the contribution is at most $\lambda^{\frac N2} \ll \lambda^{\frac{N-2}{2}}$.
  In the region $|x| \leq R\sqrt\lambda$ we use the bound
  \begin{equation*}
    |F(W_\lambda) - F(V(\lambda))| \lesssim \zeta(\lambda)\cdot|f(W_\lambda)| + F(\zeta(\lambda)).
  \end{equation*}
  The second term is in $L^\infty$, so its integral is at most $O(\lambda^{\frac N2}) \ll \lambda^{\frac{N-2}{2}}$.
  As for the first term, it is easily seen that $\int |f(W_\lambda)|\ud x \lesssim \lambda^{\frac{N-2}{2}}$,
  and we get the conclusion if we recall that $\zeta(\lambda)\sim c_0 c^*$.

  Finally, from \eqref{eq:fprim} and the pointwise bound
  $|f(V(\lambda)) - f(W_\lambda)| \lesssim |\zeta(\lambda)f'(W_\lambda)| + |f(\zeta(\lambda))|$ it follows that
  \begin{equation*}
    \int_{B(0, R\sqrt\lambda)}|f(V(\lambda)) - f(W_\lambda)|\cdot|u^*|\ud x \ll \lambda^{\frac{N-2}{2}}.
  \end{equation*}

  Now consider the first term on the right hand side of \eqref{eq:en-expansion}.
  We have $\partial_r(W_\lambda)(R\sqrt\lambda) \sim -\lambda^{\frac{N-2}{2}}(R\sqrt\lambda)^{-N+1}$ and $|u^*| \leq c^*$ near the origin,
  so we get
  \begin{equation*}
    \Big|\int_{S(0, R\sqrt\lambda)}\partial_r(W_\lambda)\cdot u^*\ud\sigma\Big| \lesssim c^*\lambda^\frac{N-2}{2}.             
  \end{equation*}
In the case $u^*_0(0) < 0$, by continuity if in the definition of $c^*$ we choose $\rho$ small enough, then $u^*(t, x) \leq -\frac 12 c^*$ for $(t, x) \in [t_0, T_+) \times B(0, \rho)$.
  In particular,
  \begin{equation*}
    \int_{S(0, R\sqrt\lambda)}\partial_r(W_\lambda)\cdot u^*\ud\sigma \gtrsim c^*\lambda^\frac{N-2}{2},             
  \end{equation*}
  where the constant in this estimate is independent of $c_0$. The conclusion follows from \eqref{eq:en-expansion} if $c_0$ is chosen small enough.
\end{proof}

We will focus at present on the second term on the left hand side of \eqref{eq:loi-energy}. In Lemma~\ref{lem:loi-gamma-reg} we treat
the simpler case $\bs u^*_0 \in X^2 \times H^2$ and in Lemma~\ref{lem:loi-gamma} we prove a weaker estimate in the case
$\bs u^* \in X^s \times H^s$, $s > \frac{N-2}{2}$, $s \geq 1$.
\begin{lemma}
  \label{lem:loi-gamma-reg}
  Suppose that $\bs u^*_0 \in X^2 \times H^2$. Then for $t \in [t_0, T_+)$ there holds
    \begin{equation*}
      |\la \vD E(\bs V(\lambda(t)) + \bs u^*(t)), \bs g(t)\ra| \lesssim \sqrt{c_0}\cdot \sup_{t\leq \tau < T_+}n(\bs g(\tau), \lambda(\tau))^2,
    \end{equation*}
    where $c_0$ is the small constant in \eqref{eq:R}.
\end{lemma}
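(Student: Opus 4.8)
The goal is to control the linear term $\la \vD E(\bs V(\lambda) + \bs u^*), \bs g\ra$, which by Step 2 of the outline should be "almost conserved". The plan is to first replace $\vD E(\bs V(\lambda) + \bs u^*)$ by $\vD E(\bs u^*)$ up to acceptable errors, then exploit the Hamiltonian structure to turn $\la \vD E(\bs u^*), \bs g\ra$ into a time derivative plus controllable terms, and finally integrate in time from $t$ up to $T_+$, using that $\bs g(\tau) \to 0$ as $\tau \to T_+$ to kill the boundary term.

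\medskip

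\emph{Step A: reduce to $\vD E(\bs u^*)$.} Write
\[
\la \vD E(\bs V(\lambda) + \bs u^*), \bs g\ra = \la \vD E(\bs u^*), \bs g\ra + \la \vD E(\bs V(\lambda) + \bs u^*) - \vD E(\bs u^*), \bs g\ra .
\]
The difference $\vD E(\bs V(\lambda)+\bs u^*) - \vD E(\bs u^*)$ has second component $0$ and first component $-\Delta V(\lambda) - \big(f(V(\lambda)+u^*) - f(u^*)\big)$. Since $\Delta W_\lambda + f(W_\lambda) = 0$, one rewrites this as $-\big(\Delta V(\lambda) + f(V(\lambda))\big) - \big(f(V(\lambda)+u^*) - f(V(\lambda)) - f(u^*)\big)$. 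The first bracket is the "defect" of the cut-off, estimated exactly as in the proof of \eqref{eq:alpha-err-2} using \eqref{eq:V-W}, \eqref{eq:R}, $|f(W_\lambda) - f(V(\lambda))| \lesssim f'(W_\lambda) c_0 c^*$ and $\int f'(W_\lambda)\ud x$, $\int|f(W_\lambda)|\ud x \lesssim \lambda^{\frac{N-2}{2}}$; the second bracket is bounded pointwise by $f'(W_\lambda)\cdot c^* + f(c^*)$ exactly as in \eqref{eq:pointwise-ustar-V}. Pairing with $\bs g$ via H\"older (the relevant dual norm of $\bs g$ is $\|\bs g\|_\cE$, and the $W_\lambda$-weighted quantities contribute a factor $\lambda^{\frac{N-2}{2}}$ through $L^{\frac{2N}{N+2}}$ bounds) yields a bound $\lesssim \sqrt{c_0}\, n(\bs g, \lambda)^2$. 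So it remains to handle $\la \vD E(\bs u^*), \bs g\ra$.

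\medskip

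\emph{Step B: use the Hamiltonian structure.} Observe that $\bs g = \bs u - \bs u^* - \bs V(\lambda)$, so
\[
\la \vD E(\bs u^*), \bs g\ra = \la \vD E(\bs u^*), \bs u - \bs u^*\ra - \la \vD E(\bs u^*), \bs V(\lambda)\ra .
\]
For the second piece, $\la \vD E(\bs u^*), \bs V(\lambda)\ra = \la -\Delta u^* - f(u^*), V(\lambda)\ra$, supported in $B(0, R\sqrt\lambda)$; using $\|V(\lambda)\|_{L^{\frac{2N}{N-2}}} \lesssim 1$ and that $-\Delta u^* - f(u^*) \in L^{\frac{2N}{N+2}}(B(0,R\sqrt\lambda))$ with small norm (the ball has volume $\lesssim (R\sqrt\lambda)^N$, and $s > \frac{N-2}{2}$ gives enough regularity on $\bs u^*_0$, hence on $\bs u^*(t)$), this is $\ll n(\bs g,\lambda)^2$, even $o(\lambda^{\frac{N-2}{2}})$. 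For the first piece, I would let $\bs w := \bs u - \bs u^*$ and compute $\dd t \la \vD E(\bs u^*), \bs w\ra$. Since both $\bs u$ and $\bs u^*$ solve \eqref{eq:nlw}, $\partial_t \bs u^* = J \vD E(\bs u^*)$ and $\partial_t \bs w = J\vD E(\bs u) - J\vD E(\bs u^*)$, so by skew-symmetry of $J$ and symmetry of $\vD^2 E$:
\[
\dd t \la \vD E(\bs u^*), \bs w\ra = \la \vD^2 E(\bs u^*)\,\partial_t\bs u^*, \bs w\ra + \la \vD E(\bs u^*), \partial_t\bs w\ra = \la \vD^2 E(\bs u^*) J\vD E(\bs u^*), \bs w\ra + \la \vD E(\bs u^*), J(\vD E(\bs u) - \vD E(\bs u^*))\ra .
\]
The first term vanishes because $\la \vD^2 E(\bs u^*) J \vD E(\bs u^*), \cdot\ra$ corresponds to a self-adjoint operator composed with $J$ applied to the same vector — more precisely $\la J\vD E(\bs u^*), \vD E(\bs u^*)\ra = 0$ after transposing $\vD^2 E$ — wait, one must be careful: $\la \vD^2 E(\bs u^*) J \vD E(\bs u^*), \bs w\ra$ is not obviously zero, so instead I would use the cleaner identity $\dd t E(\bs u^*) = 0$ differentiated against nothing; the right way is to note $\la \vD E(\bs u^*), \bs w\ra = \la \vD E(\bs u^*), \bs u\ra - \la \vD E(\bs u^*), \bs u^*\ra$ and that $\dd t \la \vD E(\bs u^*), \bs u^*\ra = 2\dd t E(\bs u^*) = 0$ to lowest order. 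Expanding $\vD E(\bs u) - \vD E(\bs u^*) = \vD^2 E(\bs u^*)(\bs u - \bs u^*) + O(\|\bs u - \bs u^*\|^2)$ and $\bs u - \bs u^* = \bs V(\lambda) + \bs g$, one finds that $\dd t \la \vD E(\bs u^*), \bs w\ra$ is, modulo quadratic-in-$\bs g$ and $\lambda$-weighted errors, equal to $\la \vD E(\bs u^*), J\vD^2 E(\bs u^*)\bs V(\lambda)\ra$, which is a spatially localized quantity bounded by $C c^* \lambda^{\frac{N-2}{2}} \cdot (\text{decay})$. Integrating from $t$ to $T_+$ and using $\la \vD E(\bs u^*), \bs w\ra \to 0$ at $T_+$ (since $\bs w = \bs V(\lambda) + \bs g \to 0$ in $\cE$), we pick up $\int_t^{T_+} \lambda(\tau)^{\frac{N-2}{2}}\ud\tau \cdot (\text{lower order})$, which must be absorbed into $\sqrt{c_0}\sup_\tau n(\bs g(\tau),\lambda(\tau))^2$ — the $\sqrt{c_0}$ coming from choosing the cut-off radius, and an additional smallness from $T_+ - t$ small and $\lambda(\tau) \to 0$.

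\medskip

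\emph{Main obstacle.} The genuinely delicate point is Step B: making the "almost conservation" rigorous, i.e. showing that after all the bookkeeping the time-integrated error is not merely $\lesssim n(\bs g,\lambda)^2$ pointwise (which would be useless, as $n^2$ already controls $\la \vD^2 E\bs g,\bs g\ra$ up to sign) but carries a genuine small constant $\sqrt{c_0}$. This forces one to track carefully that every error term either is multiplied by the cut-off parameter $c_0 c^* = R^{-(N-2)}$, or comes with an extra power of $\lambda$ or $T_+ - t$ that can be made small independently. The interplay between the $\dot H^{-1}$/$L^2$ duality used to pair against $\bs g$ and the precise $L^{\frac{2N}{N+2}}$ decay rates of the $W_\lambda$-weighted and $u^*$-weighted quantities (which is exactly where the hypothesis $s > \frac{N-2}{2}$, $s \geq 1$, and the regularity of $\bs u^*$ enter) is where the proof's bulk will lie.
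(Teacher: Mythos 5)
Your Step~A coincides with the paper's first step (establishing the bound \eqref{eq:pas-de-bulle}), and this part is essentially correct. The problems are concentrated in Step~B, where your approach diverges from the paper's and in doing so introduces two genuine errors.

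\textbf{The decomposition $\bs g = \bs w - \bs V(\lambda)$ and the pointwise bound on $\la\vD E(\bs u^*),\bs V(\lambda)\ra$.} You claim that $\la\vD E(\bs u^*),\bs V(\lambda)\ra = o(\lambda^{\frac{N-2}{2}})$ using only the $L^{\frac{2N}{N+2}}$--$L^{\frac{2N}{N-2}}$ duality and the small volume of $B(0,R\sqrt\lambda)$. This is quantitatively false. Take $N=5$: even after integrating by parts, the naive bound $|\la\grad u^*,\grad W_\lambda\ra| \leq \|\grad u^*\|_{L^2}\|\grad W_\lambda\|_{L^2(B(0,R\sqrt\lambda))}$ is $O(1)$ since $\|\grad W_\lambda\|_{L^2(B(0,R\sqrt\lambda))}\to\|\grad W\|_{L^2}\neq 0$, and using H\"older with $\Delta u^*\in L^{10/3}$ and $\|V(\lambda)\|_{L^{10/7}(B(0,R\sqrt\lambda))}\sim R^{1/2}\lambda^{1/4}$ gives only $O(R^{1/2}\lambda^{1/4})$, far larger than $\lambda^{3/2}$. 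The statement is salvageable, but only by exploiting that $u^*$ is radial and $\dot H^3$, hence $\grad u^*(0)=0$ and $|\grad u^*(x)|\lesssim |x|^{1/2}$ near the origin; none of this appears in your argument. The paper sidesteps this entirely by differentiating $\la\vD E(\bs u^*),\bs g\ra$ directly rather than splitting $\bs g$ as $\bs w - \bs V(\lambda)$, so the problematic object $\la\vD E(\bs u^*),\bs V(\lambda)\ra$ never has to be bounded pointwise.

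\textbf{The boundary term and the cancellation.} Your justification ``$\la\vD E(\bs u^*),\bs w\ra\to 0$ at $T_+$ since $\bs w = \bs V(\lambda)+\bs g\to 0$ in $\cE$'' is incorrect: $\|\bs V(\lambda)\|_\cE$ is bounded below by a positive constant uniformly in $\lambda$ (it carries essentially the full energy of the bubble), so $\bs w$ does \emph{not} converge strongly to zero. The pairing $\la\vD E(\bs u^*),\bs w\ra$ does go to zero, but by concentration (weak convergence of $V(\lambda)$ against a fixed $L^{\frac{2N}{N+2}}_{\mathrm{loc}}$ profile), which is a different and weaker mechanism. Moreover, your cancellation of $\la\vD^2 E(\bs u^*)J\vD E(\bs u^*),\bs w\ra$ is stated confusingly: that term does not vanish by itself; it cancels against the linear-in-$\bs w$ part of $\la\vD E(\bs u^*),J(\vD E(\bs u)-\vD E(\bs u^*))\ra$ after transposing $\vD^2 E(\bs u^*)$ and using the antisymmetry of $J$. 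You gesture at this but leave it unresolved. Finally, the remainder after the cancellation is ``$O(\|\bs w\|^2)$'', and since $\|\bs w\|_\cE\sim 1$ this is not small per se; the required estimates hinge on the localization of $V(\lambda)$ and lead exactly to the three bounds \eqref{eq:gamma-err-1}, \eqref{eq:gamma-err-2}, \eqref{eq:gamma-err-3} (after rewriting in terms of $\bs g$, which is what the paper does, using the equation \eqref{eq:loi-dtg} so that the cut-off error $\Delta V(\lambda)+f(V(\lambda))$ and the modulation term $\lambda'\partial_\lambda\bs V(\lambda)$ appear explicitly and can be estimated one by one). These estimates are the real content of \eqref{eq:magique} and your sketch does not carry them out.
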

\begin{proof}
  The proof has two steps. First we will show that
  \begin{equation}
    \label{eq:pas-de-bulle}
    |\la \vD E(\bs V(\lambda(t)) + \bs u^*(t)) - \vD E(\bs u^*(t)), \bs g(t)\ra| \lesssim \sqrt{c_0}\cdot n(\bs g(t), \lambda(t))^2
  \end{equation}
  and then we will check that
  \begin{equation}
    \label{eq:magique}
    \big|\dd t\la \vD E(\bs u^*(t)), \bs g\ra\big| \lesssim_R n(\bs g(t), \lambda(t))^2.
  \end{equation}
  Clearly, integrating \eqref{eq:magique} and using \eqref{eq:pas-de-bulle}, we obtain the conclusion for $t_0$ sufficiently close to $T_+$.
  Note that the constant in \eqref{eq:magique} is allowed to depend on $R$ (because $T_+ - t_0$ can also be chosen depending on $R$).

  In order to prove \eqref{eq:pas-de-bulle}, we begin by verifying that
  \begin{equation}
    \label{eq:ustar-final1}
    \big|\la \vD E(\bs V(\lambda) + \bs u^*), \bs g\ra - \la\vD E(\bs V(\lambda)), \bs g\ra - \la\vD E(\bs u^*), \bs g\ra\big| \ll n(\bs g, \lambda)^2.
  \end{equation}
  This is equivalent to
  \begin{equation*}
    \int|f(V(\lambda) + u^*) - f(V(\lambda)) - f(u^*)|\cdot|g|\ud x \ll n(\bs g, \lambda)^2.
  \end{equation*}
  By H\"older and Sobolev inequalities, it suffices to show that
  \[
    \|f(V(\lambda) + u^*) - f(V(\lambda)) - f(u^*)\|_{L^{\frac{2N}{N+2}}} \ll \lambda^{\frac{N-2}{4}}.
  \]
  Using \eqref{eq:pointwise-ustar-V} we obtain easily that the left hand side is $\lesssim \lambda^\frac{N-2}{2}$.

  Recall that $R^{-N + 2} = c_0 c^*$, hence \eqref{eq:V-W} gives $\|W_\lambda - V(\lambda)\|_{\dot H^1} \lesssim \sqrt{c_0 c^*}$.
  Using $\Delta W_\lambda + f(W_\lambda) = 0$ and the pointwise bound $|f(W_\lambda) - f(V(\lambda))| \lesssim f'(W_\lambda)\cdot |W_\lambda - V(\lambda)|$ one gets
  $$
  \|\Delta V(\lambda) + f(V(\lambda))\|_{\dot H^{-1}} \lesssim \|\Delta(W_\lambda - V(\lambda))\|_{\dot H^{-1}} + \|f(W_\lambda) - f(V(\lambda))\|_{L^\frac{2N}{N+2}} \lesssim \sqrt{c_0c^*},
  $$
  hence
  \begin{equation}
    \label{eq:ustar-final2}
    |\la \vD E(\bs V(\lambda)), \bs g\ra| \lesssim \sqrt{c_0}\cdot n(\bs g, \lambda)^2.
  \end{equation}
  Estimate \eqref{eq:pas-de-bulle} follows from \eqref{eq:ustar-final1} and \eqref{eq:ustar-final2}.
  Notice that until now the assumption $\bs u_0^* \in X^2 \times H^2$ has not been used, thus \eqref{eq:pas-de-bulle} holds also in the case $\bs u_0^* \in X^s\times H^s$, $s > \frac{N-2}{2}$.

  We move on to the proof of \eqref{eq:magique}. Until the end of this proof all the constants are allowed to depend on $R$.
  From \eqref{eq:loi-dtg} we get
  \begin{equation*}
    \begin{aligned}
      \dd t\la \vD E(\bs u^*), \bs g\ra &= \la \vD^2 E(\bs u^*)\partial_t \bs u^*, \bs g\ra + \big\la \vD E(\bs u^*), J\circ\big(\vD E(\bs V(\lambda) + \bs u^* + \bs g) - \vD E(\bs u^*)\big)-\lambda'\partial_\lambda \bs V(\lambda)\big\ra.
    \end{aligned}
  \end{equation*}
  Notice that
  \begin{equation*}
    \la \vD^2 E(\bs u^*)\partial_t \bs u^*, \bs g\ra = -\la \vD E(\bs u^*), J\circ \vD^2 E(\bs u^*)\bs g\ra,
  \end{equation*}
  hence it suffices to verify that
  \begin{equation*}
    \big|\big\la \vD E(\bs u^*), J\circ\big(\vD E(\bs V(\lambda) + \bs u^* + \bs g)-\vD E(\bs u^*) - \vD^2 E(\bs u^*)\bs g\big) - \lambda'\partial_\lambda \bs V(\lambda)\big\ra\big| \lesssim n(\bs g, \lambda)^2.
  \end{equation*}
  Considering separately the first and the second component, cf. \eqref{eq:loi-dtg-2}, we obtain that it is sufficient to verify the following bounds:
  \begin{align}
    \label{eq:gamma-err-1}
    |\la \Delta u^* + f(u^*), \lambda'\partial_\lambda V(\lambda)\ra| &\lesssim n(\bs g, \lambda)^2, \\
    \label{eq:gamma-err-2}
    |\la \dot u^*, f(V(\lambda) + u^* + g) - f(V_\lambda) - f(u^*) - f'(u^*)g\ra| &\lesssim n(\bs g, \lambda)^2, \\
    \label{eq:gamma-err-3}
    |\la \dot u^*, \Delta V(\lambda) + f(V(\lambda))\ra| &\lesssim n(\bs g, \lambda)^2.
  \end{align}

  We know from Appendix~\ref{sec:cauchy} that $u^*(t)$ is bounded in $X^2$,
  hence $\Delta u^* + f(u^*)$ is bounded in $L^\frac{2N}{N-2}$ by the Sobolev embedding.
  From \eqref{eq:lV-H-1} and H\"older inequality it follows that
  $$|\la \Delta u^* + f(u^*), \partial_\lambda V(\lambda)\ra| \lesssim \lambda^\frac{N-2}{4},$$
  and \eqref{eq:gamma-err-1} follows from \eqref{eq:loi-mod}.

  Since $\dot u^*(t)$ is bounded in $L^\frac{2N}{N-2}$, in order to prove \eqref{eq:gamma-err-2} it suffices (by H\"older) to check that
  \begin{equation}
    \label{eq:dest-inter-0}
    \|f(V(\lambda) + u^* + g) - f(V(\lambda)) - f(u^*) - f'(V(\lambda) + u^*)g\|_{L^\frac{2N}{N+2}} \lesssim n(\bs g, \lambda)^2
  \end{equation}
  and
  \begin{equation}
    \label{eq:dest-inter-1}
    \|\dot u^*\cdot(f'(V(\lambda) + u^*) - f'(u^*))\|_{L^\frac{2N}{N+2}} \lesssim \lambda^\frac{N-2}{4}.
  \end{equation}
  We first prove \eqref{eq:dest-inter-1}.
  For $|x| \geq R\sqrt\lambda$ the integrand equals 0, and in the region $|x| \leq R\sqrt\lambda$ there holds $|f'(V(\lambda))| + |f'(u^*)| \lesssim f'(W_\lambda)$.
  \begin{itemize}
    \item For $N = 3$ $\dot u^* \in H^2 \subset L^\infty$ and $\|f'(W_\lambda)\|_{L^\frac 65} \lesssim \lambda^\frac 12$.
    \item For $N = 4$ $\dot u^* \in H^2 \subset L^{12}$ and $\|f'(W_\lambda)\|_{L^\frac 32} \lesssim \lambda^\frac 23$.
    \item For $N = 5$ $\dot u^* \in H^2 \subset L^{10}$ and $\|f'(W_\lambda)\|_{L^\frac 53} \lesssim \lambda$.
  \end{itemize}
  In all three cases \eqref{eq:dest-inter-1} follows from H\"older inequality.

  By a pointwise bound we have
  \begin{equation*}
    \|f(V(\lambda) + u^*) - f(V(\lambda)) - f(u^*)\|_{L^\frac{2N}{N+2}} \lesssim \|u^*\cdot f'(V(\lambda))\|_{L^\frac{2N}{N+2}} + \|f'(u^*)\cdot V(\lambda)\|_{L^\frac{2N}{N+2}}.
  \end{equation*}
  It is easy to check that
  $$\|f'(V(\lambda))\|_{L^\frac{2N}{N+2}} \leq \|f'(W_\lambda)\|_{L^\frac{2N}{N+2}} \lesssim \lambda^\frac{N-2}{2}.$$
  Together with \eqref{eq:lV-H-1} this yields
  \begin{equation}
    \label{eq:f_V+u}
    \|f(V(\lambda) + u^*) - f(V(\lambda)) - f(u^*)\|_{L^\frac{2N}{N+2}} \lesssim \lambda^\frac{N-2}{2},
  \end{equation}
  and \eqref{eq:dest-inter-0} follows from \eqref{eq:pointwise-ustar-V-g} and the H\"older inequality.

  In order to prove \eqref{eq:gamma-err-3}, we write:
  \begin{equation}
    \label{eq:gamma-err-4}
    |\la \dot u^*, \Delta V(\lambda) + f(V(\lambda))\ra| \leq |\la \dot u^*, \Delta(V(\lambda) - W_\lambda)\ra| + |\la \dot u^*, f(V(\lambda) - f(W_\lambda)\ra|.
  \end{equation}
  Consider the first term of \eqref{eq:gamma-err-4}. Integrating twice by parts we find
  \begin{equation*}
    \begin{aligned}
      \int\dot u^*\cdot \Delta(V(\lambda) - W_\lambda)\ud x &= \int_{|x| \geq R\sqrt\lambda}\grad \dot u^*\cdot \grad (W_\lambda)\ud x \\
      &= \int_{S(0, R\sqrt\lambda)}\dot u^*\cdot \partial_r(W_\lambda)\ud\sigma - \int_{|x| \geq R\sqrt\lambda}\dot u^*\cdot \Delta(W_\lambda)\ud x.
    \end{aligned}
  \end{equation*}
  As for the first term, recall that $|\partial_r(W_\lambda(R\sqrt\lambda))| \lesssim \lambda^\frac{N-2}{2}$,
  so it suffices to notice that by the Trace Theorem $\int |\dot u^*|\ud \sigma \ll 1$ for $\lambda \ll 1$.
  In order to bound the second term, we compute
  \begin{equation*}
    \|f(W_\lambda)\|_{L^\frac{2N}{N+2}(|x|\geq R\sqrt\lambda)} = \|f(W)\|_{L^\frac{2N}{N+2}(|x|\geq R/\sqrt\lambda)} \sim \lambda^\frac{N+2}{4} \ll \lambda^\frac{N-2}{2},
  \end{equation*}
  and use H\"older.

  Consider the second term of \eqref{eq:gamma-err-4}. From \eqref{eq:V-Linf} we have $|f(V(\lambda)) - f(W_\lambda)| \lesssim f'(W_\lambda)$, hence:
  \begin{equation*}
    \Big|\int\dot u^*\cdot \big(f(V(\lambda)) - f(W_\lambda)\big)\ud x\Big| \lesssim \int |\dot u^*|\cdot f'(W_\lambda)\ud x,
  \end{equation*}
  and the required bound follows from H\"older and the fact that $\|f'(W_\lambda)\|_{L^\frac{2N}{N+2}} \lesssim \lambda^\frac{N-2}{2}$.
\end{proof}

\begin{lemma}
  \label{lem:loi-gamma}
  Suppose that $\bs u^*_0 \in X^s \times H^s$, $s > \frac{N-2}{2}$ and $s \geq 1$. There exists a decomposition
  \begin{equation*}
    \la \vD E(\bs V(\lambda(t)) + \bs u^*(t)), \bs g(t)\ra  = b_1(t) + b_2(t)
  \end{equation*}
  such that for $t \in [t_0, T_+)$ there holds:
  \begin{align}
    |b_1'(t)| &\ll \lambda(t)^\frac{N-4}{2}\|\bs g\|_\cE, \label{eq:b1p} \\
    |b_2(t)| &\lesssim \sqrt{c_0}\cdot \sup_{t \leq \tau < T_+}n(\bs g(\tau), \lambda(\tau))^2. \label{eq:b2}
  \end{align}
\end{lemma}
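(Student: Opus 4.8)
\emph{Proof proposal.}
The plan is to mimic the two-step structure of the proof of Lemma~\ref{lem:loi-gamma-reg}, keeping track of each contribution and routing it into $b_2$ whenever it carries a factor $\sqrt{c_0}$ or is a total time-derivative of such a quantity, and into $b_1$ otherwise. First, recall that \eqref{eq:pas-de-bulle} was proved using only $s>\tfrac{N-2}{2}$; hence the ``no bubble'' term $\la\vD E(\bs V(\lambda(t))+\bs u^*(t))-\vD E(\bs u^*(t)),\bs g(t)\ra$ is $\lesssim\sqrt{c_0}\,n(\bs g(t),\lambda(t))^2\le\sqrt{c_0}\sup_{t\le\tau<T_+}n(\bs g(\tau),\lambda(\tau))^2$, and I put it into $b_2$. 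It then remains to split $\la\vD E(\bs u^*(t)),\bs g(t)\ra=b_1(t)+\tilde b_2(t)$ with the two required bounds.

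I differentiate $\la\vD E(\bs u^*),\bs g\ra$ via the Hamiltonian identity $\la\vD^2E(\bs u^*)\partial_t\bs u^*,\bs g\ra=-\la\vD E(\bs u^*),J\circ\vD^2E(\bs u^*)\bs g\ra$ together with \eqref{eq:loi-dtg}, exactly as in the derivation of \eqref{eq:magique}, obtaining
\begin{equation*}
  \dd t\la\vD E(\bs u^*),\bs g\ra=\lambda'\la\Delta u^*+f(u^*),\partial_\lambda V(\lambda)\ra+\la\dot u^*,\ \Delta V(\lambda)+f(u^*+V(\lambda)+g)-f(u^*)-f'(u^*)g\ra,
\end{equation*}
and I split the second pairing into the $\lambda$-only part $\la\dot u^*,\Delta V(\lambda)+f(u^*+V(\lambda))-f(u^*)\ra$, the linear-in-$g$ part $\la\dot u^*,(f'(u^*+V(\lambda))-f'(u^*))g\ra$, and the quadratic error $\la\dot u^*,f(u^*+V(\lambda)+g)-f(u^*+V(\lambda))-f'(u^*+V(\lambda))g\ra$. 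The first line is bounded by pairing $\Delta u^*\in H^{s-1}$ (legitimate since $u^*\in X^s$ and $s\ge1$) against $\partial_\lambda V(\lambda)\in H^{1-s}$ and invoking \eqref{eq:lV-H1-s} and \eqref{eq:loi-mod}, the $f(u^*)$-contribution being controlled by $f(u^*)\in L^\infty$ and $\|\partial_\lambda V(\lambda)\|_{L^1}\lesssim\lambda^{(N-2)/2}$; this gives $\ll\lambda^{(N-4)/2}\|\bs g\|_\cE$. On $\{|x|<R\sqrt\lambda\}$, where the bubble-interaction terms live, one has $|f'(u^*+V(\lambda))-f'(u^*)|\lesssim f'(W_\lambda)+f'(c^*)$ and $|f''(u^*+V(\lambda))|\lesssim f''(W_\lambda)+f''(c^*)$, and $\|f'(W_\lambda)\|_{L^q}$, $\|f''(W_\lambda)\|_{L^{q'}}$ both scale like $\lambda^{s-1}$ for the Hölder/Sobolev-dual exponents of the $H^s$-space containing $\dot u^*$; since $s>\tfrac{N-2}{2}$ reads $s-1>\tfrac{N-4}{2}$, both the linear term and the part of the quadratic error on $\{|x|<R\sqrt\lambda\}$ are $\ll\lambda^{(N-4)/2}\|\bs g\|_\cE$, and the part of the quadratic error on $\{|x|\ge R\sqrt\lambda\}$ is $\lesssim\|\bs g\|_\cE^{(N+2)/(N-2)}+\|\bs u^*\|_\cE^{4/(N-2)}\|\bs g\|_\cE^2$, again $\ll\lambda^{(N-4)/2}\|\bs g\|_\cE$ by $\|\bs g\|_\cE\to0$ and the smallness of $\|\bs u^*\|_\cE$. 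All of these stay in $b_1'$.

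Finally, the $\lambda$-only term $\la\dot u^*,\Delta V(\lambda)+f(u^*+V(\lambda))-f(u^*)\ra$ does not decay fast enough on its own. Writing $\Delta V(\lambda)=-f(W_\lambda)$ on $\{|x|<R\sqrt\lambda\}$ plus the surface measure $-\partial_r W_\lambda(R\sqrt\lambda)\,\ud\sigma$ on $S(0,R\sqrt\lambda)$, and $f(u^*+V(\lambda))-f(u^*)=\big(f(u^*+V(\lambda))-f(V(\lambda))-f(u^*)\big)+f(V(\lambda))$, I expect to rewrite this term — up to errors again $\ll\lambda^{(N-4)/2}\|\bs g\|_\cE$ — as $\dd t$ of a quantity bounded by $\sqrt{c_0}\,n(\bs g,\lambda)^2$: the $F$-type primitive for the genuinely nonlinear pieces comes from the pointwise expansions already used in \eqref{eq:en-expansion}, while the surface piece (and $\int_{|x|\ge R\sqrt\lambda}\dot u^*f(W_\lambda)$) is integrated by parts in time, using $\int_{S(0,R\sqrt\lambda)}\dot u^*\,\ud\sigma=\dd t\!\int_{S(0,R\sqrt\lambda)}u^*\,\ud\sigma+O(|\lambda'|/\lambda)\cdot(\text{surface averages of }u^*,\ \partial_r u^*)$, and estimated with the radial Sobolev embedding for $u^*$, $\dot u^*$. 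Setting $b_1:=\la\vD E(\bs u^*),\bs g\ra$ minus these primitives and $b_2:=$ (no-bubble error) $+$ (primitives) then yields \eqref{eq:b1p} and \eqref{eq:b2}. The main obstacle is exactly this last step: unlike in the regular case, the surface term $\la\dot u^*,\Delta V(\lambda)\ra$ is neither small nor a small total time-derivative, and it is its treatment — which is what forces the factor $\|\bs g\|_\cE$ rather than $n(\bs g,\lambda)^2$ in \eqref{eq:b1p} — that is the crux of the argument.
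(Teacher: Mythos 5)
You take a genuinely different route from the paper, and the proof as written is incomplete. The paper simply sets $b_1(t):=\la\vD E(\bs u^*(t)),\bs g(t)\ra$ and $b_2(t):=\la\vD E(\bs V(\lambda)+\bs u^*)-\vD E(\bs u^*),\bs g\ra$; it never integrates by parts in time and never carries ``primitives'' into $b_2$. It just re-verifies \eqref{eq:gamma-err-1}--\eqref{eq:gamma-err-3} with the new target $\ll\lambda^{(N-4)/2}\|\bs g\|_\cE$, the two new inputs being \eqref{eq:lV-H1-s} (pair $\Delta u^*\in H^{s-1}$ against $\partial_\lambda V(\lambda)\in H^{1-s}$ in the $\lambda'$-term) and \eqref{eq:dest-inter-3} (again using $\dot u^*\in H^s$ with $s>\frac{N-2}{2}$ for the linear-in-$g$ piece). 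Your central move — rewriting the $\lambda$-only part $\la\dot u^*,\Delta V(\lambda)+f(u^*+V(\lambda))-f(u^*)\ra$ as a total time derivative of an $O(\sqrt{c_0}\,n^2)$ quantity, via an integration by parts of the surface contribution — is only sketched, and you yourself flag it as the crux; as it stands, this is a gap, not a proof.

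Two further quantitative issues. First, your claim that the quadratic-in-$g$ error on $\{|x|\ge R\sqrt\lambda\}$ is $\ll\lambda^{(N-4)/2}\|\bs g\|_\cE$ ``by $\|\bs g\|_\cE\to0$'' is wrong in dimension $N=5$, where $\lambda^{(N-4)/2}=\lambda^{1/2}\to0$: you would need something like $\|\bs g\|_\cE^{4/3}\ll\lambda^{1/2}$, which is not available a priori (and invoking coercivity would be circular, since Lemma~\ref{lem:loi-gamma} is an ingredient of Corollary~\ref{cor:loi-coercivity}). Second — and this is probably why you reached for the primitive trick in the first place — the $\lambda$-only term is indeed only $O(c^*\lambda^{(N-2)/2})$, which need not be $\ll\lambda^{(N-4)/2}\|\bs g\|_\cE$ pointwise. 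But the remedy is not a more elaborate decomposition: in both places where \eqref{eq:b1p} is used (Lemma~\ref{lem:loi-a} and the proof of Theorem~\ref{thm:loi}), a contribution of size $O(c^*\lambda^{(N-2)/2})\le O(n(\bs g,\lambda)^2)\lesssim\wt\varphi_\tx{M}$ is harmless after one absorbs it into the differential inequality for $\wt\varphi_\tx{M}$, since the exponent $\frac{3N-10}{2(N-2)}<1$ dominates. So the paper's simple $b_1$ already does the job; the time-integration you propose is neither what the paper does nor necessary.
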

\begin{proof}
  We take
  \begin{equation*}
    \begin{aligned}
      b_1(t) &:= \la \vD E(\bs u^*(t)), \bs g(t)\ra, \\
      b_2(t) &:= \la \vD E(\bs V(\lambda(t)) + \bs u^*(t)) - \vD E(\bs u^*(t)), \bs g(t)\ra.
    \end{aligned}
  \end{equation*}
  Estimate \eqref{eq:b2} is exactly \eqref{eq:pas-de-bulle}.

  Repeating the computation in the proof of Lemma~\ref{lem:loi-gamma-reg}, we see that we need to check inequalities \eqref{eq:gamma-err-1}, \eqref{eq:gamma-err-2} and \eqref{eq:gamma-err-3},
  with ``~$\lesssim n(\bs g, \lambda)^2$~'' replaced by ``~$\ll \lambda^\frac{N-4}{2}\|\bs g\|$~''.

  We know that $\Delta u^*$ is bounded in $H^{s-1}$, hence from \eqref{eq:lV-H1-s} we obtain $|\la \Delta u^*, \partial_\lambda V(\lambda)\ra| \ll \lambda^\frac{N-4}{2}$.
  Since $\|f(u^*)\|_{L^\frac{2N}{N-2}}$ is bounded and $\frac{N-2}{4} > \frac{N-4}{2}$, from \eqref{eq:lV-H-1} we get $|\la f(u^*), \partial_\lambda V(\lambda)\ra| \ll \lambda^\frac{N-4}{2}$.
  Using \eqref{eq:loi-mod}, it follows that
  \begin{equation}
    \label{eq:gamma-err-21}
    |\la \Delta u^* + f(u^*), \lambda'\partial_\lambda V(\lambda)\ra| \ll \lambda^\frac{N-4}{2}\|\bs g\|.
  \end{equation}
  The proof of \eqref{eq:gamma-err-2} applies almost without changes, but instead of \eqref{eq:dest-inter-1} we need to check that $\|\dot u^*\cdot (f'(V(\lambda) + u^*) - f'(u^*))\|_{L^\frac{2N}{N+2}} \ll \lambda^\frac{N-4}{2}$, which will follow from
  \begin{equation}
    \label{eq:dest-inter-3}
    \|\dot u^*\cdot f'(W_\lambda)\|_{L^\frac{2N}{N+2}} \ll \lambda^\frac{N-4}{2}.
  \end{equation}
  We check \eqref{eq:dest-inter-3} separately for $N = 3, 4, 5$. Recall that $\dot u^*$ is bounded in $H^s$.
  If $N = 3$, then $\|\dot u^*\|_{L^6}$ and $\|f'(W_\lambda)\|_{L^\frac 32}$ are bounded, hence \eqref{eq:dest-inter-3} follows from H\"older.
  If $N = 4$, then (by Sobolev) there exists $q > 4$ such that $\|\dot u^*\|_{L^q}$ is bounded.
  It can be checked that for $1 < p < 2$, $\|f'(W_\lambda)\|_{L^p} \ll 1$, hence \eqref{eq:dest-inter-3} follows.
  If $N = 5$, then there exists $q > 5$ such that $\|\dot u^*\|_{L^q}$ is bounded.
  It can be checked that for $\frac 54 < p < 2$, $\|f'(W_\lambda)\|_{L^p} \ll \sqrt\lambda$, hence \eqref{eq:dest-inter-3} follows.

  In the proof of \eqref{eq:gamma-err-3} we have only used the boundedness of $\dot u^*$ in $H^1$, hence it remains valid and gives the bound
  $$
  |\dot u^*, \Delta V(\lambda) + f(V(\lambda))\ra| \lesssim \lambda^\frac{N-2}{2} \ll \lambda^\frac{N-2}{4}.
  $$
\end{proof}

\begin{remark}
  \label{rem:loi-gamma}
  It is not excluded that Lemma~\ref{lem:loi-gamma-reg} holds under the assumption $\bs u^*_0 \in X^s \times H^s$, $s > \frac{N-2}{2}$,
  but I was unable to prove it because of possible oscillations of $\lambda(t)$.
  Note also that Lemma~\ref{lem:loi-gamma} could be proved for less regular $\bs u^*_0$ if we had some control of $\bs g(t)$
  in suitable (for example Strichartz) norms.

  Lemma~\ref{lem:loi-gamma-reg} implies that if $\bs u^*_0 \in X^2 \times H^2$, then Lemma~\ref{lem:loi-gamma} holds with $b_1(t) = 0$.
\end{remark}

For $t_0 \leq t < T_+$ we define
\begin{equation}
  \label{eq:loi-phi-def}
  \varphi(t) := C_\tx{I}c^*\lambda(t)^\frac{N-2}{2} - b_1(t) + 2\big(a^-(t)^2 + a^+(t)^2\big)
\end{equation}
($C_\tx{I}$ is a constant to be chosen shortly).
From \eqref{eq:loi-energy} we have
\begin{equation}
  \label{eq:loi-phi-2}
  \begin{aligned}
  \varphi(t) &:= C_\tx{I}c^*\lambda(t)^\frac{N-2}{2} + \big(E(\bs V(\lambda) + \bs u^*) - E(\bs W) - E(\bs u^*)\big) \\
  &+ \frac 12\la \vD^2 E(\bs V(\lambda) + \bs u^*)\bs g, \bs g\ra + 2\big(a^-(t)^2 + a^+(t)^2\big)+ b_2(t) + O(\|\bs g\|_\cE^3).
\end{aligned}
\end{equation}

We will consider the maximal function:
\begin{equation*}
  \varphi_\tx{M}(t) := \sup_{t \leq \tau < T_+}\varphi(\tau).
\end{equation*}
Note that $\varphi_\tx{M}: [t_0, T_+) \to \bR$ is decreasing, $\lim_{t \to T_+} \varphi_\tx{M}(t) = 0$
  and $0 \geq \varphi_\tx{M}'(t) \geq \min(0, \varphi'(t))$ almost everywhere.
\begin{corollary}
  \label{cor:loi-coercivity}
  Let $s > \frac{N-2}{2}$ and $s \geq 1$. For $t_0 \leq t < T_+$ there holds
  \begin{equation*}
    \varphi_\tx{M}(t) \sim \sup_{t \leq \tau < T_+}n(\bs g(\tau), \lambda(\tau))^2.
  \end{equation*}
\end{corollary}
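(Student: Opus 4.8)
The plan is to read everything off the identity \eqref{eq:loi-phi-2}, together with the coercivity statement of Lemma~\ref{lem:coer}. For brevity write $M_t := \sup_{t\le\sigma<T_+}n(\bs g(\sigma),\lambda(\sigma))^2$, so that $n(\bs g(\tau),\lambda(\tau))^2\le M_\tau\le M_t$ for all $\tau\ge t$. I would first establish, for every $\tau\in[t_0,T_+)$, the two pointwise bounds
\[
  \varphi(\tau)\;\lesssim\; n(\bs g(\tau),\lambda(\tau))^2 + \sqrt{c_0}\,M_\tau,\qquad
  \varphi(\tau)\;\gtrsim\; n(\bs g(\tau),\lambda(\tau))^2 - C\sqrt{c_0}\,M_\tau,
\]
and then pass to the supremum over $\tau\in[t,T_+)$, absorbing the $\sqrt{c_0}$-term on the right of the lower bound by choosing $c_0$ small.

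\emph{Upper bound.} I would estimate each summand of \eqref{eq:loi-phi-2} at time $\tau$. The term $C_\tx{I}c^*\lambda(\tau)^{\frac{N-2}{2}}$ is already of the right size; the energy difference $E(\bs V(\lambda)+\bs u^*)-E(\bs W)-E(\bs u^*)$ is $\lesssim c^*\lambda(\tau)^{\frac{N-2}{2}}$ by \eqref{eq:inter-bound}; since $\bs V(\lambda(\tau))+\bs u^*(\tau)$ stays at bounded $\cE$-distance from $\bs W_{\lambda(\tau)}$ (whose $\cE$-norm is a fixed constant), it is bounded in $\cE$, hence by Sobolev $f'(V(\lambda)+u^*)$ is bounded in $L^{N/2}$ and $|\la\vD^2 E(\bs V(\lambda)+\bs u^*)\bs g,\bs g\ra|\lesssim\|\bs g\|_\cE^2$, while $|a^-(\tau)|+|a^+(\tau)|\lesssim\|\bs g(\tau)\|_\cE$; the term $b_2(\tau)$ is $\lesssim\sqrt{c_0}\,M_\tau$ by \eqref{eq:b2}; finally $O(\|\bs g(\tau)\|_\cE^3)\lesssim\big(\sup_{[t_0,T_+)}\|\bs g\|_\cE\big)\|\bs g(\tau)\|_\cE^2\ll\|\bs g(\tau)\|_\cE^2$ for $t_0$ close to $T_+$. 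Summing, $\varphi(\tau)\lesssim n(\bs g(\tau),\lambda(\tau))^2+\sqrt{c_0}\,M_\tau\lesssim M_t$, and taking the supremum over $\tau\ge t$ gives $\varphi_\tx{M}(t)\lesssim M_t$.

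\emph{Lower bound.} For $\tau$ close to $T_+$ one has $\|\bs V(\lambda(\tau))+\bs u^*(\tau)-\bs W_{\lambda(\tau)}\|_\cE\le\|\bs V(\lambda(\tau))-\bs W_{\lambda(\tau)}\|_\cE+\|\bs u^*(\tau)\|_\cE<c$ by \eqref{eq:V-W} and the smallness of $\bs u^*$, so Lemma~\ref{lem:coer} applies with $\bs V=\bs V(\lambda(\tau))+\bs u^*(\tau)$: the orthogonality hypothesis is \eqref{eq:loi-gorth}, and $a^\pm(\tau)=\la\alpha^\pm_{\lambda(\tau)},\bs g(\tau)\ra$, giving
\[
  \frac12\la\vD^2 E(\bs V(\lambda)+\bs u^*)\bs g,\bs g\ra+2\big(a^-(\tau)^2+a^+(\tau)^2\big)\;\gtrsim\;\|\bs g(\tau)\|_\cE^2 .
\]
Combining this with the lower bound $E(\bs V(\lambda)+\bs u^*)-E(\bs W)-E(\bs u^*)\ge-C_1 c^*\lambda(\tau)^{\frac{N-2}{2}}$ from \eqref{eq:inter-bound} ($C_1$ universal), with $|b_2(\tau)|\lesssim\sqrt{c_0}\,M_\tau$, and with the cubic remainder (absorbed into the coercivity since $\|\bs g\|_\cE$ is small), and choosing $C_\tx{I}>C_1$, the identity \eqref{eq:loi-phi-2} would yield, for every $\tau\in[t,T_+)$,
\[
  \varphi(\tau)\;\gtrsim\;c^*\lambda(\tau)^{\frac{N-2}{2}}+\|\bs g(\tau)\|_\cE^2-C\sqrt{c_0}\,M_\tau\;=\;n(\bs g(\tau),\lambda(\tau))^2-C\sqrt{c_0}\,M_\tau ,
\]
with universal implicit constants, provided $t_0$ is close enough to $T_+$. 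Taking the supremum over $\tau\in[t,T_+)$ and using $M_\tau\le M_t$ gives $\varphi_\tx{M}(t)\ge cM_t-C\sqrt{c_0}M_t$, and choosing $c_0$ small enough that $C\sqrt{c_0}\le\frac c2$ yields $\varphi_\tx{M}(t)\gtrsim M_t$. Together with the upper bound this proves the corollary.

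\emph{Main obstacle.} The delicate point is the interplay between the ``tail'' error terms — $b_2$, which carries a supremum over $[\tau,T_+)$ by \eqref{eq:b2}, and the cubic remainder, which needs the global smallness of $\bs g$ — and the supremum defining $\varphi_\tx{M}$: a bare pointwise coercivity estimate would be circular. The resolution is to bound every error by $\sqrt{c_0}\,M_t$ \emph{before} taking the supremum in $\tau$ and only then absorb the $\sqrt{c_0}$-prefactor. This fixes the order of the choices: first pick $C_\tx{I}$ in terms of the universal constant $C_1$; then choose $c_0$ small (which fixes $R$); only afterwards take $t_0$ close enough to $T_+$ that $\lambda$, $\|\bs g\|_\cE$ and $\|\bs u^*\|_\cE$ are as small as the argument requires.
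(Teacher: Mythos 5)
Your proof is correct and follows essentially the same route as the paper: both rely on the identity \eqref{eq:loi-phi-2}, the coercivity of Lemma~\ref{lem:coer}, and the interaction bounds of Lemma~\ref{lem:loi-coer}, with $C_\tx{I}$ chosen to dominate the negative part of the energy interaction and $c_0$ chosen small to absorb the $b_2$ error. The only cosmetic difference is that the paper evaluates $\varphi$ at two distinguished times (one realizing $\varphi_\tx{M}$, one realizing $\sup n^2$) whereas you argue pointwise in $\tau$ and then pass to the supremum; the content is the same.
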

\begin{proof}
    Lemma~\ref{lem:loi-coer} and \eqref{eq:loi-energy} yield $|\la \vD E(\bs V(\lambda) + \bs u^*), \bs g\ra| \lesssim n(\bs g, \lambda)^2$,
    hence from Lemma~\ref{lem:loi-gamma} we have
    \begin{equation}
      \label{eq:b1-bound}
    |b_1(t)| \lesssim \sup_{t \leq \tau < T_+} n(\bs g(t), \lambda(t))^2.
  \end{equation}
  Let $t \in [t_0, T_+)$ and let $t_1 \in [t, T_+)$ be such that $\varphi_\tx{M}(t) = \varphi(t_1)$
    (such $t_1$ exists by the definition of $\varphi_\tx{M}$).
    Using \eqref{eq:b1-bound} we obtain
    \begin{equation*}
      \varphi_\tx{M}(t) = \varphi(t_1) \lesssim \sup_{t_1 \leq \tau < T_+} n(\bs g(\tau), \lambda(\tau))^2 \leq \sup_{t \leq \tau < T_+} n(\bs g(\tau), \lambda(\tau))^2.
    \end{equation*}

    Now let $t_2 \in [t, T_+)$ be such that $\sup_{t \leq \tau < T_+}n(\bs g(\tau), \lambda(\tau))^2 = n(\bs g(t_2), \lambda(t_2))^2$.
      From Lemma~\ref{lem:coer} and the fact that $\|\bs V(\lambda) + \bs u^* - \bs W_\lambda\|_\cE$ is small we obtain
  \begin{equation}
    \label{eq:loi-phi-3}
    \frac{1}{2}\la \vD^2 E(\bs V(\lambda(t_2)) + \bs u^*(t_2))\bs g(t_2), \bs g(t_2)\ra + 2\big(a^-(t_2)^2 + a^+(t_2)^2\big) \gtrsim \|\bs g(t_2)\|_\cE^2.
  \end{equation}
  From Lemma~\ref{lem:loi-coer}, if we choose $C_\tx{I}$ large enough, then $C_\tx{I}c^*\lambda^\frac{N-2}{2} + E(\bs V(\lambda) + \bs u^*) - E(\bs W) - E(\bs u^*) \gtrsim c^*\lambda^\frac{N-2}{2}$,
  hence \eqref{eq:loi-phi-2} and \eqref{eq:loi-phi-3} yield
  \begin{equation*}
    \varphi(t_2) - b_2(t_2) \gtrsim n(\bs g(t_2), \lambda(t_2))^2.
  \end{equation*}
  From Lemma~\ref{lem:loi-gamma} we have $|b_2(t_2)| \leq \sqrt{c_0}\cdot \sup_{t_2\leq \tau < T_+} n(\bs g(\tau), \lambda(\tau))^2 = \sqrt{c_0}\cdot n(\bs g(t_2), \lambda(t_2))^2$,
  hence we obtain
  \begin{equation*}
    \varphi_\tx{M}(t) \geq \varphi(t_2) \gtrsim n(\bs g(t_2), \lambda(t_2))^2 = \sup_{t\leq \tau < T_+} n(\bs g(\tau), \lambda(\tau))^2,
  \end{equation*}
  provided that $c_0$ is small enough. 
\end{proof}

\subsection{Differential inequalities and conclusion}
\label{ssec:loi-proof}
\begin{lemma}
  \label{lem:loi-a}
  There exists a constant $C_a$ such that for $T_+ - t$ small enough there holds
    \begin{equation*}
      |a^+(t)| \leq C_a \cdot \sup_{t \leq \tau < T_+}n(\bs g(\tau), \lambda(\tau))^2,\qquad |a^-(t)| \leq C_a \cdot \sup_{t\leq \tau < T_+}n(\bs g(\tau), \lambda(\tau))^2.
    \end{equation*}
\end{lemma}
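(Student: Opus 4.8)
The plan is to extract both bounds from the differential inequalities of Lemma~\ref{lem:amp}, treating the unstable coefficient $a^+$ and the stable coefficient $a^-$ by different ODE arguments. Throughout I write $M(t):=\sup_{t\le\tau<T_+}n(\bs g(\tau),\lambda(\tau))^2$; since $\|\bs g(t)\|_\cE\to 0$ and $\lambda(t)\to 0$, the function $M$ is continuous, nonincreasing, and $M(t)\to 0$ as $t\to T_+$. I also use the rescaled time $\Theta(t):=\int_{t_0}^t\nu\lambda(\tau)^{-1}\ud\tau$. Two preliminary observations: first, $\Theta(t)\to+\infty$ as $t\to T_+$, because $|\lambda'|\lesssim\|\bs g\|\ll 1$ together with $\lambda\to 0$ forces $\lambda(t)\le T_+-t$ near $T_+$, so $\int^{T_+}\lambda^{-1}$ diverges; second, $\dd t\log\bigl(\lambda^{(N-2)/2}e^{\Theta}\bigr)=\frac{N-2}{2}\frac{\lambda'}{\lambda}+\frac{\nu}{\lambda}\ge 0$ near $T_+$ (again because $|\lambda'/\lambda|\ll\lambda^{-1}$), hence $e^{-\Theta(t)}\lesssim\lambda(t)^{(N-2)/2}\lesssim M(t)/c^*$.

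For $a^+$ I would use the standard contradiction argument. Suppose $a^+(t_1)>C_aM(t_1)$ at some time $t_1$ (the case $a^+(t_1)<-C_aM(t_1)$ is symmetric). By \eqref{eq:loi-alphap}, wherever $a^+\ge C_aM$ one has $\dd t a^+\ge\frac{\nu}{\lambda}a^+-\frac{C}{\lambda}M\ge\bigl(\nu-\frac{C}{C_a}\bigr)\frac{a^+}{\lambda}\ge\frac{\nu}{2\lambda}a^+>0$ once $C_a\ge 2C/\nu$. Hence on the maximal interval $[t_1,\tau^*)$ on which $a^+>C_aM$ the function $a^+$ is increasing; as $M$ is nonincreasing, this gives $a^+(\tau^*)\ge a^+(t_1)>C_aM(t_1)\ge C_aM(\tau^*)$ with strict inequality, so by continuity $\tau^*=T_+$. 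But then $(\log a^+)'\ge\frac{\nu}{2\lambda}$ on $[t_1,T_+)$, so $a^+(t)\ge a^+(t_1)e^{(\Theta(t)-\Theta(t_1))/2}\to+\infty$, contradicting $|a^+|\lesssim\|\bs g\|\to 0$. Therefore $|a^+(t)|\le C_aM(t)$.

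For $a^-$, set $r(t):=\dd t a^-(t)+\frac{\nu}{\lambda(t)}a^-(t)$, so $|r|\le C\lambda^{-1}n^2$ by \eqref{eq:loi-alpham}, where $n(s)^2:=n(\bs g(s),\lambda(s))^2$. Solving with the integrating factor $e^{\Theta}$, $a^-(t)=e^{-\Theta(t)}a^-(t_0)+\int_{t_0}^t e^{-(\Theta(t)-\Theta(s))}r(s)\ud s$; the first term is $\lesssim e^{-\Theta(t)}\lesssim M(t)$. For the second, use $e^{-(\Theta(t)-\Theta(s))}\frac{\nu}{\lambda(s)}=\dd s e^{-(\Theta(t)-\Theta(s))}$ and integrate by parts: it is bounded, up to the constant $C/\nu$, by $n(t)^2+e^{-\Theta(t)}n(t_0)^2+\int_{t_0}^t e^{-(\Theta(t)-\Theta(s))}\bigl|\dd s n(s)^2\bigr|\ud s$, the first two summands being $\lesssim M(t)$. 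So everything comes down to the bound $\bigl|\dd s n(s)^2\bigr|\le\varepsilon\lambda(s)^{-1}n(s)^2$ with $\varepsilon$ small: the last integral is then $\le\varepsilon\nu^{-1}$ times the very Duhamel quantity being estimated, hence absorbable, and $|a^-(t)|\lesssim M(t)$ follows.

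The hard part is thus $\bigl|\dd t n(t)^2\bigr|\ll\lambda(t)^{-1}n(t)^2$. The $c^*\lambda^{(N-2)/2}$ contribution is harmless since $\bigl|\dd t\lambda^{(N-2)/2}\bigr|\lesssim\lambda^{(N-4)/2}|\lambda'|\lesssim\|\bs g\|\lambda^{-1}\lambda^{(N-2)/2}\ll\lambda^{-1}n^2$. The delicate one is $\dd t\|\bs g\|_\cE^2$: plugging in \eqref{eq:loi-dtg-2}, the ``free wave'' part cancels after integrating by parts, but the nonlinearity leaves a term of the form $\int f'(V(\lambda)+u^*)\,g\dot g\,\ud x$, which a crude estimate bounds only by $\lambda^{-1}\|\bs g\|_\cE^2$ with a \emph{fixed} constant. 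The way I would circumvent this is to differentiate the coercive functional $\la\vD^2E(\bs V(\lambda)+\bs u^*)\bs g,\bs g\ra=\|\bs g\|_\cE^2-\int f'(V(\lambda)+u^*)g^2\ud x$ rather than $\|\bs g\|_\cE^2$ itself: since $\dot g=\partial_t g+\lambda'\partial_\lambda V(\lambda)$, the offending term $\int f'(V(\lambda)+u^*)g\dot g\,\ud x$ equals, up to terms carrying an extra factor $\|\bs g\|$ or $|\lambda'|$ (and hence $\ll\lambda^{-1}n^2$), the total time derivative $\frac12\dd t\int f'(V(\lambda)+u^*)g^2\ud x$. One then reruns the Duhamel/absorption step of the previous paragraph with $n^2$ replaced by a coercive quantity $\sim n^2$ assembled from $\la\vD^2E(\bs V(\lambda)+\bs u^*)\bs g,\bs g\ra$, $c^*\lambda^{(N-2)/2}$, and the eigendirection correction of Lemma~\ref{lem:coer}, this quantity now having time derivative $\ll\lambda^{-1}n^2$. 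Making this accounting rigorous — in particular controlling $\partial_t\bigl(f'(V(\lambda)+u^*)\bigr)$, the cut-off error of $V(\lambda)$, the motion of $\dot u^*$, and the eigendirection correction, together with the regularity needed to differentiate $\|\bs g\|_\cE^2$ (handled by the regularization procedure of Lemma~\ref{lem:loi-mod}) — is the main obstacle.
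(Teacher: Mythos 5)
Your treatment of $a^+$ is essentially the paper's: a forward bootstrap showing that if $|a^+|$ ever dominates $\sup_{t\le\tau<T_+}n(\bs g(\tau),\lambda(\tau))^2$ it keeps doing so up to $T_+$, and then $\int_t^{T_+}\lambda^{-1}\ud\tau=+\infty$ forces $|a^+|\to+\infty$, a contradiction.

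For $a^-$ there is a genuine gap, precisely at the point you flag as ``the main obstacle.'' Your Duhamel/integration-by-parts scheme stands or falls with the bound $\bigl|\dd t\,(\text{coercive quantity}\sim n^2)\bigr|\ll\lambda^{-1}n^2$, which you do not establish, and the plan you sketch cannot establish it. Two concrete points. First, the term you single out, $\int f'(V(\lambda)+u^*)\,g\dot g\ud x$, is actually harmless: H\"older with the scale-invariant exponent $\|f'(V(\lambda)+u^*)\|_{L^{N/2}}=O(1)$ gives $O(\|\bs g\|_\cE^2)$, which is already $\ll\lambda^{-1}n^2$. The term that genuinely resists a small-constant improvement in $\dd t\|\bs g\|_\cE^2$ (and equally in $\dd t\la\vD^2 E(\bs V(\lambda)+\bs u^*)\bs g,\bs g\ra$) is $-2\lambda'\la\grad g,\grad\partial_\lambda V(\lambda)\ra$: since $\|\partial_\lambda V(\lambda)\|_{\dot H^1}\sim\lambda^{-1}$ and $|\lambda'|\lesssim\|\bs g\|_\cE$, this is $O(\lambda^{-1}\|\bs g\|_\cE^2)$ with a \emph{fixed} constant, and switching from $\|\bs g\|_\cE^2$ to the quadratic form does nothing to cancel it. Second, and more importantly, the paper sidesteps the whole issue by never differentiating the quadratic form at all: energy conservation \eqref{eq:loi-energy} together with the decomposition of Lemma~\ref{lem:loi-gamma} expresses $\frac12\la\vD^2E\bs g,\bs g\ra$ (up to the absorbable errors $b_2$ and $O(\|\bs g\|_\cE^3)$) through the interaction energy and $b_1$, so that the quantity
\begin{equation*}
\varphi(t)=C_\tx{I}c^*\lambda(t)^{\frac{N-2}{2}}-b_1(t)+2\bigl(a^-(t)^2+a^+(t)^2\bigr)
\end{equation*}
is comparable to $\sup n^2$ (Corollary~\ref{cor:loi-coercivity}) and has $|\varphi'(t)|\ll\lambda(t)^{-1}n(\bs g(t),\lambda(t))^2$ \emph{for free}, since $\lambda^{(N-2)/2}$, $b_1$ and $a^\pm$ each satisfy the needed ODE estimates (\eqref{eq:loi-mod}, \eqref{eq:b1p}, Lemma~\ref{lem:amp}). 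The paper then compares $|a^-|$ to the monotone envelope $\varphi_\tx{M}$ rather than running your Duhamel step, but that is a cosmetic difference; the algebraic substitution via energy conservation is the essential ingredient your proposal lacks, and without it neither route closes.
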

\begin{proof}
  It follows from \eqref{eq:loi-alphap} that there exists $C_1 > 0$ such that
  \begin{equation}
    \label{eq:loi-ap-destab}
    |a^+(t)| \geq C_1 \cdot n(\bs g(t), \lambda(t))^2 \quad\Rightarrow\quad \dd t|a^+(t)| \geq \frac{\nu}{2\lambda(t)}|a^+(t)|.
  \end{equation}
  Suppose that
  $$|a^+(t)| \geq 2C_1 \cdot \sup_{t \leq \tau < T_+}n(\bs g(\tau), \lambda(\tau))^2$$
  and suppose that $t_1 \in [t, T_+)$ is the smallest time such that
    $$|a^+(t_1)| \leq C_1 \cdot \sup_{t_1 \leq \tau < T_+}n(\bs g(\tau), \lambda(\tau))^2.$$
    Clearly $t_1 > t$. The function on the right hand side is decreasing with respect to $t_1$, hence $\dd t|a^+(t)|_{t = t_1} \leq 0$.
    This contradicts \eqref{eq:loi-ap-destab}, hence for all $t' \in [t, T_+)$ we have
      \begin{equation}
        \label{eq:loi-ap-destab-2}
        |a^+(t')| \geq C_1 \cdot n(\bs g(t'), \lambda(t'))^2.
      \end{equation}
      Observe that
      \begin{equation}
      \label{eq:temps-renorm}
      \int_{t}^{T_+}\frac{1}{\lambda(\tau)}\ud \tau \gtrsim \int_t^{T_+}\frac{|\lambda'(\tau)|}{\lambda(\tau)}\ud\tau = +\infty.
    \end{equation}
    From \eqref{eq:loi-ap-destab-2}, \eqref{eq:loi-ap-destab} and \eqref{eq:temps-renorm} we obtain
    $|a^+(t)| \to +\infty$ as $t \to T_+$, a contradiction.

      We will now consider $a^-(t)$, which is less straightforward.
        It follows from \eqref{eq:loi-alphap} that there exists $C_2 > 0$ such that
  \begin{equation}
    \label{eq:loi-am-stab}
    |a^-(t)| \geq C_2 \cdot n(\bs g(t), \lambda(t))^2 \quad\Rightarrow\quad \dd t|a^-(t)| \leq -\frac{\nu}{2\lambda(t)}|a^-(t)|.
  \end{equation}
  From Corollary~\ref{cor:loi-coercivity} we obtain existence of a constant $C_3 > 0$ such that
  \begin{equation}
    \label{eq:C3}
    |a^-(t)| \geq C_3 \cdot \varphi_\tx{M}(t)\quad\Rightarrow\quad |a^-(t)| \geq C_2 \cdot n(\bs g(t), \lambda(t))^2
  \end{equation}
  and a constant $C_4 > 0$ such that
  \begin{equation*}
    |a^-(t)| \geq C_4 \cdot \sup_{t\leq \tau < T_+}n(\bs g(t), \lambda(t))^2\quad\Rightarrow \quad |a^-(t)| \geq 2C_3 \cdot \varphi_\tx{M}(t).
  \end{equation*}
  Suppose that $t \in [t_0, T_+)$ is such that
    \begin{equation}
      \label{eq:loi-am-seq}
    |a^-(t)| \geq C_4 \cdot \sup_{t \leq \tau < T_+}n(\bs g(\tau), \lambda(\tau))^2
  \end{equation}
  and let $t_1 \in [t_0, t]$ be the smallest time such that for $t' \in [t_1, t]$ there holds
      \begin{equation}
        \label{eq:loi-am-past}
        |a^-(t')| \geq C_3 \cdot \varphi_\tx{M}(t').
      \end{equation}
  Of course $t_1 < t$. Suppose that $t_1 > t_0$. This implies
  \begin{equation*}
    -\frac{C_2\nu}{\lambda(t_1)}n(\bs g(t_1), \lambda(t_1))^2 \geq -\frac{\nu}{2\lambda(t_1)}|a^-(t_1)| \geq \dd t|a^-(t)|_{t = t_1} \geq C_3 \cdot \varphi_\tx{M}'(t_1)
  \end{equation*}
  (we use respectively \eqref{eq:C3}, \eqref{eq:loi-am-stab} and the definition of $t_1$).

  However, $|\varphi_\tx{M}'(t_1)| \leq |\varphi'(t_1)| \ll \frac{1}{\lambda(t_1)}n(\bs g(t_1), \lambda(t_1))^2$, as is easily seen from \eqref{eq:loi-phi-def}.
  The contradiction shows that $t_1 = t_0$, hence \eqref{eq:loi-am-past} holds for $t' \in [t_0, t]$.
  This means that if there exist times $t$ arbitrarily close to $T_+$ such that \eqref{eq:loi-am-seq} holds, then \eqref{eq:loi-am-past} is true for $t' \in [t_0, T_+)$.
    From \eqref{eq:loi-am-stab} and \eqref{eq:C3} we deduce that for $t \in [t_0, T_+)$ there holds
      \begin{equation*}
        |a^-(t)| \leq |a^-(t_0)|\cdot \exp\Big(-\int_{t_0}^t\frac{\nu\ud t}{2\lambda(t)}\Big).
      \end{equation*}
      By \eqref{eq:C3} and \eqref{eq:loi-mod}, this implies
      \begin{equation*}
        |\lambda'(t)| \lesssim \exp\Big(-\int_{t_0}^t\frac{\nu\ud t}{4\lambda(t)}\Big).
      \end{equation*}
      Dividing both sides by $\lambda(t)$ and integrating we get a contradiction.

  We have proved the lemma with $C_a := \max(2C_1, C_4)$.
\end{proof}
By modifying $t_0$ we can assume that Lemma~\ref{lem:loi-a} holds for $t \in [t_0, T_+)$.
\begin{proof}[Proof of Theorem~\ref{thm:loi}]
  We define
  \begin{equation*}
    \wt \varphi(t) := C_\tx{I}c^*\lambda(t)^\frac{N-2}{2} - b_1(t),\qquad \wt \varphi_\tx{M}(t) := \sup_{t \leq \tau < T_+} \wt \varphi(\tau).
  \end{equation*}
  From Lemma~\ref{lem:loi-a} and Corrolary~\ref{cor:loi-coercivity}, it is clear that
  \begin{equation}
    \label{eq:loi-equiv}
    \wt \varphi_\tx{M}(t) \sim \sup_{t \leq \tau < T_+}n(\bs g(\tau), \lambda(\tau))^2.
  \end{equation}
  We will consider first the case $N \in \{4, 5\}$.
  Using \eqref{eq:b1p} and \eqref{eq:loi-equiv} we obtain the following differential inequality for $t \in [t_0, T_+)$:
    \begin{equation}
      \label{eq:main-diff-ineq}
      |\wt \varphi_\tx{M}'(t)| \leq |\wt \varphi'(t)| \lesssim c^* \lambda(t)^\frac{N-4}{2}\|\bs g(t)\|_\cE \lesssim (c^*)^\frac {2}{N-2}\wt \varphi_\tx{M}(t)^\frac{3N-10}{2(N-2)}.
    \end{equation}
    Integrating this inequality we find
    \begin{equation*}
      \wt \varphi_\tx{M}(t) \lesssim (c^*)^\frac{4}{6-N}(T_+ - t)^\frac{2(N-2)}{6-N}.
    \end{equation*}
    To finish the proof, recall that $c^*\lambda(t)^\frac {N-2}{2} \lesssim \wt \varphi_\tx{M}(t)$ by Corollary~\ref{cor:loi-coercivity}.

    Consider now the case $N = 3$. The problem is that $N - 4 < 0$, hence we cannot write
    $(c^*\lambda(t))^\frac{N-4}{2} \lesssim \wt \varphi_\tx{M}^\frac{N-4}{2(N-2)}$, as we did in the previous proof.
    Instead, we just have
    \begin{equation*}
      |\wt \varphi_\tx{M}'(t)| \lesssim c^*\lambda(t)^{-\frac 12}\cdot \sqrt{\wt \varphi_\tx{M}(t)}.
    \end{equation*}
    Integrating between $t$ and $T_+$ we obtain
    \begin{equation*}
      \sqrt[4]{\lambda(t)} \lesssim \sqrt{c^*}\int_t^{T_+}\frac{\ud \tau}{\sqrt{\lambda(\tau)}}.
    \end{equation*}
    This is again a differential inequality. It yields \eqref{eq:moyenne}.
\end{proof}
\begin{remark}
  \label{rem:N3-cont}
  In the case $N = 3$ and $\bs u^* \in X^2 \times H^2$, we can prove \eqref{eq:loi-borne} for continuous time, not only for a sequence.
  Indeed, in this case one can take $b_1(t) = 0$ (see Remark~\ref{rem:loi-gamma}), hence $\wt \varphi_\tx{M}(t) = C_\tx{I}c^*\sqrt{\lambda(t)}$.
  If $t \in [t_0, T_+)$ is such that $\lambda(t) < \sup_{t \leq \tau < T_+}\lambda(\tau)$, then obviously $\wt \varphi_\tx{M}'(t) = 0$.
    If $\lambda(t) = \sup_{t \leq \tau < T_+} \lambda(\tau)$, then $c^*\sqrt{\lambda(t)} \sim \wt \varphi_\tx{M}(t)$,
    hence the proof of \eqref{eq:main-diff-ineq} applies. The end of the proof is the same as in the case $N \in \{4, 5\}$.
  \end{remark}

\begin{proof}[Proof of Theorem~\ref{thm:neg}]
  Let $t \in [t_0, T_+)$ be such that $n(\bs g(t), \lambda(t)) = \sup_{t \leq \tau < T_+} n(\bs g(\tau), \lambda(\tau))$.
    From \eqref{eq:inter-positive} and Lemma~\ref{lem:coer} we get
  \begin{equation*}
    \begin{aligned}
      \big(E(\bs V(\lambda) + \bs u^*)-E(\bs W) - E(\bs u^*)\big) + \frac{1}{2}\la \vD^2 E(\bs V(\lambda) + \bs u^*)\bs g, \bs g\ra
      +2((a^-)^2 + (a^+)^2) \gtrsim n(\bs g, \lambda)^2.
    \end{aligned}
  \end{equation*}
  But due to Lemma~\ref{lem:loi-a}, the last term on the right hand side can be omitted.
  This is in contradiction with \eqref{eq:loi-energy} and Lemma~\ref{lem:loi-gamma-reg}.
\end{proof}

\appendix
\section{Cauchy theory in higher regularity}
\label{sec:cauchy}
In this section we prove some facts about propagation of regularity for \eqref{eq:nlw}, which are applied to $\bs u^*(t)$ in the main text.
As in \cite[Appendix B]{moi15p}, the proofs rely on the classical \emph{energy estimates}:
\begin{proposition*}
  Let $s \geq 0$, $t_0 \in [T_1, T_2]$, $g \in L^1(I, H^s)$ and $\bs u_0 \in X^s \times H^s$.
  Then the solution of the linear wave equation $(\partial_{tt} - \Delta) u = g$ with initial data $\bs u(t_0) = \bs u_0$ satifies
  \begin{equation*}
    \|\bs u(t)\|_{X^s \times H^s} \leq \|\bs u_0\|_{X^s \times H^s} + \Big|\int_{t_0}^t\|g(\tau)\|_{H^s}\ud\tau\Big|,\qquad \forall t\in[T_1, T_2].
  \end{equation*}\qed
\end{proposition*}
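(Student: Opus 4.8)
The plan is to establish a weighted energy identity for the linear wave equation and then combine it at the two regularity levels $\sigma = 0$ and $\sigma = s$; throughout, the norms on the intersection spaces $X^s$, $H^s$ and on the products are taken in the obvious way, up to fixed equivalence constants. Write $u$ for the solution of $(\partial_{tt} - \Delta)u = g$ with $\bs u(t_0) = \bs u_0$, and for $\sigma \geq 0$ set $E_\sigma(t) := \|u(t)\|_{\dot H^{\sigma+1}}^2 + \|\dot u(t)\|_{\dot H^\sigma}^2$, so that $\|\bs u(t)\|_{X^s \times H^s}^2 = E_0(t) + E_s(t)$ and $\|g(\tau)\|_{H^s}^2 = \|g(\tau)\|_{L^2}^2 + \|g(\tau)\|_{\dot H^s}^2$. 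First I would prove, for each $\sigma$, the identity $\dd{t}E_\sigma(t) = 2\la \dot u(t), g(t)\ra_{\dot H^\sigma}$: differentiating $E_\sigma$ and substituting $\ddot u = \Delta u + g$, the two contributions $\la u, \dot u\ra_{\dot H^{\sigma+1}}$ and $\la \dot u, \Delta u\ra_{\dot H^\sigma}$ cancel after an integration by parts, leaving only the forcing term. Since $\la \dot u, g\ra_{\dot H^\sigma} \leq \|\dot u\|_{\dot H^\sigma}\|g\|_{\dot H^\sigma} \leq \sqrt{E_\sigma}\,\|g\|_{\dot H^\sigma}$, this gives $\dd{t}\sqrt{E_\sigma} \leq \|g\|_{\dot H^\sigma}$ almost everywhere.

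Next I would add the two levels. From $\dd{t}(E_0 + E_s) = 2\big(\la \dot u, g\ra_{L^2} + \la \dot u, g\ra_{\dot H^s}\big) \leq 2\sqrt{E_0 + E_s}\,\|g\|_{H^s}$, obtained by Cauchy--Schwarz in $\bR^2$, it follows that $M := \sqrt{E_0 + E_s}$ satisfies $M'(t) \leq \|g(t)\|_{H^s}$, and integrating from $t_0$ to $t$ yields $M(t) \leq M(t_0) + \big|\int_{t_0}^t \|g(\tau)\|_{H^s}\ud\tau\big|$, which is the assertion. An equivalent route, which pins the constant to exactly $1$, is to integrate the two homogeneous estimates separately and recombine them by applying Minkowski's inequality to the $\ell^2$-valued map $\tau \mapsto \big(\|g(\tau)\|_{L^2}, \|g(\tau)\|_{\dot H^s}\big)$; this is the only place where the precise normalization of the norms enters.

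The one genuine point requiring care is rigour at low regularity: for $\bs u_0 \in X^s \times H^s$ and $g \in L^1(I, H^s)$ the differentiations above are only formal, and I expect the cleanest remedy is to run the whole argument on the Fourier side, where the solution is given by Duhamel's formula. For the vector $v(t, \xi) := \big(|\xi|\wh u(t, \xi), \wh{\dot u}(t, \xi)\big)$ it reads $v(t) = \cR\big(|\xi|(t - t_0)\big) v(t_0) + \int_{t_0}^t \cR\big(|\xi|(t - \tau)\big)\big(0, \wh g(\tau, \xi)\big)\ud\tau$, with $\cR(\theta)$ the rotation of $\bR^2$ by angle $\theta$; since $\cR(\theta)$ is an isometry one obtains the pointwise-in-$\xi$ bound $|v(t, \xi)| \leq |v(t_0, \xi)| + \big|\int_{t_0}^t |\wh g(\tau, \xi)|\ud\tau\big|$ with no regularity assumed, and multiplying by $|\xi|^\sigma$ and applying Minkowski in $L^2_\xi$ gives the $\sigma$-level estimate directly, since $\||\xi|^\sigma v(t)\|_{L^2_\xi} = \sqrt{E_\sigma(t)}$ and $\||\xi|^\sigma \wh g(\tau)\|_{L^2_\xi} = \|g(\tau)\|_{\dot H^\sigma}$. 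Alternatively, one proves the estimate first for Schwartz data, where every step is classical, and then passes to the limit, the a priori bound itself providing the needed stability. I do not anticipate any obstacle beyond this bookkeeping.
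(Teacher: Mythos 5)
The paper does not actually prove this statement: it is presented as a classical fact (the \texttt{\textbackslash qed} follows the statement immediately, and the appendix explicitly says the Cauchy theory ``relies on the classical energy estimates''). So there is no proof in the paper to compare against; what I can do is assess whether your argument is a correct proof, and it is. Your energy identity $\dd{t}E_\sigma = 2\la\dot u, g\ra_{\dot H^\sigma}$ is the standard one, the recombination of the $\sigma = 0$ and $\sigma = s$ levels via Cauchy--Schwarz in $\bR^2$ (or, equivalently, via Minkowski applied to the $\ell^2$-valued map $\tau \mapsto (\|g(\tau)\|_{L^2}, \|g(\tau)\|_{\dot H^s})$) is exactly what is needed to match the norms $X^s = \dot H^{s+1} \cap \dot H^1$ and $H^s = L^2 \cap \dot H^s$ used in the paper, and the Fourier-side Duhamel argument with the rotation matrix is a clean way to make the differentiations rigorous at the stated regularity. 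In fact the Fourier route can be streamlined slightly: multiplying the pointwise bound $|v(t,\xi)| \leq |v(t_0,\xi)| + \big|\int_{t_0}^t |\wh g(\tau,\xi)|\ud\tau\big|$ by the single weight $w(\xi) := (1 + |\xi|^{2s})^{1/2}$ and applying Minkowski in $L^2_\xi$ gives the full $X^s \times H^s$ estimate with constant exactly $1$ in one stroke, since $\|w\,v(t)\|_{L^2_\xi} = \|\bs u(t)\|_{X^s \times H^s}$ and $\|w\,\wh g(\tau)\|_{L^2_\xi} = \|g(\tau)\|_{H^s}$, avoiding the separate two-level bookkeeping entirely.
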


\begin{proposition}
  \label{prop:cauchy-N34}
  Let $N \in \{3, 4\}$, $s > \frac{N-2}{2}$ and $\bs u_0 \in X^s \times H^s$. There exist $t_1 < t_0 < t_2$ such that the solution $\bs u(t)$ of \eqref{eq:nlw} satisfies
  \begin{equation*}
    \bs u \in C([t_1, t_2], X^s \times H^s).
  \end{equation*}
\end{proposition}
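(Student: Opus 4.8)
The nonlinearity here is a \emph{polynomial}: for $N\in\{3,4\}$ one has $\frac{4}{N-2}\in\{4,2\}$, so $f(u)=|u|^{4/(N-2)}u$ equals $u^5$ (if $N=3$) or $u^3$ (if $N=4$), and $f,f'$ are smooth with honest multilinear expressions to which fractional Leibniz rules apply; this is exactly why the statement is restricted to these dimensions. The other structural input is that the hypothesis $s>\frac{N-2}{2}$ is equivalent to $s+1>\frac N2$, hence $\dot H^{s+1}\hookrightarrow L^\infty$, while $\dot H^1\hookrightarrow L^{2N/(N-2)}$. The plan is then a contraction argument on a short interval $[t_1,t_2]\ni t_0$ in the complete metric space $C([t_1,t_2],X^s\times H^s)$, using the stated energy estimate as the linear input; since the topology $X^s\times H^s$ is subcritical for \eqref{eq:nlw}, the length of the interval will depend only on $\|\bs u_0\|_{X^s\times H^s}$.

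Concretely, I would run the Picard iteration $\bs u_{k+1}=$ the solution of $(\partial_{tt}-\Delta)u_{k+1}=f(u_k)$ with $\bs u_{k+1}(t_0)=\bs u_0$, inside the ball $B=\{\bs v:\ \sup_t\|\bs v(t)\|_{X^s\times H^s}\le 2\|\bs u_0\|_{X^s\times H^s}\}$. The energy estimate gives
\[
\sup_{t\in[t_1,t_2]}\|\bs u_{k+1}(t)\|_{X^s\times H^s}\le\|\bs u_0\|_{X^s\times H^s}+\int_{t_1}^{t_2}\|f(u_k(\tau))\|_{H^s}\ud\tau,
\]
and applied to $u_{k+1}-u_k$ it bounds the difference by $\int_{t_1}^{t_2}\|f(u_k(\tau))-f(u_{k-1}(\tau))\|_{H^s}\ud\tau$. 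So the whole problem reduces to the nonlinear estimates
\[
\|f(v)\|_{H^s}\lesssim P\big(\|\bs v\|_{X^s\times H^s}\big),\qquad \|f(v)-f(w)\|_{H^s}\lesssim P\big(\|\bs v\|_{X^s\times H^s},\|\bs w\|_{X^s\times H^s}\big)\,\|\bs v-\bs w\|_{X^s\times H^s}
\]
with $P$ a polynomial vanishing at $0$. Granting these, $\int_{t_1}^{t_2}\|f(u_k)\|_{H^s}\ud\tau\lesssim(t_2-t_1)\,P(2\|\bs u_0\|_{X^s\times H^s})$, which is $\le\|\bs u_0\|_{X^s\times H^s}$ and makes the iteration a self-map of $B$ and a contraction as soon as $t_2-t_1$ is small enough (depending only on $\|\bs u_0\|_{X^s\times H^s}$); the fixed point lies in $C([t_1,t_2],X^s\times H^s)$ by construction, and, being also in $C([t_1,t_2],\cE)$, it coincides with the solution of \eqref{eq:nlw} by the uniqueness part of the $\cE$-theory.

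For the nonlinear estimate, write $v\in\dot H^{s+1}\cap\dot H^1\hookrightarrow L^\infty\cap L^{2N/(N-2)}$, so $v\in L^q$ for every $q\in[\tfrac{2N}{N-2},\infty]$; interpolating the exponent gives $\|f(v)\|_{L^2}=\|v\|_{L^{2(\frac{4}{N-2}+1)}}^{\frac{4}{N-2}+1}\lesssim P(\|\bs v\|_{X^s\times H^s})$. For the homogeneous part, when $s\ge1$ one interpolates $\dot H^s$ between $\dot H^1$ and $\dot H^{s+1}$ and applies the fractional Leibniz rule to the product $v\cdots v$ to obtain $\|f(v)\|_{\dot H^s}\lesssim\|v\|_{L^\infty}^{\frac{4}{N-2}}\|v\|_{\dot H^s}$; when $\frac{N-2}{2}<s<1$ (possible only for $N=3$) this fails because $\dot H^s$ is not an interpolation space between $\dot H^1$ and $\dot H^{s+1}$, so instead one picks $p_2$ slightly larger than $2$ with $\dot H^{s+1}\hookrightarrow\dot W^{s,p_2}$ and $p_1$ defined by $\frac12=\frac{4}{N-2}\cdot\frac1{p_1}+\frac1{p_2}$ (then $p_1$ is large and finite, and $v\in L^{p_1}$), and applies the fractional Leibniz rule in the $\dot W^{s,p_2}$/$L^{p_1}$ scale. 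The difference estimates follow by the same manipulations applied to $f(v)-f(w)=(v-w)\int_0^1 f'(w+\theta(v-w))\ud\theta$, using that $f'$ is again a polynomial.

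I expect the only real work to be this nonlinear bound, and specifically two points: first, the \emph{inhomogeneous} norm $H^s$ required by the energy estimate must be reconciled with the fact that $v$ is controlled only in $\dot H^{s+1}\cap\dot H^1$ (there is no $L^2$ bound), so all the $L^2$-type control of $f(v)$ has to be extracted by Sobolev embedding of the two homogeneous norms rather than read off directly; second, the range $\frac{N-2}{2}<s<1$ in dimension $N=3$, where one cannot interpolate $\dot H^s$ and must work with $\dot W^{s,p}$ spaces, is the delicate case. Everything else — choosing the iterate $\bs u_0\mapsto$ free evolution as the starting point, verifying $f(u_k)\in L^1([t_1,t_2],H^s)$ so that the energy estimate applies, the contraction, and continuity in time — is routine once the nonlinear estimate is in hand.
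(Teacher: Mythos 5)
Your argument is the same as the paper's: the paper simply states that the proposition follows from ``a standard application of the energy estimates and the Fixed Point Theorem, using the fact that $f(u)$ is a monomial and $X^s \hookrightarrow L^\infty$,'' and skips the details, which you have supplied. Your contraction scheme, the reconciliation of the inhomogeneous $H^s$ norm with the $\dot H^{s+1}\cap\dot H^1$ control via Sobolev embedding, and the use of the fractional Leibniz/chain rule all match the paper's intent (and its explicit treatment of the harder $N=5$ case in Lemma~\ref{lem:cauchy-nonlin}).

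One small slip in the $N=3$, $\frac 12<s<1$ case: you propose $p_2$ ``slightly larger than $2$'' with $\dot H^{s+1}\hookrightarrow\dot W^{s,p_2}$. That homogeneous embedding only holds at the scaling-critical exponent $p_2=\frac{2N}{N-2}=6$; for $p_2<6$ it is false. What you can use is $X^s=\dot H^{s+1}\cap\dot H^1\hookrightarrow\dot W^{s,p_2}$, which holds for $p_2\in\big[\frac{2N}{N-2(1-s)},\,\frac{2N}{N-2}\big]$ (because $|\grad|^s v\in\dot H^{1-s}\cap\dot H^1$), i.e.\ $p_2\in[\frac{6}{1+2s},6]$ when $N=3$. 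In particular ``slightly larger than $2$'' fails when $s$ is near $\frac12$; the cleanest choice is simply $p_2=6$, which gives $p_1=12$ and $v\in L^{12}\subset L^6\cap L^\infty$, and the estimate $\||\grad|^s(v^5)\|_{L^2}\lesssim\|v\|_{L^{12}}^4\,\||\grad|^s v\|_{L^6}$ closes as you intended. With that correction the argument is complete.
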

\begin{proof}
  This is a standard application of the energy estimates and the Fixed Point Theorem, using the fact that $f(u)$ is a monomial and $X^s \hookrightarrow L^\infty$.
  We skip the details.
%
\end{proof}

  In the rest of this section we consider \eqref{eq:nlw} in dimension $N = 5$. In this case the nonlinearity $f(u) = |u|^\frac 43 u$ is not smooth.
We will use the following regularization: $$f_n(u) := \big(1-\chi(nu)\big)f(u),\qquad n \in \{1, 2, 3,\ldots\},$$
where
$$
\chi \in C^\infty,\quad\chi(-u) = \chi(u),\quad \chi(u) = 1\text{ for }u\in[-1,1],\quad \supp \chi \subset [-2,2].
$$

In the proof of the next result we will use the Fractional Leibniz Rule and the Fractional Chain Rule in the form given in \cite[Propositions 3.1, 3.3]{ChWe91}:
  \begin{proposition}
    \label{prop:chain-rule}
    ~
    \begin{itemize}
      \item
        If $\Psi \in C^1$, $0 < \alpha < 1$ and $1 < p, p_1, p_2$ are such that $\frac{1}{p} = \frac{1}{p_1} + \frac{1}{p_2}$, then
        \begin{equation*}
          \||\grad|^\alpha \Psi(u)\|_{L^p} \lesssim \|\Psi'(u)\|_{L^{p_1}}\cdot \||\grad|^\alpha u\|_{L^{p_2}}.
        \end{equation*}
      \item
        If $0 < \alpha < 1$ and $1 < p, p_1, p_2, \wt p_1, \wt p_2$ are such that $\frac 1p = \frac{1}{p_1} + \frac{1}{p_2} = \frac{1}{\wt p_1} + \frac{1}{\wt p_2}$, then
        \begin{equation*}
          \||\grad|^\alpha(uv)\|_{L^p} \lesssim \||\grad|^\alpha u\|_{L^{p_1}}\cdot \|v\|_{L^{p_2}} + \|u\|_{L^{\wt p_1}}\cdot \||\grad|^\alpha v\|_{L^{\wt p_2}}.
        \end{equation*}
    \end{itemize} \qed
  \end{proposition}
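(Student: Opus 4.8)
Both estimates are classical; in \cite{ChWe91} they are obtained by Littlewood--Paley theory, and the plan is to recall that argument. Fix a homogeneous Littlewood--Paley family $\{P_j\}_{j\in\bZ}$, with $P_j$ localizing to frequencies $|\xi|\sim 2^j$ and $P_{<j}:=\sum_{k<j}P_k$. I will use two standard facts: the square-function characterization
\[
  \||\grad|^\alpha h\|_{L^q}\sim\Big\|\Big(\sum_j 2^{2j\alpha}|P_j h|^2\Big)^{1/2}\Big\|_{L^q},\qquad 1<q<\infty,
\]
and the pointwise bound $\sup_j|P_{<j}h|\lesssim\cM h$, where $\cM$ is the Hardy--Littlewood maximal operator, bounded on $L^q$ for every $1<q<\infty$. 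The hypotheses $p,p_1,p_2,\wt p_1,\wt p_2>1$ are exactly what makes these tools available.

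For the Leibniz rule (second item) I would use Bony's paraproduct splitting
\[
  uv=\sum_j(P_ju)(P_{<j-2}v)+\sum_j(P_{<j-2}u)(P_jv)+\sum_{|j-k|\le 2}(P_ju)(P_kv)=:\mathrm I+\mathrm{II}+\mathrm{III}.
\]
In $\mathrm I$ the $j$-th summand has frequency $\sim 2^j$, so by the square-function characterization
\[
  \||\grad|^\alpha\mathrm I\|_{L^p}\lesssim\Big\|\Big(\sum_j 2^{2j\alpha}|P_ju|^2\,|P_{<j-2}v|^2\Big)^{1/2}\Big\|_{L^p}\le\Big\|\cM v\cdot\Big(\sum_j 2^{2j\alpha}|P_ju|^2\Big)^{1/2}\Big\|_{L^p},
\]
and H\"older with exponents $(p_1,p_2)$ gives $\||\grad|^\alpha u\|_{L^{p_1}}\|v\|_{L^{p_2}}$. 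The term $\mathrm{II}$ is symmetric and produces $\|u\|_{L^{\wt p_1}}\||\grad|^\alpha v\|_{L^{\wt p_2}}$. For the high--high term $\mathrm{III}$, whose summands are localized only to $|\xi|\lesssim 2^j$, I would write $|\grad|^\alpha\mathrm{III}=\sum_m P_m|\grad|^\alpha\mathrm{III}$, note that $P_m$ annihilates the $j$-th summand unless $j\ge m-c$, and use $\|P_m|\grad|^\alpha[(P_ju)(P_kv)]\|_{L^p}\lesssim 2^{m\alpha}\|(P_ju)(P_kv)\|_{L^p}$; factoring out $2^{(m-j)\alpha}$ leaves a series over $j\ge m-c$ that converges precisely because $\alpha>0$, after which one is back in the situation above and may attach the resulting bound to either of the two factors.

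For the chain rule (first item) I would instead invoke the difference (Strichartz) characterization: for $0<\alpha<1$ and $1<p<\infty$,
\[
  \||\grad|^\alpha h\|_{L^p}\sim\Big\|\Big(\int_{\bR^N}\frac{|h(\cdot)-h(\cdot-y)|^2}{|y|^{N+2\alpha}}\,dy\Big)^{1/2}\Big\|_{L^p}.
\]
Applying this to $h=\Psi(u)$ and using the pointwise mean-value bound $|\Psi(a)-\Psi(b)|\le|a-b|\int_0^1|\Psi'(b+t(a-b))|\,dt$, one reduces to controlling the average of $|\Psi'|$ along the segment from $u(x-y)$ to $u(x)$ by $\cM(|\Psi'(u)|)(x)$, up to a factor absorbed into the corresponding difference square function of $u$; this is carried out by a stopping-time decomposition of the $y$-integral into a region where $u$ oscillates little near $x$ and its complement. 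H\"older with exponents $(p_1,p_2)$ together with $\cM:L^{p_2}\to L^{p_2}$ then gives $\|\Psi'(u)\|_{L^{p_1}}\||\grad|^\alpha u\|_{L^{p_2}}$.

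The two delicate points are (a) the high--high term $\mathrm{III}$ in the Leibniz rule, the only place the restriction $\alpha>0$ enters and which must be redistributed carefully over the two factors, and (b) in the chain rule, the passage from the segment average of $|\Psi'|$ to a maximal function of $|\Psi'(u)|$, which is subtle because $\Psi'$ is merely continuous. As these estimates are standard and lie off the main line of the present paper, in the end I would simply refer to \cite{ChWe91} for the complete proofs.
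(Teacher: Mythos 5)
The paper gives no proof of this proposition; it is stated as a direct citation of \cite[Propositions 3.1, 3.3]{ChWe91} and marked $\qed$, so your plan of sketching the paraproduct and difference-characterization arguments and then ultimately deferring to the same reference matches the paper's approach, and the sketch itself is sound. The one point your proposal does not touch on, which the paper addresses in the remark immediately following the proposition, is that \cite{ChWe91} proves these inequalities in one space dimension, and their use here for radial functions in dimension $5$ rests on the observation that the radial case reduces to the one-dimensional result.
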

\begin{remark}
  \label{rem:chain-rule}
  In \cite{ChWe91}, the Leibniz Rule and the Chain Rule are proved in the case of one space dimension,
  and necessary changes in order to carry out a proof in arbitrary dimension are indicated.
  In the present paper we use this result in dimension 5, but only for radial functions,
  and it can be verified that the Leibniz Rule and the Chain Rule for radial functions is a consequence of the one-dimensional result.
\end{remark}

\begin{lemma}
  \label{lem:cauchy-nonlin}
  Let $N = 5$ and $1 \leq s \leq 2$. The following estimates hold (with constants which may depend on $s$):
  \begin{align}
    \|f(u) - f_n(u)\|_{H^1} &\leq c_n\big(1+f(\|u\|_{X^1})\big),\qquad \text{with}\ c_n \to 0\ \text{as}\ n \to +\infty, \label{eq:f-fn_X1} \\
    \|f(u) - f(v)\|_{H^1} &\lesssim \|u-v\|_{X^1}\cdot \big(f'(\|u\|_{X^1}) + f'(\|v\|_{X^1})\big), \label{eq:f_u-v_X1} \\
    \|f_n(u) - f_n(v)\|_{H^1} &\lesssim \|u-v\|_{X^1}\cdot \big(f'(\|u\|_{X^1}) + f'(\|v\|_{X^1})\big), \label{eq:fn_u-v_X1} \\
    \|f(u)\|_{H^s} &\lesssim f(\|u\|_{X^s}), \label{eq:f_u_Xs} \\
    \|f_n(u)\|_{H^s} &\lesssim f(\|u\|_{X^s}), \label{eq:fn_u_Xs} \\
    \|f_n(u) - f_n(v)\|_{H^s} &\leq C_n\|u - v\|_{X^s}\cdot\big(1 + f'(\|u\|_{X^s}) + f'(\|v\|_{X^s})\big),\qquad C_n > 0, \label{eq:fn_u-v_Xs}
  \end{align}
  where the sign $\lesssim$ means that the constant is independent of $n$.
\end{lemma}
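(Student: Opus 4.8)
The plan is to treat all six estimates with the same machinery: split every $H^s$ norm as $\|\cdot\|_{L^2}+\|\cdot\|_{\dot H^s}$, dispose of the $L^2$ part by Hölder's inequality and the Sobolev embeddings of $\bR^5$ ($\dot H^1\hookrightarrow L^{10/3}$, $\dot H^2\hookrightarrow L^{10}\cap \dot W^{1,10/3}$, so that $X^1=\dot H^1\cap\dot H^2$ controls $L^q$ for all $q\in[10/3,10]$, and more generally $\dot H^{s+1}\cap\dot H^1$ controls $\||\grad|^\sigma u\|_{L^q}$ in the corresponding Sobolev range), and handle the $\dot H^s$ part by the Fractional Leibniz and Chain Rules of Proposition~\ref{prop:chain-rule}. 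Note $f'\in C^1(\bR)$ (since $f''(u)=\tfrac{28}{9}|u|^{1/3}\sign u$ is continuous) and $f_n\in C^\infty(\bR)$. Before anything else I would record the elementary pointwise bounds on $f_n=(1-\chi(n\cdot))f$: since $\supp\chi\subset[-2,2]$ and $1/n\le|u|\le 2/n$ on $\supp\chi'(n\cdot)$, one gets $|f_n(u)|\le|f(u)|$, $|f_n'(u)|\lesssim|u|^{4/3}$ and $|f_n''(u)|\lesssim|u|^{1/3}$ with $n$-independent constants, hence also $|f_n'(a)-f_n'(b)|\lesssim(|a|^{1/3}+|b|^{1/3})|a-b|$ exactly as for $f'$; and since $f_n$ vanishes near $0$, equals $|u|^{7/3}$ for $|u|\ge 2/n$, and $\tfrac73-3<0$, the third derivative satisfies $\|f_n'''\|_{L^\infty}\le C_n$. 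These bounds let me run the arguments for $f$ and $f_n$ in parallel, the only genuinely $n$-dependent constant — the one in \eqref{eq:fn_u-v_Xs} — entering through $\|f_n'''\|_{L^\infty}$.

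For the $L^2$ contributions and for \eqref{eq:f-fn_X1} I would use the pointwise inequalities $|f(u)|=|u|^{7/3}$, $|f(u)-f(v)|\lesssim(|u|^{4/3}+|v|^{4/3})|u-v|$ (and likewise for $f_n$, via the first step) together with $\|u\|_{L^{14/3}}+\|u\|_{L^{20/3}}\lesssim\|u\|_{X^1}$. For \eqref{eq:f-fn_X1} the point is that $f(u)-f_n(u)=\chi(nu)f(u)$ and $\grad(f(u)-f_n(u))=\big(n\chi'(nu)f(u)+\chi(nu)f'(u)\big)\grad u$ are supported in $\{|u|\le 2/n\}$, where the factors multiplying $\grad u$ are $\lesssim|u|^{4/3}$; extracting a gain $|u|^{\varepsilon}\le(2/n)^{\varepsilon}$ for a small $\varepsilon>0$ while keeping all $L^q$ exponents of $u$ inside the admissible Sobolev range produces a prefactor $c_n=(2/n)^{\varepsilon/2}\to0$ times $\|u\|_{X^1}^{7/3-\varepsilon/2}\le 1+f(\|u\|_{X^1})$, which is exactly the stated form (the ``$1+$'' is there precisely to absorb the non-integer power).

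For the $\dot H^s$ parts I would distinguish $s=1$ and $1<s\le2$. When $s=1$ no fractional calculus is needed: write $\grad(f(u)-f(v))=f'(u)\grad(u-v)+(f'(u)-f'(v))\grad v$ and estimate each term by Hölder, using $\|f'(u)\|_{L^5}\lesssim\|u\|_{X^1}^{4/3}$, $\|\grad u\|_{L^{10/3}}\lesssim\|u\|_{\dot H^2}$, and $|f'(u)-f'(v)|\lesssim(|u|^{1/3}+|v|^{1/3})|u-v|$; a homogeneity argument turns $\|u\|_{X^1}^{1/3}\|v\|_{X^1}$ into $f'(\|u\|_{X^1})+f'(\|v\|_{X^1})$. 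This yields \eqref{eq:f_u-v_X1} and \eqref{eq:fn_u-v_X1} (and the $s=1$ cases of \eqref{eq:f_u_Xs}, \eqref{eq:fn_u_Xs} with $v=0$). When $1<s\le2$, from $\grad f(u)=f'(u)\grad u$ we have $\|f(u)\|_{\dot H^s}=\|f'(u)\grad u\|_{\dot H^{s-1}}$ with $s-1\in(0,1)$; I would apply the Fractional Leibniz Rule to split $|\grad|^{s-1}$ between $f'(u)$ and $\grad u$, then the Fractional Chain Rule to $\Psi=f'\in C^1$ to bound $\||\grad|^{s-1}f'(u)\|_{L^5}\lesssim\|f''(u)\|_{L^{10}}\||\grad|^{s-1}u\|_{L^{10}}$, and check that with the exponent triples built on $L^5,L^{10/3},L^{10}$ everything is controlled by $\|u\|_{X^s}^{7/3}=f(\|u\|_{X^s})$; by the first step the same scheme gives the $n$-independent \eqref{eq:fn_u_Xs}. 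For \eqref{eq:fn_u-v_Xs}, the only estimate allowed an $n$-dependent constant, I would write $f_n(u)-f_n(v)=(u-v)\int_0^1 f_n'(v+\sigma(u-v))\ud\sigma$, apply the Fractional Leibniz Rule to this product, and then Fractional Leibniz/Chain once more to $|\grad|^s$ of the integrand — now hitting $\Psi=f_n''\in C^1$, so that only the $L^\infty$-bounded $f_n'''$ appears, which is the source of $C_n$.

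The main obstacle is the exponent bookkeeping in the regime $1<s\le2$: in $\bR^5$ the space $X^s=\dot H^{s+1}\cap\dot H^1$ fails to embed into $L^\infty$ when $s$ is close to $1$, so one must choose the Hölder triples in the Fractional Leibniz and Chain Rules so that \emph{every} $L^p$-norm of $u$ or of a derivative of $u$ that appears lies strictly inside the relevant Sobolev range, in particular never the endpoint $L^\infty$. Once one consistent choice is fixed (e.g.\ the one indicated above for $\|f(u)\|_{\dot H^s}$), the rest is a routine combination of Hölder's inequality, Sobolev embeddings, interpolation, and the pointwise bounds of the first step.
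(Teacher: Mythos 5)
Your proposal follows essentially the same strategy as the paper: identical pointwise bounds on $f_n$, the same split into $L^2$ and $\dot H^s$ parts, the same use of the Fractional Leibniz and Chain Rules with the exponent triple built on $L^5, L^{10/3}, L^{10}$, the same integral representation $f_n(u)-f_n(v)=-(v-u)\int_0^1 f_n'((1-t)u+tv)\ud t$ for \eqref{eq:fn_u-v_Xs}, and the same observation that only the bound on $f_n'''$ introduces an $n$-dependence. The two places you deviate are minor and both correct: for \eqref{eq:f-fn_X1} you localize to $\{|u|\le 2/n\}$ and extract a small power $|u|^\varepsilon\le(2/n)^\varepsilon$, whereas the paper invokes $f_n\to f$ in $C^2$ for the gradient term and interpolates between $L^\infty$ and $L^{10/7}$ for the $L^2$ term; and for \eqref{eq:f_u-v_X1} you give a direct Hölder argument (with Young's inequality converting $\|u\|^{1/3}\|v\|$ into $\|u\|^{4/3}+\|v\|^{4/3}$), whereas the paper simply cites \cite[Lemma B.3]{moi15p}.
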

\begin{proof}
  A simple computation shows that
  \begin{gather}
    |f_n(u)| \leq |f(u)|,\quad |f_n'(u)| \lesssim |f'(u)|,\quad |f_n''(u)| \lesssim |f''(u)|, \label{eq:fn-deriv}\\
    f_n \to f\quad \text{in }C^2(\bR), \label{eq:fn-conv}\\
    |f_n'''(u)| \lesssim n^\frac 23\label{eq:fn-3deriv}.
  \end{gather}
  We have $$\|\grad(f(u) - f_n(u))\|_{L^2} = \|(f'(u) - f_n'(u))\grad u\|_{L^2} \leq \|f' - f_n'\|_{L^\infty}\cdot \|u\|_{H^1},$$
  which is acceptable due to \eqref{eq:fn-conv}.

  In order to bound $\|f(u) - f_n(u)\|_{L^2}$, we interpolate between $\|f-f_n\|_{L^\infty}$ and
  $$
  \|f(u) - f_n(u)\|_{L^\frac{10}{7}} \lesssim f(\|u\|_{L^\frac{10}{3}}) \lesssim f(\|u\|_{H^1}).
  $$
  This proves \eqref{eq:f-fn_X1}.

  Estimate \eqref{eq:f_u-v_X1} is a part of \cite[Lemma B.3]{moi15p} and the proof of \eqref{eq:fn_u-v_X1} is analogous.

  From the Sobolev inequality we get $\|f_n(u)\|_{L^2} \leq \|f(u)\|_{L^2} \leq f(\|u\|_{L^\frac{14}{3}}) \lesssim f(\|u\|_{X^s})$,
  hence in order to prove \eqref{eq:f_u_Xs} and \eqref{eq:fn_u_Xs} it suffices to check that
  \begin{equation}
    \label{eq:energy-deriv-1}
    \||\grad|^s(f(u))\|_{L^2} \lesssim f(\|u\|_{X^s}),\qquad  \||\grad|^s(f_n(u))\|_{L^2} \lesssim f(\|u\|_{X^s}).
  \end{equation}
  For $s \in \{1, 2\}$ this is an easy algebraic computation which we will skip. For $1 < s < 2$ we use Proposition~\ref{prop:chain-rule}:
  \begin{equation}
    \label{eq:energy-deriv-2}
    \begin{aligned}
      \||\grad|^s(f(u))\|_{L^2} &= \||\grad|^{s-1}\grad(f(u))\|_{L^2} = \||\grad|^{s-1}(f'(u)\grad u)\|_{L^2} \\
      &\lesssim\||\grad|^{s-1}\grad u\|_{L^\frac{10}{3}}\cdot \|f'(u)\|_{L^5} + \||\grad|^{s-1}(f'(u))\|_{L^5}\cdot \|\grad u\|_{L^\frac{10}{3}} \\
      &\lesssim \||\grad|^{s-1}\grad u\|_{H^1}\cdot f'(\|u\|_{L^\frac{20}{3}}) + \|f''(u)\|_{L^{10}}\cdot \||\grad|^{s-1}u\|_{L^{10}}\cdot \|\grad u\|_{H^1}  \\ &\lesssim f(\|u\|_{X^s}).
    \end{aligned}
  \end{equation}
  The second inequality in \eqref{eq:energy-deriv-2} is proved analogously.

  In order to prove \eqref{eq:fn_u-v_Xs} it suffices to check that
  \begin{equation}
    \label{eq:energy-deriv-3}
    \||\grad|^s(f_n(u) - f_n(v))\|_{L^2} \leq C_n\|u - v\|_{X^s}\cdot\big(1 + f'(\|u\|_{X^s}) + f'(\|v\|_{X^s})\big)
  \end{equation}
  (the estimate of $\|f_n(u) - f_n(v)\|_{L^2}$ is a part of \eqref{eq:fn_u-v_X1}).
  We write $$f_n(u) - f_n(v) = -(v-u)\int_0^1f_n'((1-t)u + tv)\ud t,$$ hence by the triangle inequality
  \begin{equation*}
    \||\grad|^s(f_n(u) - f_n(v))\|_{L^2} \leq \int_0^1 \big\||\grad|^s\big((u-v)f_n'((1-t)u + tv)\big)\big\|_{L^2}\ud t.
  \end{equation*}
  We will estimate the integrand for fixed $t \in [0, 1]$. We have
  \begin{equation*}
    \begin{aligned}
    \||\grad|^s\big((u-v)f_n'((1-t)u + tv)\big)\big\|_{L^2} &= \||\grad|^{s-1}\grad\big((u-v)f_n'((1-t)u + tv)\big)\big\|_{L^2} \\
    &= \big\||\grad|^{s-1}\big(\grad(u-v)\cdot f_n'((1-t)u + tv)\big)\big\|_{L^2}  \\&+ \big\||\grad|^{s-1}\big((u-v)\cdot ((1-t)\grad u + t\grad v)\cdot f_n''((1-t)u + tv)\big)\big\|_{L^2}.
  \end{aligned}
  \end{equation*}
  The first term is estimated exactly as in \eqref{eq:energy-deriv-2}, so we will only consider the second one.
  From the Leibniz Rule we obtain
  \begin{equation*}
    \begin{aligned}
    &\big\||\grad|^{s-1}\big((u-v)\cdot ((1-t)\grad u + t\grad v)\cdot f_n''((1-t)u + tv)\big)\big\|_{L^2} \\
    \lesssim &\||\grad|^{s-1}(u-v)\|_{L^{10}}\cdot \|(1-t)u + tv\|_{L^\frac{10}{3}}\cdot \|f_n''((1-t)u + tv)\|_{L^{10}} \\
    + &\|u - v\|_{L^{10}}\cdot \||\grad|^{s-1}((1-t)\grad u + t\grad v)\|_{L^\frac{10}{3}}\cdot \|f_n''((1-t)u + tv)\|_{L^{10}} \\
    + &\|u - v\|_{L^{p_1}}\cdot \|(1-t)\grad u + t\grad v\|_{L^{p_2}}\cdot \||\grad|^{s-1}f_n''((1-t)u + tv)\|_{L^{p_3}},
  \end{aligned}
  \end{equation*}
  where the exponents $p_1$, $p_2, p_3 \in (1, +\infty)$ are chosen such that $p_1 > 10$, $p_2 > \frac{10}{3}$, $p_3 < 10$, $X^s \subset L^{p_1} \cap W^{1, p_2}$ and
  $\frac 12 = \frac{1}{p_1} + \frac{1}{p_2} + \frac{1}{p_3}$.
  Estimating the first two lines is straightforward and for the last line we use the Chain Rule together with \eqref{eq:fn-3deriv}.
\end{proof}

\begin{proposition}
  \label{prop:cauchy-N5}
  Let $N = 5$, $1 \leq s \leq 2$ and $\bs u_0 \in X^s \times H^s$. There exist $t_1 < t_0 < t_2$ such that the solution $\bs u(t)$ of \eqref{eq:nlw} satisfies
  \begin{equation*}
    \bs u \in C([t_1, t_2], X^s \times H^s).
  \end{equation*}
\end{proposition}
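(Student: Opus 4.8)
The plan is to adapt the proof of Proposition~\ref{prop:cauchy-N34}, using the regularization $f_n$ to circumvent the non-smoothness of $f(u)=|u|^{4/3}u$ and adding one extra limiting step to remove the regularization at the end. The main obstacle will be to make the time of existence of the regularized solutions \emph{uniform in $n$}, and then to upgrade the resulting uniform bound to genuine continuity in $X^s\times H^s$.

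\emph{Regularized problems and uniform time of existence.} For each $n$ I would solve $(\partial_{tt}-\Delta)u_n=f_n(u_n)$ with data $\bs u_n(t_0)=\bs u_0$. Since $f_n\in C^3$ with all derivatives controlled, estimates \eqref{eq:fn_u_Xs} and \eqref{eq:fn_u-v_Xs} make $u\mapsto f_n(u)$ a locally Lipschitz map $X^s\to H^s$, with a Lipschitz constant that may depend on $n$; combined with the energy estimate for the linear wave equation stated at the start of this appendix and the contraction mapping principle in a ball of $C([t_0-\tau_n,t_0+\tau_n],X^s\times H^s)$, this yields a unique solution $\bs u_n$, a priori on an interval whose length $\tau_n$ may depend on $n$. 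The decisive point is that the constant in \eqref{eq:fn_u_Xs} is independent of $n$: as long as $\bs u_n$ exists, the energy estimate gives
\[
  \|\bs u_n(t)\|_{X^s\times H^s}\le \|\bs u_0\|_{X^s\times H^s}+C\Big|\int_{t_0}^t f\big(\|\bs u_n(\tau)\|_{X^s\times H^s}\big)\ud\tau\Big|,
\]
with $C$ and the scalar superlinear power $f$ independent of $n$. Comparing with the corresponding scalar ODE and using the standard blow-up criterion of the local theory (the $X^s\times H^s$ norm must become unbounded at the end of a maximal interval of existence), one obtains $t_1<t_0<t_2$ and $M$, depending only on $\|\bs u_0\|_{X^s\times H^s}$, such that each $\bs u_n$ extends to $[t_1,t_2]$ with $\sup_{t\in[t_1,t_2]}\|\bs u_n(t)\|_{X^s\times H^s}\le M$.

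\emph{Removing the regularization.} On $[t_1,t_2]$ I would decompose, for $m,n$,
\[
  f_n(u_n)-f_m(u_m)=\big(f_n(u_n)-f_n(u_m)\big)+\big(f_n(u_m)-f(u_m)\big)+\big(f(u_m)-f_m(u_m)\big),
\]
where the first term is $\lesssim\|u_n-u_m\|_{X^1}$ with constant controlled by $M$ (by \eqref{eq:fn_u-v_X1}), and the last two tend to $0$ in $H^1$ as $\min(m,n)\to\infty$ (by \eqref{eq:f-fn_X1}), uniformly in $t$. The energy estimate at the level of $X^1\times H^1$ together with Gronwall's inequality then show that $(\bs u_n)$ is Cauchy in $C([t_1,t_2],X^1\times H^1)$, hence in $C([t_1,t_2],\cE)$. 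Passing to the limit in Duhamel's formula (using $f_n\to f$ in $C^1$ and the uniform bounds of the previous step) identifies the limit with the $\cE$-solution $\bs u$ of \eqref{eq:nlw} by uniqueness, and the uniform bound passes to the weak-$*$ limit, so $\bs u\in L^\infty([t_1,t_2],X^s\times H^s)\cap C_w([t_1,t_2],X^s\times H^s)$ with $\|\bs u(t)\|_{X^s\times H^s}\le M$.

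\emph{Continuity in the top regularity.} To conclude I would re-run the construction above with base time $t'$ and data $\bs u(t')\in X^s\times H^s$; by uniqueness in $\cE$ this gives, for $t$ near $t'$, the bound $\|\bs u(t)\|_{X^s\times H^s}\le\|\bs u(t')\|_{X^s\times H^s}+O(|t-t'|)$, hence $\limsup_{t\to t'}\|\bs u(t)\|_{X^s\times H^s}\le\|\bs u(t')\|_{X^s\times H^s}$. Combined with the weak lower semicontinuity of the norm this makes $t\mapsto\|\bs u(t)\|_{X^s\times H^s}$ continuous, and in the Hilbert space $X^s\times H^s$ weak continuity plus norm continuity yield strong continuity, i.e.\ $\bs u\in C([t_1,t_2],X^s\times H^s)$. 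As in the proof of Proposition~\ref{prop:cauchy-N34}, the routine parts of the fixed-point and energy-estimate bookkeeping would be omitted; the only genuinely new ingredients are the $n$-independent estimate \eqref{eq:fn_u_Xs} (which gives the uniform lifespan) and the last semicontinuity argument, which is the most delicate step.
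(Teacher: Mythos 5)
Your proposal is correct and follows essentially the same approach as the paper: regularize the nonlinearity to $f_n$, exploit the $n$-uniform estimate \eqref{eq:fn_u_Xs} together with the linear energy estimate to get a uniform-in-$n$ lifespan and a uniform $X^s\times H^s$ bound for $\bs u_n$, pass to the limit in the weaker $X^1\times H^1$ topology via \eqref{eq:f-fn_X1}, \eqref{eq:fn_u-v_X1} and Gronwall, and then upgrade the resulting $L^\infty$-in-time $X^s\times H^s$ bound to continuity. The only (cosmetic) differences are that the paper constructs the $X^1\times H^1$ solution first and then proves $\bs u_n\to\bs u$ rather than showing $(\bs u_n)$ is Cauchy, and that for the final step the paper appeals to a direct energy-estimate argument while you use the standard weak-continuity-plus-upper-semicontinuity trick in the Hilbert space $X^s\times H^s$; both variants are valid.
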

\begin{proof}
  Using \eqref{eq:f_u_Xs} for $s = 1$ and \eqref{eq:f_u-v_X1} one obtains by a standard procedure that there exists a unique maximal solution
  $$
  \bs u \in C([T_1, T_2], X^1 \times H^1),\qquad T_1 < t_0 < T_2
  $$
  and
  $$
  T_{1} > -\infty\ \Rightarrow\ \lim_{t\to T_{1}}\|\bs u_n\|_{X^1 \times H^1} = +\infty, \qquad T_{2} <+\infty\ \Rightarrow\ \lim_{t\to T_{2}}\|\bs u_n\|_{X^1 \times H^1} = +\infty,
  $$
  see \cite[Proposition B.2]{moi15p} for details.

  Consider the regularized problem for $n \in \{1, 2, 3, \ldots\}$:
\begin{equation}
  \label{eq:nlw-reg}
\bigg\{
  \begin{aligned}
    (\partial_{tt} -\Delta)u_n  &= f_n(u_n), \\
    (u_n(t_0), \partial_t u_n(t_0)) &= \bs u_0.
\end{aligned}
\end{equation}
  Using \eqref{eq:fn_u_Xs} and \eqref{eq:fn_u-v_Xs} one can show that there exists a unique maximal solution
  $$
  \bs u_n \in C([T_{1, n}, T_{2, n}], X^s \times H^s),\qquad T_{1, n} < t_0 < T_{2, n}
  $$
  and
  \begin{equation}
    \label{eq:alternative-Hs}
  T_{1, n} > -\infty\ \Rightarrow\ \lim_{t\to T_{1, n}}\|\bs u_n\|_{X^s \times H^s} = +\infty, \qquad T_{2, n} <+\infty\ \Rightarrow\ \lim_{t\to T_{2, n}}\|\bs u_n\|_{X^s \times H^s} = +\infty.
\end{equation}
From \eqref{eq:fn_u_Xs} and the energy estimate we have
\begin{equation*}
  \|\bs u_n(t)\|_{X^s \times H^s} \lesssim \|\bs u_0\|_{X^s \times H^s} + \Big|\int_{t_0}^t f(\|\bs u(\tau)\|_{X^s \times H^s})\ud \tau\Big|,
\end{equation*}
with a constant independent of $n$. This implies that there exist $\wt T_1 < t_0$, $\wt T_2 > t_0$ and a constant $C_1$ independent of $n$ such that
\begin{equation}
  \label{eq:un-bound}
  \|\bs u_n(t)\|_{X^s\times H^s} \leq C_1\qquad \forall n,\ \forall t\in[\wt T_1, \wt T_2]
\end{equation}
(in particular $\wt T_1 \geq \sup_n T_{1, n}$ and $\wt T_2 \leq \inf_n T_{2, n}$).

Now we need to verify that
\begin{equation}
  \label{eq:un-conv}
  \lim_{n\to +\infty}\|\bs u_n(t) - \bs u(t)\|_{X^1 \times H^1} = 0\qquad \forall t \in [\wt T_1, \wt T_2].
\end{equation}
To this end, we notice that $\bs u_n - \bs u$ solves the Cauchy problem:
\begin{equation}
  \label{eq:nlw-un-u}
\bigg\{
  \begin{aligned}
    (\partial_{tt} -\Delta)(u_n-u)  &= f_n(u_n)-f(u), \\
    (u_n(t_0), \partial_t u_n(t_0)) &= 0.
\end{aligned}
\end{equation}
Since $\|\bs u(t)\|_{X^1\times H^1}$ is bounded and $\|\bs u_n(t)\|_{X^1\times H^1}$ are uniformly bounded for $t \in [\wt T_1, \wt T_2]$, \eqref{eq:f-fn_X1} and \eqref{eq:fn_u-v_X1}
imply that for $t\in [\wt T_1, \wt T_2]$ there holds
$$
\|f_n(u_n(t)) - f(u(t))\|_{H^1} \leq \|f_n(u_n(t)) - f_n(u(t))\|_{H^1} + \|f_n(u(t)) - f(u(t))\|_{H^1} \lesssim \|u_n(t) - u(t)\|_{X^1} + c_n,
$$
which yields \eqref{eq:un-conv} by the energy estimate and the Gronwall inequality.

From \eqref{eq:un-bound} and \eqref{eq:un-conv} we deduce
\begin{equation*}
  \|\bs u(t)\|_{X^s \times H^s} \leq C_1,\qquad \forall t\in [\wt T_1, \wt T_2].
\end{equation*}
The function $\bs u: [\wt T_1, \wt T_2] \to X^s \times H^s$ is weakly measurable (since it is measurable as a function to $X^1 \times H^1$), hence it is measurable
and $\bs u\in L^\infty([\wt T_1, \wt T_2], X^s \times H^s)$. Using once again the energy estimate together with \eqref{eq:f_u_Xs} it is easy to see that in fact
$\bs u \in C([\wt T_1, \wt T_2], X^s \times H^s)$.

\end{proof}
\bibliographystyle{plain}
\bibliography{une-bulle}

\end{document}